\documentclass[a4paper,11pt,reqno]{amsart}
\usepackage[left=3cm,right=3cm,top=3cm,bottom=3cm]{geometry}
\usepackage[utf8]{inputenc}
\usepackage[english]{babel}
\usepackage{amsmath}
\usepackage{float}
\usepackage{amssymb}
\usepackage{amsfonts}
\usepackage{mathrsfs}
\usepackage{mathtools}
\usepackage{latexsym}
\usepackage{url}
\usepackage{amsthm}
\usepackage[ruled]{algorithm2e}
\usepackage{hyperref}
\usepackage{enumitem}
\usepackage[most]{tcolorbox}
\usepackage{caption}
\usepackage{subcaption}
\usepackage[section]{placeins}

\usepackage{svg}
\usepackage{tikz-cd} 

 
\theoremstyle{plain}
\newtheorem{theorem}{Theorem}[section]

\newtheorem{defn}[theorem]{Definition}
\newtheorem{cor}[theorem]{Corollary}
\newtheorem{prop}[theorem]{Proposition}

\newtheorem{coro}[theorem]{Corollary}

\theoremstyle{remark}
\newtheorem{remark}[theorem]{Remark}



\NewEnviron{solidbox}{
    \begin{center}
    \par
    \begin{tikzpicture}
    \node[rectangle,minimum width=0.85\textwidth] (m) {
        \begin{minipage}{0.8\textwidth}\BODY\end{minipage}
    };
    \draw (m.south west) rectangle (m.north east);
    \end{tikzpicture}
    \end{center}
}

\NewEnviron{dashedbox}{
    \begin{center}
    \par
    \begin{tikzpicture}
    \node[rectangle,minimum width=0.85\textwidth] (m) {
        \begin{minipage}{0.8\textwidth}\BODY\end{minipage}
    };
    \draw[dashed] (m.south west) rectangle (m.north east);
    \end{tikzpicture}
    \end{center}
}



\DeclarePairedDelimiterX\braket[2]{\langle}{\rangle}{#1 , #2}

\newcommand{\nnorm}[1]{
    {\left\vert\kern-0.25ex\left\vert\kern-0.25ex\left\vert #1
    \right\vert\kern-0.25ex\right\vert\kern-0.25ex\right\vert}
}

\newcommand{\defgr}{\mathrel{\mathop:\!\!=}}




\usepackage{cleveref}

\SetKwComment{Comment}{/* }{ */}


\begin{document}
\title[Numerical Invariant Ruelle Distributions]{
	Invariant Ruelle Distributions on Convex-Cocompact Hyperbolic Surfaces\\
	--\\
	A Numerical Algorithm via Weighted Zeta Functions
} 
\author{Philipp Schütte}
\email{pschuet2@math.uni-paderborn.de}
\address{Institute of Mathematics, Paderborn University, Paderborn, Germany}
\author{Tobias Weich}
\email{weich@math.uni-paderborn.de}
\address{Institute of Mathematic, Paderborn University, Paderborn, Germany}
\address{Institute for Photonic Quantum Systems, Paderborn University, Paderborn, Germany}

\keywords{
	Ruelle resonances, invariant Ruelle distributions, zeta functions, hyperbolic dynamics, Schottky surfaces, numerical zeta functions.
}

\begin{abstract}
	We present a numerical algorithm for the computation of invariant Ruelle distributions on convex co-compact hyperbolic surfaces.
	This is achieved by exploiting the connection between invariant Ruelle distributions and residues of meromorphically continued weighted zeta functions established by the authors together with Barkhofen~(2021).
	To make this applicable for numerics we express the weighted zeta as the logarithmic derivative of a suitable parameter dependent Fredholm determinant similar to Borthwick~(2014).
	As an additional difficulty our transfer operator has to include a contracting direction which we account for with techniques developed by Rugh~(1992).
	We achieve a further improvement in convergence speed for our algorithm in the case of surfaces with
	additional symmetries by proving and applying a symmetry reduction of weighted zeta functions.\\
	
	\noindent \textbf{Mathematical Subject Classification.} 37D05, 37C30 (Primary), 58J50 (Secondary).
\end{abstract}

\maketitle


\setcounter{tocdepth}{2}


\section*{Introduction}\label{intro}

An important notion that has significantly advanced the theory of chaotic, i.e.~hyperbolic, dynamical systems over the last couple of decades is that of
\emph{Pollicott-Ruelle resonances}~\cite{Rue76,Pol81,Liverani.2002,Dyatlov.2019}.
They constitute a discrete subset of the complex plane that provides a spectral invariant refining the ordinary $\mathrm{L}^2$-spectrum of the generator of the dynamics.
From a dynamical systems point of view the central relevance of Pollicott-Ruelle resonances is that they describe the mixing properties of the dynamical system.
Roughly speaking, if there is a simple leading resonance and a spectral gap then the system is exponentially mixing and the gap quantifies the exponential decay rate of the correlation function.
Furthermore if there is an asymptotic spectral gap the other resonances describe additional decay modes~\cite{Tsujii.2010,Zworski.2015}.

Apart from their dynamical importance for mixing properties the distribution of Pollicott-Ruelle resonances as well as the multiplicities of certain Pollicott-Ruelle resonances have proven to be intimately linked to geometrical \cite{CDDP22, FT23} and topological properties \cite{DZ17, KW20, DGRS20} of the underlying manifolds, respectively.
For sufficiently concrete hyperbolic flows such as geodesic flows on Schottky surfaces~\cite{Borthwick.2014, Weich.2016, Pohl.2020} or 3-disc obstacle scattering~\cite{GR89cl,Schuette.2021a,Schuette.2021b,Schuette.2022a} there are even efficient numerical algorithms that allow to calculate the spectrum of Pollicott-Ruelle resonances numerically.
These algorithms do not only enable testing of conjectures (see e.g.~\cite{Dya19}) but the numerical experiments regarding resonances also made possible the discovery of new and unexpected phenomena such as the alignment of resonance into chains subsequently leading to new mathematical theorems~\cite{Wei15, PV19}.

Beyond Pollicott-Ruelle resonances themselves the spectral approach also allows to associate with each Pollicott-Ruelle resonance a flow invariant distribution which has been termed \emph{invariant Ruelle distribution}~\cite{Weich.2021} (see Section~\ref{prelim} for a definition in our setting).
The invariant Ruelle distribution associated with the leading resonance is always an invariant measure which coincides with the Sinai-Ruelle-Bowen (SRB) measure for Anosov flows on compact manifolds and with the Bowen-Margulis-Sullivan measure (following the terminology of Roblin~\cite{Rob03}) for geodesic flows on non-compact manifolds in negative curvature.
This includes for example Schottky surfaces which will be studied in the present paper.
In the case of compact surfaces of constant negative curvature or more generally compact rank-1 locally symmetric spaces it has been proven that the invariant Ruelle distributions coincide with quantum phase space distributions~\cite{Weich.2021}.
These are the so-called Patterson-Sullivan distributions introduced by Anantharaman and Zelditch~\cite{Anantharaman.2007}.
Apart from these results the properties of invariant Ruelle distributions are widely unexplored territory.
Nevertheless motivated by the above results it is very likely that these invariant distributions are closely connected to the finer spectral and dynamical properties of the underlying flow.

The purpose of the present article is to develop a numerical algorithm that allows to concretely calculate the invariant Ruelle distributions and to provide some first numerical experiments that support the claim that invariant Ruelle distributions are an interesting spectral invariant that deserves further investigation.
As a concrete model we chose geodesic flows on Schottky surfaces which are a paradigmatic model for hyperbolic flows on non-compact manifolds.
In practice our numerical approach can and has been applied to $3$-disc scattering as well but the rigorous justification of its numerical convergence would require much more technical tools.
We therefore restrict to the setting of Schottky surfaces in this article.

\subsection*{Statement of results}\label{intro.1}

The primary concern of the present article are certain generalized densities $\mathcal{T}_{\lambda_0}$ called
\emph{invariant Ruelle distributions}.
For a given convex-cocompact hyperbolic surface $\mathbf{X}_\Gamma = \Gamma\backslash\mathbb{H}$ these can be
attached to any member $\lambda_0\in \mathrm{res}(\mathbf{X}_\Gamma)$ of the
discrete set of \emph{Pollicott-Ruelle resonances} $\mathrm{res}(\mathbf{X}_\Gamma) \subset \mathbb{C}$ of the
geodesic flow and they act on smooth test functions on the unit sphere bundle of the surface in a
flow-invariant manner:
\begin{equation*}
	\mathcal{T}_{\lambda_0}:~ f\longmapsto \mathcal{T}_{\lambda_0}(f)\in \mathbb{C} ~,
	\qquad f\in \mathrm{C}^\infty(S\mathbf{X}_\Gamma) ~.
\end{equation*}
These distributions encode non-trivial information about resonant states and
in the particular case of $\lambda_0 = \delta - 1$ with $\delta$ the Hausdorff dimension of the limit set of
$\Gamma$ being the first resonance they coincide with the Bowen-Margulis measures.
For the precise definition of Pollicott-Ruelle resonances and $\mathcal{T}_{\lambda_0}$ refer to
Section~\ref{prelim.1}.

Our means of investigating $\mathcal{T}_{\lambda_0}$ is the \emph{weighted zeta function} for $\mathbf{X}_\Gamma$
\begin{equation*}
	Z_f(\lambda) \defgr \sum_\gamma
	\frac{\mathrm{e}^{-\lambda T_\gamma}}{\vert \det(\mathrm{id} - \mathcal{P}_\gamma)\vert}
	\int_{\gamma^\#} f ~,
\end{equation*}
where the sum extends over all closed geodesics $\gamma$ of $\mathbf{X}_\Gamma$, $T_\gamma$ denotes
the length of $\gamma$, $\gamma^\#$ the primitive closed geodesic corresponding to $\gamma$, and
$\mathcal{P}_{\gamma}$ is the associated linearized Poincar\'{e} map.
It is known that $Z_f(\lambda)$ continues meromorphically to $\mathbb{C}$ and that this continuation is connected
to the invariant Ruelle distributions via the residue formula
\begin{equation*}
	\underset{\lambda = \lambda_0}{\mathrm{Res}}\left[ Z_f(\lambda) \right] = \mathcal{T}_{\lambda_0}(f) ~,
\end{equation*}
which is valid for any test function $f\in\mathrm{C}^\infty(S\mathbf{X}_\Gamma)$. An algorithm for the calculation of
weighted zeta functions therefore translates directly to an algorithm for the invariant Ruelle distributions.

Our main results are exactly such concrete formulae for $Z_f(\lambda)$ feasible for numerical evaluation.
To this end we introduce a \emph{dynamical determinant} for the given surface $\mathbf{X}_\Gamma$ as defined in Definition~\ref{def_dyn_det}:
\begin{equation*}
	d_f(\lambda, z, \beta) \defgr
	\exp\left( - \sum_{k = 1}^\infty \sum_{\gamma^\#}
	\frac{z^{k\cdot n(\gamma^\#)}}{k}
	\frac{\mathrm{e}^{-k\lambda T_\gamma^\# - k \beta \int_{\gamma^\#} f}}
	{\big\vert \mathrm{det}\big( \mathrm{id} - \mathcal{P}_{\gamma^\#}^k \big) \big\vert} \right) ~,
\end{equation*}
where now the sum extends over all primitive closed geodesics $\gamma^\#$ of $\mathbf{X}_\Gamma$ and the number
$n(\gamma^\#)\in\mathbb{N}$ is the so-called word length of $\gamma^\#$.

A priori $d_f(\lambda, z, \beta)$ converges locally uniformly for any fixed $(z, \beta)\in\mathbb{C}^2$ and $\mathrm{Re}(\lambda)$ sufficiently large by the exponential growth of the number of closed geodesics as a function of their maximal length.
This is considerably strengthened in Corollary~\ref{cor_fredholm_det}: $d_f(\lambda, z, \beta)$ continuous holomorphically to $\mathbb{C}$ in all of its three variables.
Furthermore $d_f$ can be used to calculate $Z_f$ due to Corollary~\ref{coro_log_deriv}
\begin{equation*}
	Z_f(\lambda) = \frac{\partial_\beta d_f(\lambda, 1, 0)}{d_f(\lambda, 1, 0)} ~,
\end{equation*}
which provides an independent proof of the meromorphic continuation of the weighted zeta function.\footnote{
	Note that this does not recover the whole strength of the general continuation theorem for $Z_f$ because it does not admit a straightforward interpretation of the poles as resonances and the residues as invariant Ruelle distributions.
}
This together with an everywhere absolutely convergent cycle expansion derived in Corollary~\ref{coro_cycle_exp}
\begin{equation*}
	d_f(\lambda, z, \beta) \defgr 1 + \sum_{n = 1}^\infty d_n(\lambda, \beta) z^n
\end{equation*}
almost puts us in a position to calculate concretely $Z_f(\lambda)$ for a given $\lambda\in\mathbb{C}$.
The missing ingredient to obtain numerically feasible formulae for $d_n(\lambda, \beta)$ is a concrete expression for the \emph{period integrals} $\int_{\gamma^\#} f$.
We present two approaches which are both reasonably cheap to evaluate and at the same time possess a straightforward geometrical interpretation.

In practice it turns out to be very favorable to exploit the symmetries of the underlying geometry as much as possible:
If $\mathbf{X}_\Gamma$ has an additional finite symmetry group $\mathbf{G}$ in a certain sense to be specified in Chapter~\ref{sym_red} then we can prove that $d_f$ factorizes which immediately implies a corresponding decomposition of $Z_f$:
\begin{equation*}
	d_f(\lambda, z,  \beta) = \prod_{\chi\in \widehat{\mathbf{G}}} d_{f, \chi}(\lambda, z,  \beta) ~. \qquad
	Z_f(\lambda) = \sum_{\chi\in \widehat{\mathbf{G}}}
	\frac{\partial d_{f, \chi}(\lambda, 1, 0)}{d_{f, \chi}(\lambda, 1, 0)} ~,
\end{equation*}
with both the product and the sum spanning the finitely many equivalence classes of unitary, irreducible representations of $\mathbf{G}$.
The individual terms $d_{f, \chi}$ exhibit far superior convergence properties compared to $d_f$ which is demonstrated as part of the numerical experiments which round off this article.

\subsection*{Paper organization}\label{intro.2}

The present paper is organized as follows:
In the introductory Chapter~\ref{prelim} the central objects of interest are defined namely Pollicott-Ruelle resonances, the invariant Ruelle distribution associated to such a resonances, and the weighted zeta function $Z_f(\lambda)$ (Chapter~\ref{prelim.1}).
While these definitions can be stated quite generally for the class of open hyperbolic systems our application is to the concrete subclass of geodesic flows on convex-cocompact hyperbolic surfaces (Chapter~\ref{prelim.2}).

As mentioned above the overall goal is the derivation of a numerical algorithm for the calculation of $Z_f(\lambda)$ on convex-cocompact surfaces.
The means to do just this is the dynamical determinant $d_f(\lambda, z, \beta)$ defined in Chapter~\ref{dyn_det}.
Just as $Z_f$ continues meromorphically to $\mathbb{C}$ so does $d_f$ continue holomorphically to $\mathbb{C}$ in all three of its arguments (Chapter~\ref{dyn_det.1}).
Furthermore a suitable logarithmic derivative of $d_f$ coincides with $Z_f$ (Chapter~\ref{dyn_det.2}) meaning that a numerical algorithm for the calculation of $d_f$ immediately transfers to one for $Z_f$.

Our numerical treatment of $d_f$ follows a philosophy termed cycle expansion which circumvents the difficulty that the original definition of $d_f$ in terms of an infinite sum does not converge in the domain where its zeros are located.
This culminates in the derivation of concrete formulae in Chapter~\ref{cycle_exp}.
The subsequent Chapter~\ref{algo} lays out two approaches for the last missing ingredient namely the (geometrically meaningful) numerical approximation of the period integrals $\int_{\gamma^\#} f$ appearing both in $d_f$ and $Z_f$.

While this completes a practically useful algorithm for the calculation of $d_f$ often one can do better in terms of runtime resource requirements by exploiting the inherent symmetries of the underlying surface.
Formalizing this symmetry reduction requires significant additional notation as well as effort and occupies the whole of Chapter~\ref{sym_red}.

The second-to-last Chapter~\ref{numerics} compiles a collection of example plots calculated with the machinery developed so far.
In particular this includes a case study supporting the claim of enhanced convergence properties of the symmetry reduced variants $d_{f, \chi}$ of the dynamical determinant $d_f$.
Finally Chapter~\ref{outlook} presents an outlook on open questions both in the realms of alternative numerical algorithms as well as of interesting numerical experiments to conduct in the future.


\subsection*{Acknowledgments}

This work has received funding from the Deutsche Forschungsgemeinschaft (DFG) (Grant No. WE 6173/1--1 Emmy Noether group “Microlocal Methods for Hyperbolic Dynamics”) as well as  SFB-TRR 358/1 2023 — 491392403 (CRC “Integral Structures in Geometry and Representation Theory”). P.S. was supported by an individual grant from the Studienstiftung des Deutschen Volkes.


\section{Analytical Preliminaries}\label{prelim}

We start this chapter off by giving a short introduction to the analytical theory of weighted zeta functions on open hyperbolic systems as presented in the companion paper \cite{Schuette.2021b} (Section \ref{prelim.1}).
Afterwards we describe the significantly more concrete dynamical setting which we will work in for the remainder of this article:
Geodesic flows on Schottky surfaces (Section~\ref{prelim.2}).

\subsection{Weighted Zeta Functions and Invariant Ruelle Distributions}\label{prelim.1}

Our presentation here follows closely a simplified version of \cite{Schuette.2021b}, see also \cite{Dyatlov.2016b}.
Let a smooth, possibly non-compact, manifold $\mathcal{M}$ and a smooth, possibly non-complete, flow $\varphi_t$ on $\mathcal{M}$ be given.
We make the following dynamical assumptions on $\varphi_t$:
\begin{enumerate}
	\item The generator $X$ of $\varphi_t$ vanishes nowhere,
	\item The trapped set of $\varphi_t$ defined by
	\begin{equation*}
	K \defgr \left\{x\in\mathcal{M} \,\big|\, \varphi_t(x) ~\text{exists}~ \forall t\in\mathbb{R} ~\text{and}~ \overline{\varphi_\mathbb{R}(x)} ~\text{compact} \right\}
	\end{equation*}
	is \emph{compact},
	\item $\varphi_t$ is \emph{hyperbolic on $K$} in the sense that for every $x\in K$ there exists a $\varphi_t$-invariant splitting of the tangent bundle
	\begin{equation*}
	T_x \mathcal{M} = \mathbb{R}\cdot X(x) \oplus E_s(x) \oplus E_u(x)
	\end{equation*}
	such that $E_s, E_u$ depend continuously on $x$ and the differential $\mathrm{d} \varphi_t$ acts in a contracting manner on $E_s$ and an expanding manner on $E_u$:
	\begin{equation*}
	\begin{split}
	\Arrowvert \mathrm{d}\varphi_t(x) v \Arrowvert_{T_{\varphi_t(x)} \mathcal{M}} &\leq C \exp(-c \vert t\vert) \Arrowvert v\Arrowvert_{T_x \mathcal{M}},\qquad t \geq 0,~ v\in E_s(x)\\
	\Arrowvert \mathrm{d}\varphi_t(x) v \Arrowvert_{T_{\varphi_t(x)} \mathcal{M}} &\leq C \exp(-c \vert t\vert) \Arrowvert v\Arrowvert_{T_x \mathcal{M}},\qquad t \leq 0,~ v\in E_u(x) ~.
	\end{split}
	\end{equation*}
\end{enumerate}
In this setting one can define a discrete subset $\mathrm{res}(X) \subseteq \mathbb{C}$ called \emph{Pollicott-Ruelle resonances of $X$} as follows:
Basic function analysis proves that the resolvent\footnote{
	Or rather its restriction to a suitable subset in a compact ambient manifold;
	we disregard this technical detail in the upcoming rather informal discussion.
}
$(X + \lambda)^{-1}$ yields a holomorphic family of bounded operators $\mathrm{L}^2 \rightarrow \mathrm{L}^2$ for sufficiently large $\mathrm{Re}(\lambda) \gg 0$.
Through the usage of anisotropic Sobolev spaces one can show \cite{Dyatlov.2016a,Dyatlov.2016b} that restricting the domain and enlarging the codomain enables meromorphic continuation to $\mathbb{C}$ of $(X + \lambda)^{-1}$ but now as a family of operators $\mathrm{C}^\infty_\mathrm{c} \rightarrow \mathcal{D}'$.
Our resonances are precisely the poles of this continuation.

Given a resonance $\lambda_0 \in \mathrm{res}(X)$ we can compute the residue $\Pi_{\lambda_0}$ of $(X + \lambda)^{-1}$ at $\lambda = \lambda_0$.
The meromorphic continuation outlined above uses the analytic Fredholm theorem as a central tool so the operator $\Pi_{\lambda_0}$ turns out to be of finite rank.
Furthermore, as a consequence of wavefront estimates for $\Pi_{\lambda_0}$, a certain generalization of the Hilbert space trace called a flat trace exists for the family of operators $\mathrm{Tr}^\flat\big( f \Pi_{\lambda_0} \big)$ where $f\in \mathrm{C}^\infty(\mathcal{M})$.
One calls the generalized density
\begin{equation*}
\mathcal{T}_{\lambda_0}: \mathrm{C}^\infty(\mathcal{M})\ni f\longmapsto \mathrm{Tr}^\flat\big( f\Pi_{\lambda_0} \big)\in \mathbb{C}
\end{equation*}
the \emph{invariant Ruelle distribution associated with $\lambda_0$} \cite{Weich.2021}.
We remark here that $\mathcal{T}_{\lambda_0}$ is supported on the trapped set $K$, so in particular it is compactly supported:
$\mathcal{T}_{\lambda_0}\in \mathcal{E}'(\mathcal{M})$.

In \cite{Schuette.2021b} the authors introduced a weighted zeta function which allows for a significantly more concrete approach to invariant Ruelle distributions.
They associated with the flow $\varphi_t$ and a weight $f\in \mathrm{C}^\infty(\mathcal{M})$ the complex function
\begin{equation} \label{eq_def_weighted_zeta}
Z_f(\lambda) \defgr \sum_\gamma \frac{\mathrm{e}^{-\lambda T_\gamma}}{\big\vert \det\big(\mathrm{id} - \mathcal{P}_\gamma \big) \big|} \int_\gamma f ~,
\end{equation}
with the sum extending over all closed trajectories of $\varphi_t$, $T_\gamma$ denoting the period length of the trajectory $\gamma$, and $\mathcal{P}_\gamma$ being the linearized Poincar\'{e} map associated with $\gamma$.
The latter is simply the differential of $\varphi_t$ restricted to stable and unstable directions:
\begin{equation*}
\mathcal{P}_{\gamma(t)} \defgr \mathrm{d} \varphi_{-t}(\gamma(t)) \big|_{E_s(\gamma(t)) \oplus E_u(\gamma(t))} ~,
\end{equation*}
where the dependence on the base point can be omitted when taking the determinant.
While $Z_f$ converges absolutely only for $\mathrm{Re}(\lambda) \gg 0$ it continues meromorphically to the whole complex plane \cite[Theorem~1.1]{Schuette.2021b}.
The circumstance that $Z_f$ is a useful function to consider stems from the following residue formula relating $Z_f$ to Ruelle distributions:
\begin{equation}
\mathcal{T}_{\lambda_0}(f) = \underset{\lambda = \lambda_0}{\mathrm{Res}}\left[ Z_f(\lambda) \right], \qquad f\in \mathrm{C}^\infty(\mathcal{M}) ~.
\end{equation}

\subsection{Introduction to Schottky Surfaces}\label{prelim.2}

In this section we provide a short introduction to Schottky, i.e. convex cocompact hyperbolic, surfaces.
The material presented here is quite classic and can be found in e.g. \cite{Borthwick.2016,Dalbo.2011}.

Schottky groups are discrete subgroups of the group $G\defgr \mathrm{PSL}(2, \mathbb{R})$ of orientation preserving isometries of the \emph{upper half plane}
\begin{equation*}
\mathbb{H} \defgr \{z\in \mathbb{C} \,|\, \mathrm{Im}(z) > 0\} ~,
\end{equation*}
equipped with the Riemannian metric
\begin{equation*}
\mathrm{g}_\mathbb{H}(x + \mathrm{i}y) \defgr \frac{\mathrm{d}x^2 + \mathrm{d}y^2}{y^2} ~.
\end{equation*}

The geodesics of $(\mathbb{H}, \mathrm{g}_\mathbb{H})$ are given by semicircles centered on the real line and by straight lines parallel to the imaginary axis.
We denote the associated geodesic flow on the unit tangent bundle $S\mathbb{H} = \mathbb{H}\times \{v\in\mathbb{C} \,|\, \vert v\vert = 1\}$ by $\varphi_t$.

We will introduce Schottky groups by their dynamics on $\mathbb{H}$
To this end let the reader be reminded that $G$ acts on the whole Riemann sphere $\mathbb{C}\cup \{\infty\}$ and therefore on $\mathbb{H}$ via Moebius transformations
\begin{equation*}
\begin{pmatrix}
a & b\\
c & d
\end{pmatrix} \cdot z \defgr \frac{az + b}{cz + d} ~.
\end{equation*}
This action extends to $S\mathbb{H}$ by acting on fiber coordinates via the derivative of a Moebius transformation.
Now recall that non-trivial isometries can be classified according to the absolute value of the trace of their matrix representation which determines a certain dynamical and fixed point behavior on the compactification $\overline{\mathbb{H}} \defgr \mathbb{H} \cup \mathbb{R} \cup \{\infty\}$:
\begin{enumerate}
	\item $\vert \mathrm{tr}\vert > 2$: hyperbolic isometry, two distinct fixed points in $\mathbb{R} \cup \{\infty\}$
	\item $\vert \mathrm{tr}\vert = 2$: parabolic isometry, one unique fixed point on $\mathbb{R} \cup \{ \infty \}$,
	\item $\vert \mathrm{tr}\vert < 2$: elliptic isometry, one unique fixed point in $\mathbb{H}$.
\end{enumerate}
We summarize shortly the dynamical properties of hyperbolic isometries because those will be particularly important for us:
Given a hyperbolic $g$ there exists a unique hyperbolic geodesic $\gamma(t)$ such that its endpoints at infinity $\lim_{t\rightarrow\pm\infty} \gamma(t)$ are the attracting and repelling fixed points of $g$.
One calls $\gamma(\mathbb{R})\subseteq \mathbb{H}$ the axis of $g$ and $g$ acts on $\gamma$ as a translation by a fixed hyperbolic distance $\ell(g) > 0$.
This distance is called the \emph{displacement length} of the isometry $g$.

With this in mind we can define \emph{Schottky groups} as those discrete, free subgroups $\Gamma < \mathrm{PSL}(2, \mathbb{R})$ which are finitely generated by hyperbolic isometries.
If $\{g_1, \hdots, g_r\}$ is a generating set for $\Gamma$ of minimal size then $r$ is called the \emph{rank} of $\Gamma$ and by a classical result of Maskit \cite{Maskit.1967} there exists a collection of open Euclidean discs $D_1, \hdots, D_{2r}$ with disjoint closures and centered on the real line such that
\begin{equation} \label{eq_schottky_dyn}
g_i(\partial D_i) = \partial D_{i + r}, \qquad g_i(D_i) = \mathbb{C} \setminus D_{i + r} ~.
\end{equation}
We call these circles \emph{fundamental circles} for the chosen generators and a fundamental domain for the action of $\Gamma$ on $\mathbb{H}$ is given by their complement $\mathbb{H} \setminus \bigcup_{i = 1}^{2r} D_i$.
We will refer to this particular fundamental domain as the \emph{canonical} one.
For an illustration see Figure~\ref{fig1}.
\begin{figure}[h]
	\includegraphics[scale=0.35]{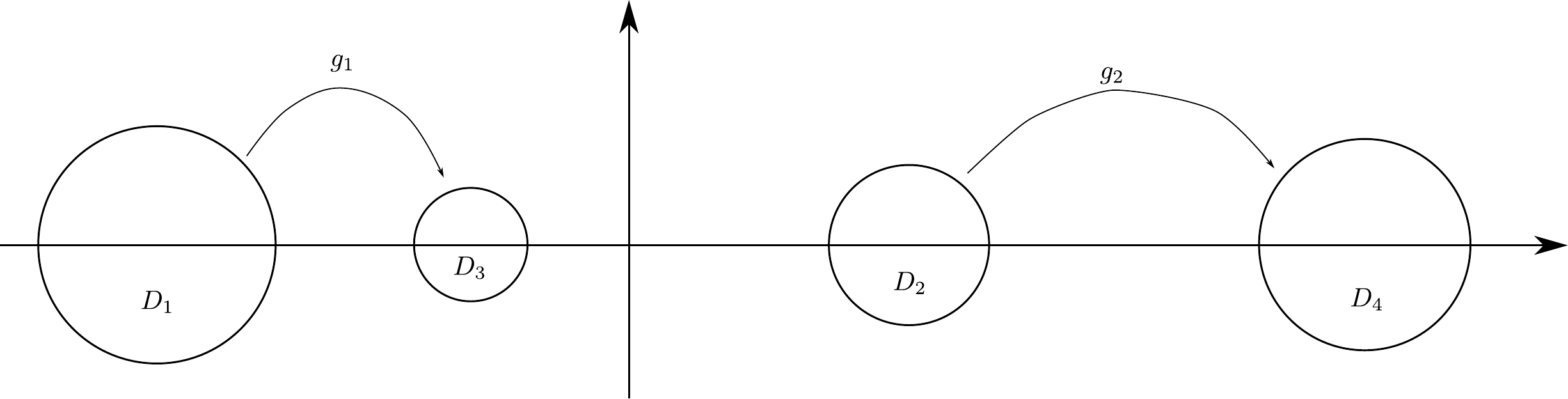}
	\caption{The fundamental circles and group element actions for a Schottky group of rank $r=2$ (three-funnel surface, see Figure~\ref{fig2} below).}
	\label{fig1}
\end{figure}

Conversely one can define a Schottky group of rank $r$ by fixing open discs $D_1, \hdots, D_{2r}$ with pairwise disjoint closures and centered on the real line and then taking the group generated by hyperbolic elements $g_1, \hdots, g_r$ satisfying \eqref{eq_schottky_dyn}.

For convenience of notation one usually defines $g_{i + r} \defgr g_i^{-1}$ for $1 \leq i\leq r$ and subsequently extends the indexing of generators to arbitrary $i > 2r$ by defining $g_i \defgr g_{(i\,\text{mod}\, 2r)}$.
Setting $D_i \defgr D_{(i\,\text{mod}\, 2r)}$ property \eqref{eq_schottky_dyn} continues to hold for these extended definitions.
We will also frequently use indices in the quotient ring $\mathbb{Z} \slash 2r\mathbb{Z}$.

To every Schottky group $\Gamma$ we associate a \emph{Schottky surface} obtained as the quotient space $\mathbf{X}_\Gamma \defgr \Gamma \setminus \mathbb{H}$.
By discreteness of $\Gamma$ the set $\mathbf{X}_\Gamma$ can be equipped with a canonical smooth structure.
Furthermore the metric $\mathrm{g}_\mathbb{H}$ is $\Gamma$-invariant and thus descends to $\mathbf{X}_\Gamma$ again making the quotient space a Riemannian manifold of constant negative curvature.
It is non-compact, of infinite volume, and the geodesic flow on its unit tangent bundle fits into the framework of open hyperbolic systems as presented in Section~\ref{prelim.1}, see~\cite[Section~6.3]{Dyatlov.2016b}.
In particular we can speak about Ruelle resonances, weighted zeta functions, and invariant Ruelle distributions on Schottky surfaces.

At first glance the non-compactness of Schottky surfaces might make them seem quite difficult in terms of numerical treatment.
Their suitability for our purposes follows from the particularly simply structure of free groups combined with the more general correspondence between group elements and closed geodesics \cite[Proposition~2.25]{Borthwick.2016}:
The closed oriented geodesics of a Schottky surface $\mathbf{X}_\Gamma$ are in bijection to the conjugacy classes of the group $\Gamma$ and the length of a geodesic $T_\gamma$ coincides with the displacement length $\ell(g)$ for any element $g$ in the associated conjugacy class.
We denote by $\gamma(g)$ the image of the conjugacy class containing the group element $g\in\Gamma$ under this bijection record the important relation \cite[Eq.~(15.3)]{Borthwick.2016}
\begin{equation}\label{eq_deriv_length}
g'(x_-) = \mathrm{e}^{-\ell(g)} = \mathrm{e}^{-T_{\gamma(g)}}
\end{equation}
between displacement length and attracting fixed point $x_+$ of $g$.

Because $\Gamma$ is finitely generated we can represent its elements as sequences $(i_1, \hdots, i_n)\in \mathcal{A}^*$ over the alphabet $\mathcal{A}\defgr (\mathbb{Z} \slash 2r\mathbb{Z})$ by defining
\begin{equation*}
g_{(i_1, \hdots, i_n)} \defgr g_{i_n}\cdots g_{i_1} ~.
\end{equation*}
Any such sequence $(i_1, \hdots, i_n)$ is called a \emph{word}, the $i_j$ its \emph{letters}, and $n$ its \emph{(word) length}.
From the fact that $\Gamma$ is free it follows immediately that the map from words to group elements becomes a bijection if we restrict to \emph{reduced words}, i.e. elements of $\{(i_1, \hdots, i_n)\in \mathcal{A}^* \,|\, i_j\neq i_{j + 1} + r\, \forall\, 1\leq j\leq n-1\}$.

Now the conjugacy classes of $\Gamma$ can be represented by words of minimal length and such a representation is unique modulo cyclic shifts of its letters.
We denote the set of all possible indices of such representatives of length $n$ by $\mathcal{W}_n$, i.e.:
\begin{equation*}
\mathcal{W}_n \defgr \big\{ (i_1, \hdots, i_n) \in \mathcal{A}^n \,\big|\, i_j\neq i_{j + 1} + r\, \forall\, 1\leq j\leq n-1 ~\text{and}~ i_n\neq i_1 + r \big\} ~.
\end{equation*}
We will also call $\mathcal{W}_n$ the \emph{set of closed words of length $n$}.
If the geodesic $\gamma\subseteq \mathbf{X}_\Gamma$ is represented by $g_{(i_1, \hdots, i_n)}$ with $(i_1, \hdots, i_n)\in \mathcal{W}_n$ then we denote by $n(\gamma) \defgr n$ the length of its minimal representation and call this the \emph{word length} of $\gamma$.

Finally, given a function $f: \bigcup_{i\neq j} D_i\times D_j\rightarrow \mathbb{C}$ we define its \emph{iteration along the group element} $g = g_{i_n}\cdots g_{i_1}$ to be the product
\begin{equation*} \label{eq_potential_iterate}
f_g \defgr f_{(i_1, \hdots, i_n)} \defgr f(x_-, x_+) \cdot f(g_{i_1} x_-, g_{i_1} x_+) \cdot \hdots \cdot f(g_{i_{n-1}}\cdots g_{i_1} x_-, g_{i_{n-1}}\cdots g_{i_1} x_+) ~,
\end{equation*}
where$(x_-, x_+)$ are the repelling and attracting fixed points of $g$.

In our numerics we will mostly be dealing with Schottky surfaces of rank $r = 2$.
From the topological standpoint there are only two possibilities for such surfaces corresponding to distinct combinatorics of their actions on the canonical fundamental domain.
These are depicted in Figure~\ref{fig2}.
\begin{figure}[h]
	\includegraphics[scale=0.3]{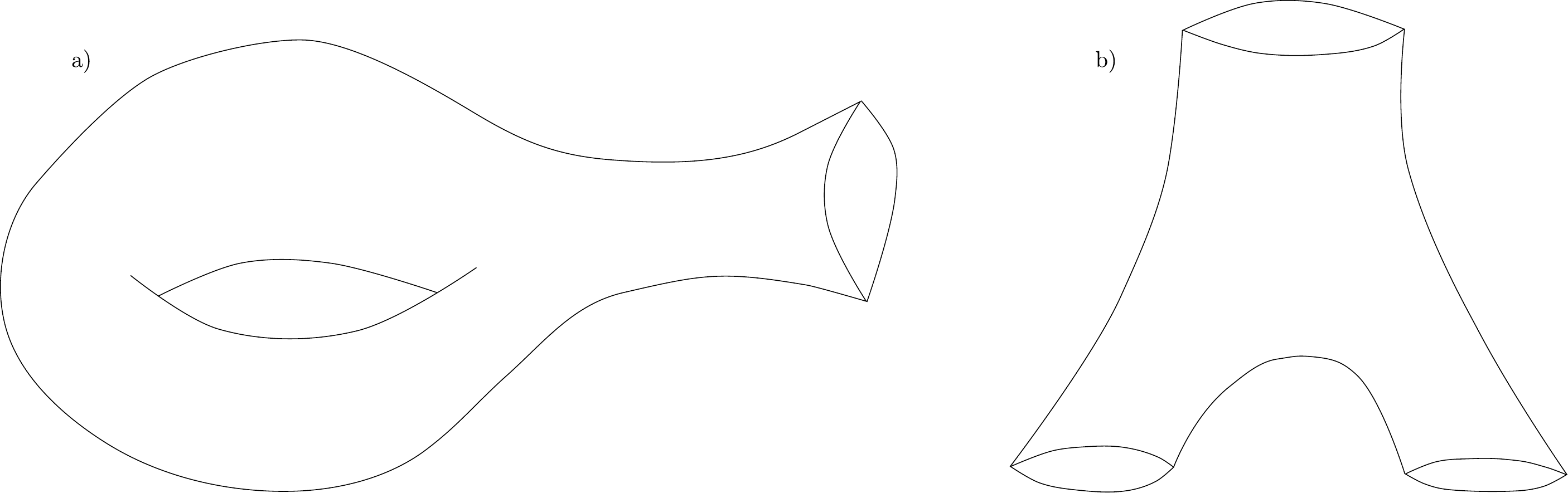}
	\caption{The two topological possibilities for Schottky surfaces of rank two: (a) Funneled torus. (b) Three-funnel surface.}
	\label{fig2}
\end{figure}


\section{Introducing Dynamical Determinants}\label{dyn_det}

In this section we introduce the central object from which our numerically feasible formula for weighted zeta functions $Z_f$ will be derived:
The dynamical determinant.

\begin{defn}\label{def_dyn_det}
	Let $\mathbf{X}_\Gamma$ be a Schottky surface and $f\in \mathrm{C}^\infty\left(S \mathbf{X}_\Gamma\right)$.
	Then the associated \emph{dynamical determinant} at $(\lambda, z, \beta)\in \mathbb{C}^3$ is formally defined as
	\begin{equation}\label{def_df}
		d_f(\lambda, z, \beta) \defgr
		\exp\left( - \sum_{k = 1}^\infty \sum_{\gamma^\#}
		\frac{z^{k\cdot n(\gamma^\#)}}{k}
		\frac{\mathrm{e}^{-k\lambda T_\gamma^\# - k \beta \int_{\gamma^\#} f}}
		{\big\vert \mathrm{det}\big( \mathrm{id} - \mathcal{P}_{\gamma^\#}^k \big) \big\vert} \right) ~,
	\end{equation}
	where the sum stretches over all primitive closed geodesics $\gamma^\#$.
\end{defn}

A numerical implementation of the individual summands appearing in this dynamical determinant is straightforward.
One simply uses the symbolic coding of closed geodesics and the fact that eigenvalues of the linearized Poincar\'{e} map are given by exponentials of lengths.
The concrete calculation of periods integrals $f_{\gamma^\#}$ is slightly less obvious but can be done quite efficiently after certain simplifications.
Two alternatives will be presented in Section~\ref{algo}.
Calculation of $d_f$ itself demands additional attention as the defining formula \eqref{def_dyn_det} does not converge on the whole complex plane but for sufficiently small $\vert z\vert$ only.
How to overcome this difficulty is the main content of Section~\ref{cycle_exp}.

The question remains how we can exploit $d_f$ for the calculation of $Z_f$, the latter being the actual object of interest for us.
We will answer this question in two steps in the upcoming sections:
First we prove that $d_f$ is analytic in its variables in Section~\ref{dyn_det.1}.
Its logarithmic derivative $\partial_\beta \log d_f(\lambda, 1, 0)$ is thus meromorphic and Section~\ref{dyn_det.2} demonstrates that it coincides with $Z_f(\lambda)$.

\subsection{Dynamical Determinants as Fredholm Determinants}\label{dyn_det.1}

In this section we prove that the dynamical determinant of Definition \ref{def_dyn_det} actually yields a well-defined holomorphic function of $(\lambda, z, \beta)$.
One possible way to do this would be applying microlocal techniques and anisotropic Sobolev spaces as for example presented in Baladi's book \cite{Baladi.2018}.
While generally feasible for the problem at hand, the methods presented in \cite{Baladi.2018} are specifically geared towards the setting of low regularity:
Applying it to Schottky surfaces would discard the additional information provided by the fact that Schottky groups are defined in terms of holomorphic functions.

We therefore turn towards the ideas developed by Rugh~\cite{Rugh.1992,Rugh.1996} in the analytic setting.
Instead of inferring analyticity of $d_f$ directly from~\cite[Theorem~1]{Rugh.1996} we provide a self-contained proof by adapting his techniques and notation to our concrete setting of Schottky surfaces.
Besides being self-contained this will later on offer a convenient entry point for symmetry reduction (Section~\ref{sym_red}) and should come in handy for the development of alternative numerical algorithms (Section~\ref{outlook}).

Before diving into the proof we give some definitions:
Let $\mathcal{D}\subseteq\mathbb{C}$ be an open disc in the complex plane.
Then we denote by
\begin{equation*}
\mathcal{H}^2(\mathcal{D}) \defgr \{f\in \mathrm{L}^2(\mathcal{D}) \,|\, f ~\text{is holomorphic} \}
\end{equation*}
the Bergman space of square-integrable, holomorphic functions on $\mathcal{D}$.
Furthermore we denote its dual space by $\mathcal{H}^{-2}(\mathcal{D})$ and identify it with the Bergman space $\mathcal{H}^2(\mathbb{C}\setminus\mathcal{D})$ via the bilinear pairing
\begin{equation*}
\langle u, v\rangle \defgr \int_{\partial \mathcal{D}} u(z) v(z) \frac{\mathrm{d}z}{2\pi\mathrm{i}}~, \qquad u\in \mathcal{H}^2(\mathcal{D}),\, v\in \mathcal{H}^{-2}(\mathcal{D}) ~.
\end{equation*}
If $\mathcal{D}$ is the unit disc $\mathbb{D} = \{z\in\mathbb{C} \,|\, \vert z\vert < 1\}$ then $\psi_n(z) \defgr \sqrt{(n + 1) / \pi} z^n$ defines an orthonormal basis for $\mathcal{H}^2(\mathcal{D})$ with dual basis given by $\psi_n^*(z) \defgr \sqrt{\pi / (n + 1)} z^{-n - 1}$, i.e. $\langle \psi_n, \psi_m^*\rangle = \delta_{n, m}$.
An arbitrary disc is easily reduced to this case by translation and scaling.

A last ingredient which will appear in the following proof is the so-called \emph{Bergman kernel} $K_\mathcal{D}(z, w)$ of $\mathcal{D}$.
This reproducing kernel satisfies the defining relation\footnote{
	Classically one expresses this relation as an integral over $\mathcal{D}$ instead of $\partial \mathcal{D}$.
	In our proof below we will need to deal with integrals over $\partial \mathcal{D}$, though, so we use this slightly less common definition.
}
\begin{equation*}
f(z) = \int_{\partial \mathcal{D}} K_\mathcal{D}(z, w) f(w) \mathrm{d}w  ~,
\end{equation*}
and can be expressed as a sum over an orthonormal basis.
Below we will employ an explicit expression for the unit disc and again reduce the case of a general disc by translation and scaling.

With these prerequisites at hand the main theorem now reads as follows:
\begin{theorem}[\cite{Rugh.1996},~Thm.~1] \label{thm_fredholm_det}
	Let $\Gamma$ be a rank-$r$ Schottky surface with generators $g_1, \hdots, g_r$ and fundamental circles $D_1, \hdots, D_{2r}$.
	Given a potential $V$ which is analytic in a neighborhood of $\bigcup_{i\neq j} D_i\times D_j$, the associated \emph{transfer operator} defined by the formula
	\begin{equation} \label{eq_def_transfer}
	\begin{split}
	&\quad \mathrm{L}_V: \bigoplus_{i\neq j} \mathcal{H}^{-2}(D_i) \otimes \mathcal{H}^2(D_j) \longrightarrow \bigoplus_{i\neq j} \mathcal{H}^{-2}(D_i) \otimes \mathcal{H}^2(D_j) ~,\\
	&\langle \mathrm{L}_V u(z_1, z_2), v(z_1)\rangle\bigg|_{\substack{v\in \mathcal{H}^2(D_i) \\ z_2\in D_j}} \defgr \int_{\partial D_i} V(z_1, z_2) v(z_1) u(g_i z_1, g_i z_2) \frac{\mathrm{d}z_1}{2\pi \mathrm{i}} ~, \quad i\neq j ~,
	\end{split}
	\end{equation}
	is a well-defined trace-class operator and its Fredholm determinant is an analytic function that satisfies the following identity for sufficiently small $\vert z \vert$:
	\begin{equation} \label{eq_transfer_det}
	\det\left( \mathrm{id} - z \mathrm{L}_V \right) = \exp\left( - \sum_{n = 1}^\infty \frac{z^n}{n} \sum_{w\in \mathcal{W}_n} \frac{V_w}{(1 - \mathrm{e}^{-T_{\gamma(g_w)}}) (\mathrm{e}^{T_{\gamma(g_w)}} - 1)} \right) ~.
	\end{equation}
\end{theorem}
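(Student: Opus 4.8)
The plan is to follow Rugh's analytic scheme: present $\mathrm{L}_V$ as a finite block matrix of weighted composition operators between Bergman spaces, infer trace-class-ness (hence entireness of the Fredholm determinant) from the pairwise disjointness of the disc closures, and read off \eqref{eq_transfer_det} by localising the trace of each power of $\mathrm{L}_V$ at the fixed points of the associated group elements. Concretely, after an affine conjugation reducing every $D_i$ to the unit disc $\mathbb{D}$ — so that $\mathcal{H}^2$ and $\mathcal{H}^{-2}$ carry the monomial bases $\psi_n$, $\psi_n^\ast$ fixed in the preamble — I would decompose $u=(u_{ij})_{i\neq j}$ along the direct sum and inspect \eqref{eq_def_transfer} entry by entry. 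Each nonzero entry sends a component $u_{i'j'}$ to a component of $\mathrm{L}_V u$ by precomposing with $g_i$ in both variables, multiplying by the holomorphic symbol $V(\,\cdot\,, z_2)$, and integrating against the Cauchy kernel over $\partial D_i$; the Schottky relations \eqref{eq_schottky_dyn} together with the disjointness of $\overline{D_1},\dots,\overline{D_{2r}}$ ensure that every composition occurring — in the $\mathcal{H}^2$-slot $z_2$ and in the $\mathcal{H}^{-2}$-slot $z_1$ alike — has its domain mapped \emph{compactly} into the target domain. This gives well-definedness and, more importantly, it triggers the classical nuclearity estimate underlying \cite{Rugh.1992}: written in the monomial basis after the reduction to $\mathbb{D}$, such a weighted composition operator has singular values decaying geometrically at a rate governed by the separation of the circles involved. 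Since $\mathrm{L}_V$ is a finite matrix of these (acting on the two tensor Bergman factors, intertwined by the bounded holomorphic weight $V$), it is trace class, with an explicit bound on $\|\mathrm{L}_V\|_{\mathrm{tr}}$ in terms of $\sup|V|$ and the Schottky data.

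Trace-class-ness makes $z\mapsto\det(\mathrm{id}-z\mathrm{L}_V)$ an entire function, and jointly holomorphic once $V$ is allowed to depend holomorphically on auxiliary parameters, as will be used in Corollary~\ref{cor_fredholm_det}. For $|z|$ small enough (smaller than the reciprocal of the spectral radius of $\mathrm{L}_V$) I would invoke the standard identity
\begin{equation*}
\log\det(\mathrm{id}-z\mathrm{L}_V)=-\sum_{n=1}^\infty\frac{z^n}{n}\,\mathrm{Tr}\big(\mathrm{L}_V^n\big),
\end{equation*}
which reduces the theorem to computing $\mathrm{Tr}(\mathrm{L}_V^n)$. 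Expanding $\mathrm{L}_V^n$ along the block structure, a length-$n$ composition chain is nonzero precisely when its index sequence satisfies the admissibility constraint forced by \eqref{eq_schottky_dyn}, and this is exactly the cyclically reduced-word condition defining $\mathcal{W}_n$; the diagonal block attached to $w=(i_1,\dots,i_n)\in\mathcal{W}_n$ is a weighted composition operator whose symbol is essentially $g_w=g_{i_n}\cdots g_{i_1}$, a hyperbolic Möbius transformation with multipliers $\mathrm{e}^{-T_{\gamma(g_w)}}$ and $\mathrm{e}^{T_{\gamma(g_w)}}$ at its two fixed points by \eqref{eq_deriv_length}. Its trace is an iterated contour integral which collapses — on deforming each contour onto the relevant fixed point and using the reproducing property of the Cauchy/Bergman kernel — to the iterated weight $V_w$ evaluated at those fixed points times a product of two geometric series: the $\mathcal{H}^2$-factor contributes $\sum_{m\geq 0}\mathrm{e}^{-mT_{\gamma(g_w)}}=(1-\mathrm{e}^{-T_{\gamma(g_w)}})^{-1}$, while the $\mathcal{H}^{-2}$-factor, whose monomials carry exponents $\geq 1$, contributes $\sum_{m\geq 1}\mathrm{e}^{-mT_{\gamma(g_w)}}=(\mathrm{e}^{T_{\gamma(g_w)}}-1)^{-1}$. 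Summing over $w\in\mathcal{W}_n$ produces the asserted value of $\mathrm{Tr}(\mathrm{L}_V^n)$, and substituting it into the logarithmic expansion yields \eqref{eq_transfer_det}, the right-hand series then converging on the same disc automatically.

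The routine parts are the nuclearity estimate and the combinatorial bookkeeping of admissible chains. The genuinely delicate step — the reason Rugh's two-variable framework is needed at all — is the trace computation: one must split the linearised dynamics into a true contracting direction, carried by $\mathcal{H}^2$, and a true expanding direction, carried by the dual space $\mathcal{H}^{-2}$, the exponent shift in the dual monomials being exactly what produces the asymmetric factor $\mathrm{e}^{T}-1$ rather than $1-\mathrm{e}^{-T}$ (matching $|\det(\mathrm{id}-\mathcal{P}_\gamma)|=(1-\mathrm{e}^{-T})(\mathrm{e}^{T}-1)$ for a surface geodesic), and one must simultaneously justify the contour deformations onto the fixed points in both variables. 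Checking that the admissibility condition on the index chains coincides precisely with the reduced-word condition defining $\mathcal{W}_n$ is the point I would treat most carefully.
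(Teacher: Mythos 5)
Your proposal is correct and follows essentially the same route as the paper's proof: trace-class-ness is established via geometric decay of singular values in the monomial bases (the paper makes this precise with the Fan inequality and a min-max estimate), the Fredholm determinant is reduced to traces of powers by the standard $\log\det$ expansion, and each diagonal block of $\mathrm{L}_V^n$ — indexed exactly by $\mathcal{W}_n$ — is evaluated by localising a double contour integral at the pair of fixed points of $g_w$, with $\mathcal{H}^2$ (contracting slot) yielding $(1-\mathrm{e}^{-T})^{-1}$ and $\mathcal{H}^{-2}$ (expanding slot, exponent-shifted dual monomials) yielding $(\mathrm{e}^{T}-1)^{-1}$. The only cosmetic difference is that the paper packages the trace computation through the explicit Bergman kernel $K(x,y)=1/(y-x)$ and the residue theorem rather than summing the geometric series of Koenigs eigenvalues directly; the two are equivalent.
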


\begin{proof}
	As in~\cite[Lemma~15.7]{Borthwick.2016} we may deduce the trace-class property of~\eqref{eq_def_transfer} from exponential bounds on the singular values $\mu_j(\mathrm{L}_V)$ of the transfer operator.
	To obtain such bounds it suffices to consider the norms of the images of an orthonormal basis of $\mathcal{H}^{-2}(D_i) \otimes \mathcal{H}^2(D_j)$ under the components of $\mathrm{L}_V$ by combining the additive Fan inequality \cite[A.25]{Borthwick.2016}, c.f. Appendix~\ref{app_fredholm_det}, with the basic estimate
	\begin{equation*}
	\mu_j(A) \leq \sum_{i = j}^\infty \Arrowvert A\phi_i\Arrowvert ~,
	\end{equation*}
	obtained via the min-max estimate \cite[A.23]{Borthwick.2016}, see also Appendix~\ref{app_fredholm_det}, and valid for any bounded operator $A: \mathcal{H}\rightarrow \mathcal{H}'$ between Hilbert spaces $\mathcal{H}, \mathcal{H}'$ and an orthonormal basis $\{\phi_i\}$ of $\mathcal{H}$.
	
	Now the potential $V$ is bounded on the closure of the poly-discs $D_i\times D_j$ and therefore acts as a bounded operator.
	As in \cite[Eq.~(15.15)]{Borthwick.2016} are thus left with the task of estimating the pullback action
	\begin{equation*}
	\Arrowvert (\psi_n^{i, *}\circ g_i) \otimes (\psi_n^j\circ g_i) \Arrowvert_{\mathrm{L}^2((\mathbb{C} \setminus D_i)\times D_j)} ~, \quad i\neq j ~,
	\end{equation*}
	where the orthonormal basis elements $\psi_n^i$ and $\psi_n^{i, *}$ are obtained from $\psi_n$ and $\psi_n^*$ by a suitable scaling and translating to $D_i$ and $\mathbb{C}\setminus D_j$, respectively.
	Now we observe that $g_i$, $i\neq j$, acts in contracting fashion on $D_j$ and in expanding fashion on $D_i$.
	It thus follows that for some constants $C, c > 0$ we have
	\begin{equation*}
	\Arrowvert (\psi_n^{i, *}\circ g_i) \otimes (\psi_n^j\circ g_i) \Arrowvert_{\mathrm{L}^2((\mathbb{C} \setminus D_i)\times D_j)} \leq C \mathrm{e}^{-c n} ~, \quad i\neq j ~,
	\end{equation*}
	where the constants dependent on the rate of contraction/expansion and the derivative of $g_i$. Now this in turn implies the estimate
	\begin{equation} \label{eq_sing_value_estimate}
	\mu_j(\mathrm{L}_V) \leq C' \sum_{n = j}^\infty \mathrm{e}^{-c n} = C' \frac{\mathrm{e}^{-c j}}{1 - \mathrm{e}^{-c}} ~,
	\end{equation}
	for some constant $C' > 0$ which additionally depends on the potential $V$ and finally proving the trace-class claim.
	
	To demonstrate the determinant formula \eqref{eq_transfer_det} we proceed similarly to \cite[Theorem~15.10]{Borthwick.2016} by first rewriting
	\begin{equation} \label{eq_log_det}
	\det\left( \mathrm{id} - z \mathrm{L}_V \right) = \exp\left( \mathrm{Tr} \left( \log(\mathrm{id} - z\mathrm{L}_V) \right) \right) = \exp\left( - \sum_{n = 1}^\infty \frac{z^n}{n} \mathrm{Tr}\left( \mathrm{L}_V^n \right) \right) ~.
	\end{equation}
	for values of $\vert z\vert$ small enough that the logarithm converges.
	The traces of iterates $\mathrm{L}_V^n$ decompose further in terms of components of $\mathrm{L}_V$, i.e. its restrictions in domain and codomain to the individual spaces $\mathcal{H}^{-2}(D_i)\otimes\mathcal{H}^2(D_j)$ for $i\neq j$.
	Multiplying out the defining formula \eqref{eq_def_transfer} and collecting all resulting components we see that only the	\emph{diagonal entries} of the form
	\begin{equation*}
	\mathrm{L}^{i_1, \hdots, i_n}_V: \mathcal{H}^{-2}(D_{i_n + r})\otimes \mathcal{H}^2(D_{i_1}) \longrightarrow \mathcal{H}^{-2}(D_{i_n + r})\otimes \mathcal{H}^2(D_{i_1})
	\end{equation*}
	can contribute to the trace. As a result we obtain
	\begin{equation*}
	\mathrm{Tr}\left( \mathrm{L}^n_V \right) = \sum_{i_1, \hdots, i_n} \mathrm{Tr}\left( \mathrm{L}^{i_1, \hdots, i_n}_V \right) ~,
	\end{equation*}
	where $i_j + r \neq i_{j + 1}$ for $1\leq j\leq n - 1$, $i_n + r \neq i_1$, and the diagonal components are explicitly given by
	\begin{equation} \label{eq_transfer_comp}
	\begin{split}
	&\qquad \left\langle \mathrm{L}^{i_1, \hdots, i_n}_V u(z_1, z_2), v(z_1) \right\rangle \bigg|_{\substack{v\in \mathcal{H}^2(D_{{i_n} + r}) \\ z_2\in D_{i_1}}} \\
	&= \int_{\partial D_{{i_n} + r}} V_{g_{(i_1, \hdots, i_n)}^{-1}}(z_1, z_2) v(z_1) u\big( g_{(i_1, \hdots, i_n)}^{-1}(z_1), g_{(i_1, \hdots, i_n)}^{-1}(z_2) \big) \frac{\mathrm{d} z_1}{2\pi\mathrm{i}} ~,
	\end{split}
	\end{equation}
	for any element $u\in \mathcal{H}^{-2}(D_{{i_n} + r}) \otimes \mathcal{H}^2(D_{i_1})$ and using the shorthand notations $g_{i_1}^{-1} \cdots g_{i_n}^{-1} = g_{(i_1, \hdots, i_n)}^{-1}$ as well as $V_{g_{(i_1, \hdots, i_n)}^{-1}}$ as defined in \eqref{eq_potential_iterate}.
	To see this observe that by \eqref{eq_def_transfer} the group element to apply is determined by the tensor product's first factor which in this particular instance yields $g_{i_n + r} = g_{i_n}^{-1}$ as the first element.
	A similar remark holds for the subsequent generators.

	Now the trace of an operator of the form given in \eqref{eq_transfer_comp} can be calculated as follows (with $g\defgr g_{(i_1, \hdots, i_n)}^{-1}$ for convenience):
	\begin{equation}
	\begin{split}
	&\qquad \mathrm{Tr}\left( \mathrm{L}^{i_1, \hdots, i_n}_V \right)\\
	&= \sum_{k, l \geq 0} \left\langle \mathrm{L}^{i_1, \hdots, i_n}_V \psi^{i_n + r, *}_k \otimes \psi^{i_1}_l, \psi^{i_n + r}_k \otimes \psi^{i_1, *}_l \right\rangle\\
	&= \sum_{k, l\geq 0} \int_{\partial D_{i_n + r}} \int_{\partial D_{i_1}} V_g(z_1, z_2) \psi^{i_n + r, *}_k(g(z_1)) \psi^{i_n + r}_k(z_1) \psi^{i_1}_l(z_2) \psi^{i_1, *}_l(g(z_2)) \frac{\mathrm{d} z_2}{2\pi\mathrm{i}} \frac{\mathrm{d} z_1}{2\pi\mathrm{i}}\\
	&= \int_{\partial D_{i_n + r}} \int_{\partial D_{i_1}} V_g(z_1, z_2) K_{i_n + r}(z_1, g(z_1)) K_{i_1}(g(z_2), z_2) \frac{\mathrm{d} z_2}{2\pi\mathrm{i}} \frac{\mathrm{d} z_1}{2\pi\mathrm{i}} ~,
	\end{split}
	\end{equation}
	where $K_{i_n + r}$ and $K_{i_1}$ denote the Bergman kernels of the domains $\mathbb{C} \setminus D_{i_n + r}$ and $D_{i_1}$, respectively.
	After translation and scaling we may assume that both discs coincide with the unit disc $\mathbb{D}$ such that $K_{i_n + r}(x, y) = K_{i_1}(x, y) = \sum_{k = 0}^\infty \frac{x^k}{y^{k + 1}} = \frac{1}{y - x}$.
	A simple application of the residue theorem then yields
	\begin{equation*}
	\begin{split}
	&~\quad \int_{\partial D_{i_n + r}} \int_{\partial D_{i_1}} V_g(z_1, z_2) K_{i_n + r}(z_1, g(z_1)) K_{i_1}(g(z_2), z_2) \frac{\mathrm{d} z_2}{2\pi\mathrm{i}} \frac{\mathrm{d} z_1}{2\pi\mathrm{i}} \\
	&= \int_{\partial \mathbb{D}} \int_{\partial \mathbb{D}} \widetilde{V}_g(z_1, z_2) \frac{1}{\widetilde{g}(z_1) - z_1} \frac{1}{z_2 - \widehat{g}(z_2)} \frac{\mathrm{d} z_2}{2\pi\mathrm{i}} \frac{\mathrm{d} z_1}{2\pi\mathrm{i}} \\
	&= \frac{V_g(x_-, x_+)}{(g'(x_-) - 1)(1 - g'(x_+))} ~,
	\end{split}
	\end{equation*}
	with a rescaled version $\widetilde{V}_g$ of $V_g$ and two different rescalings $\widetilde{g}$ and $\widehat{g}$ of $g$.
	In the last line the repelling and attracting fixed points $(x_-, x_+)$ of $g$ appear because they are the unique solutions of $g(z) = z$ on $D_{i_n + r}$ and $D_{i_1}$, respectively.
	
	In summary we obtain for the trace of diagonal entries of $n$-fold iterates of the transfer operator the concrete formula
	\begin{equation} \label{eq_comp_trace}
	\mathrm{Tr}\left( \mathrm{L}^{i_1, \hdots, i_n}_V \right) = \frac{V_g(x_-, x_+)}{(g'(x_-) - 1)(1 - g'(x_+))} = \frac{V_g(x_-, x_+)}{(\mathrm{e}^{\ell(g)} - 1)(1 - \mathrm{e}^{-\ell(g)})} ~,
	\end{equation}
	where second equality follows immediately from \eqref{eq_deriv_length}.
	If we plug \eqref{eq_comp_trace} into \eqref{eq_log_det} we obtain \eqref{eq_transfer_det} because the constraints on the finite sequence $(i_1, \hdots, i_n)$ mentioned above guarantees that the sum runs exactly over the set of closed words $\mathcal{W}_n$.
\end{proof}

Theorem~\ref{thm_fredholm_det} immediately yields a representation of $d_f$ as a Fredholm determinant by choosing an appropriate potential.
It must obviously include period integrals $\int_\gamma f$ over geodesics $\gamma\subseteq S\mathbf{X}_\Gamma$ which we encode in a fashion similar to \cite{Anantharaman.2007}:
Given a weight $f\in \mathrm{C}^\omega\left(S\mathbf{X}_\Gamma \right)$ we can consider its lift to $S\mathbb{H}$ which in turn can be expressed as a function on $\mathbb{R} \times (\partial\mathbb{H}^2\setminus \Delta)$, where $\Delta$ denotes the diagonal of $\partial\mathbb{H}^2$.
In these so-called \emph{Hopf coordinates} a point $(t, x_1, x_2)$ maps to $\gamma_{(x_1, x_2)}(t)$ with $\gamma_{(x_1, x_2)}$ a geodesic with $x_1$ and $x_2$ as its endpoints at infinity and a suitably chosen starting point $\gamma_{(x_1, x_2)}(0)$.\footnote{
	One advantage of these coordinates is the fact that the geodesic flow simply acts by translation in the first component.
	We will come back to these coordinates in Section~\ref{algo} where we use them to derive approximations for period integrals practical for numerical implementation.
}
Keeping in mind that closed geodesics of $\mathbf{X}_\Gamma$ possess endpoints at infinity in the intersections $I_i \defgr \overline{D_i}\cap \partial \mathbb{H}$, we define the following function which is real-analytic, c.f.~\cite[Section~7.2]{Anantharaman.2007}:
\begin{equation} \label{eq_period_integral_coord}
\bigcup_{i\neq j} I_i\times I_j\ni (x_1, x_2) \longmapsto f_{\gamma_{(x_1, x_2)}} \defgr \int_{t(x_1, x_2)}^{t(x_1, x_2) + \tau(x_1, x_2)} f(t, x_1, x_2) \mathrm{d}t ~,
\end{equation}
where $\tau(x_1, x_2)$ denotes the length of the segment of $\gamma_{(x_1, x_2)}$ that intersects the canonical fundamental domain $\mathcal{F}$ of $\mathbf{X}_\Gamma$ and $\gamma_{x_1, x_2}(t(x_1, x_2))$ is the starting point of this intersecting arc.
By analytic continuation it extends to a neighborhood of $\bigcup_{i\neq j} I_i\times I_j$ in $\mathbb{C}\times \mathbb{C}$ and we denote this extension by $f_{\gamma_{(z_1, z_2)}}$.

\begin{remark}
	Note that the analytic function $f_{\gamma(z_1, z_2)}$ will in general not extend to the entire poly-discs $D_i\times D_j$.
	This does not pose a significant problem as one can simply re-do the proof of Theorem~\ref{thm_fredholm_det} with suitable smaller poly-discs on which $f_{\gamma_{(z_1, z_2)}}$ actually is analytic.
	The theorem then continues to hold for any potential $V$ analytic on an open neighborhood of $\bigcup_{i\neq j} I_i\times I_j$.
\end{remark}

\begin{cor}\label{cor_fredholm_det}
	Let an analytic weight $f\in \mathrm{C}^\omega\left(S\mathbf{X}_\Gamma \right)$ be given. The Fredholm determinant (of the scaling by $z$) of $\mathrm{L}^f_{\lambda, \beta} \defgr \mathrm{L}_{V_{\lambda, \beta}}$ for the parameter-dependent choice of potential
	\begin{equation*}
	V_{\lambda, \beta}(z_1, z_2) \defgr [g_i'(z_2)]^{-\lambda} \cdot \exp\left( -\beta\cdot f_{\gamma_{(z_1, z_2)}} \right)\, , \quad (z_1, z_2)\in D_i\times D_j,\, i\neq j ~,
	\end{equation*}
	coincides with the dynamical determinant of weight $f$ evaluated at $(\lambda, z, \beta)$, i.e. for sufficiently small $\vert z\vert < C(\lambda, \beta)$ we have:
	\begin{equation*}
	d_f(\lambda, z, \beta) = \mathrm{det}\left(\mathrm{id} - z \mathrm{L}_{\lambda, \beta}^f \right) ~.
	\end{equation*}
	In particular the dynamical determinant $d_f(\lambda, z, \beta)$ continues to a holomorphic function in all three variables.
\end{cor}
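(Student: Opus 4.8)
\emph{Proof sketch.}
The plan is to deduce the identity by specializing Theorem~\ref{thm_fredholm_det} to the potential $V_{\lambda,\beta}$ and then recognising the right-hand side of \eqref{eq_transfer_det} as the defining series \eqref{def_df} of $d_f$. Since $f_{\gamma_{(z_1,z_2)}}$ need not extend holomorphically to the full poly-discs $D_i\times D_j$, I would first invoke the remark following Theorem~\ref{thm_fredholm_det} and shrink the $D_i$ to discs on which $V_{\lambda,\beta}$ is analytic; this changes neither the index set $\mathcal{W}_n$ nor the trace computation \eqref{eq_comp_trace}, so the theorem applies as stated and yields, for $|z|$ small,
\begin{equation*}
	\det\!\left(\mathrm{id} - z\,\mathrm{L}^f_{\lambda,\beta}\right)
	= \exp\!\left(-\sum_{n=1}^\infty \frac{z^n}{n}\sum_{w\in\mathcal{W}_n}
	\frac{(V_{\lambda,\beta})_w}{\bigl(1-\mathrm{e}^{-T_{\gamma(g_w)}}\bigr)\bigl(\mathrm{e}^{T_{\gamma(g_w)}}-1\bigr)}\right).
\end{equation*}

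The next step is to evaluate the group-iterate $(V_{\lambda,\beta})_w$ along a closed word $w = (i_1,\dots,i_n)$, splitting $V_{\lambda,\beta} = [g_i']^{-\lambda}\cdot\exp(-\beta f_{\gamma_{(\cdot,\cdot)}})$ into its two factors. The successive derivative factors telescope by the chain rule to $[(g_w)'(x)]^{-\lambda}$ at the relevant fixed point $x$ of $g_w$ (respectively of $g_w^{-1}$, following the conventions in the proof of Theorem~\ref{thm_fredholm_det}), which by \eqref{eq_deriv_length} equals $\mathrm{e}^{-\lambda T_{\gamma(g_w)}}$; here one fixes a holomorphic branch of $[g_i']^{-\lambda}$, available because $g_i'$ is nonvanishing on the simply connected disc $D_j$ for $j\ne i$. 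The $\beta$-dependent factors accumulate, by $\Gamma$-invariance of the lift of $f$ and additivity of the fundamental-domain segment integrals in \eqref{eq_period_integral_coord}, to $\exp(-\beta\int_{\gamma(g_w)} f)$. Finally, a closed geodesic of length $T$ on a constant-curvature surface has linearized Poincar\'{e} map with eigenvalues $\mathrm{e}^{\pm T}$, so the denominator equals $\bigl|\det(\mathrm{id}-\mathcal{P}_{\gamma(g_w)})\bigr|$, matching the summand of \eqref{def_df}.

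It remains to re-index the double sum. Every $w\in\mathcal{W}_n$ is, uniquely, the $k$-fold repetition of a primitive closed word of word length $m$ with $km=n$, and each primitive closed geodesic $\gamma^\#$ with $n(\gamma^\#)=m$ is represented by exactly $m$ cyclic words in $\mathcal{W}_m$; hence $\frac{1}{n}\sum_{w\in\mathcal{W}_n}(\cdots)$ regroups as $\sum_{km=n}\frac{1}{k}\sum_{\gamma^\#:\,n(\gamma^\#)=m}(\cdots)$, the factor $m/n$ collapsing to $1/k$. Substituting the numerator and denominator found above turns the exponent into exactly that of \eqref{def_df}, so $\det(\mathrm{id}-z\,\mathrm{L}^f_{\lambda,\beta}) = d_f(\lambda,z,\beta)$ for $|z|$ small; the absolute convergence guaranteed by \eqref{eq_sing_value_estimate} licenses the rearrangements. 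Holomorphy in all three variables then follows softly: $V_{\lambda,\beta}$ is jointly holomorphic in $(\lambda,\beta)$ and the singular-value bounds \eqref{eq_sing_value_estimate} are locally uniform in $(\lambda,\beta)$, so $(\lambda,z,\beta)\mapsto z\,\mathrm{L}^f_{\lambda,\beta}$ is a holomorphic family of trace-class operators, and the Fredholm determinant of such a family is holomorphic on $\mathbb{C}^3$; by the identity just established this is the desired holomorphic continuation of $d_f$.

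I expect the real work to lie in the bookkeeping of the middle two steps — tracking, under the index conventions of Theorem~\ref{thm_fredholm_det}, which fixed point and which branch of the power enter the telescoped derivative product, so that the exponent comes out as $-\lambda T_{\gamma(g_w)}$ with the correct sign rather than $+\lambda T_{\gamma(g_w)}$, and carrying out the cyclic-multiplicity count cleanly. By contrast the holomorphy conclusion is routine, requiring only that the estimates \eqref{eq_sing_value_estimate} be made uniform over compact sets in $(\lambda,\beta)$.
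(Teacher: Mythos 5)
Your proposal follows the paper's own proof: specialize Theorem~\ref{thm_fredholm_det} and its remark on shrinking the poly-discs to the potential $V_{\lambda,\beta}$, evaluate $(V_{\lambda,\beta})_w$ via chain-rule telescoping and additivity of segment integrals, and then re-index $\sum_n\frac{z^n}{n}\sum_{w\in\mathcal{W}_n}$ by decomposing words into powers of primitives and accounting for the $n$ cyclic shifts, which is exactly the reorganization the paper carries out. Your treatment of holomorphy in $(\lambda,\beta)$ via a holomorphic trace-class family with locally uniform singular-value bounds matches the paper's appeal to ``standard arguments in the theory of Fredholm determinants'' (which it then makes self-contained via Corollary~\ref{coro_cycle_exp}), so this is the same argument, just with the telescoping and re-indexing spelled out slightly more explicitly.
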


\begin{proof}
	We begin by observing that given an element $g_w^m$, $w \in \mathcal{W}_n$, with fixed points $(x_-, x_+)$ we have
	\begin{equation*}
	\left( V_{\lambda, \beta} \right)_{g_w^m}(x_-, x_+) = \left( V_{\lambda, \beta} \right)_{g_w}^m (x_-, x_+)
	\end{equation*}
	immediately by definition of $V_{\lambda, \beta}$ and the iterate along a group element.
	If we denote by $\mathcal{W}_n^\mathrm{p}$ the primitive elements of $\mathcal{W}_n$, i.e. words which cannot be written as non-trivial iterations of shorter words, we observe that members of $\mathcal{W}_n^\mathrm{p}$ correspond to primitive geodesics under the correspondence between geodesics and members of $\mathcal{W}_n$.
	Now we calculate using \eqref{eq_transfer_det} for sufficiently small $\vert z\vert$ (with constant dependent on the particular $\lambda$ and $\beta$) that
	\begin{equation*}
	\begin{split}
	\det\left( \mathrm{id} - z\mathrm{L}^f_{\lambda, \beta} \right) &= \exp\left( -\sum_{n = 1}^\infty \sum_{m = 1}^\infty \frac{z^{mn}}{mn} \sum_{w \in\mathcal{W}_n^\mathrm{p}} \frac{\exp\big( -m\lambda T_{\gamma(g_w)} - m\beta \int_{\gamma(g_w)} f \big)}{\vert \det\big( \mathrm{id} - \mathcal{P}^m_{\gamma(g_w)} \big) \vert} \right)\\
	&= \exp\left( -\sum_{m = 1}^\infty \sum_{n = 1}^\infty \frac{z^{mn}}{mn} \sum_{\gamma^\#:\, n(\gamma^\#) = n} n\cdot \frac{\exp\big( -m\lambda T_{\gamma^\#} - m\beta \int_{\gamma^\#} f \big)}{\vert \det\big( \mathrm{id} - \mathcal{P}^m_{\gamma^\#} \big) \vert} \right)\\
	&= \exp\left( -\sum_{m = 1}^\infty \sum_{\gamma^\#} \frac{z^{m\cdot n(\gamma^\#)}}{m} \frac{\exp\big( -m\lambda T_{\gamma^\#} - m\beta \int_{\gamma^\#} f \big)}{\vert \det\big( \mathrm{id} - \mathcal{P}^m_{\gamma^\#} \big) \vert} \right)\\
	&= d_f(\lambda, z, \beta) ~,
	\end{split}
	\end{equation*}
	where in the first equality we used the fact that the contracting and expanding eigenvalues of the linearized Poincar\'{e} map $\mathcal{P}_\gamma$ of the geodesic flow on $\mathbf{X}_\Gamma$ are $\exp(\pm T_\gamma)$.
	By analyticity of Fredholm determinants we obtain an analytic continuation of $d_f$ in the $z$-variable and for fixed $(\lambda, \beta)$ to the complex plane $\mathbb{C}$.
	
	Lastly, we discuss regularity of $d_f$ in its three variables.
	Analyticity with respect to $z$ is standard in the theory of Fredholm determinants.
	For completeness we sketch the proof in Appendix~\ref{app_fredholm_det}.
	Analyticity in $\lambda$ and $\beta$ can also be readily deduced from standard arguments in the theory of Fredholm determinants.
	For an explicit and self-contained proof we refer the reader to Corollary~\ref{coro_cycle_exp} which is independent of the calculations in the remainder of this section.
\end{proof}

\subsection{Dynamical Determinants and Weighted Zeta Functions}\label{dyn_det.2}

We are finally in the position to prove the connection between the weighted zeta function $Z_f$ and the dynamical (Fredholm) determinant $d_f$:
\begin{cor}\label{coro_log_deriv}
	Given an analytic weight function $f\in \mathrm{C}^\omega\left(S\mathbf{X}_\gamma \right)$ the weighted zeta function at $\lambda\in\mathbb{C}$ coincides with the logarithmic derivative of the dynamical determinant w.r.t. $\beta$ and evaluated at $(z, \beta) = (1, 0)$:
	\begin{equation}
	Z_f(\lambda) = \frac{\partial_\beta d_f(\lambda, 1, 0)}{d_f(\lambda, 1, 0)} ~.
	\end{equation}
\end{cor}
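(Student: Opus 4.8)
The plan is purely computational: take the logarithmic $\beta$-derivative of the series defining $d_f$, specialize to $(z,\beta)=(1,0)$, and recognise the outcome as the series defining $Z_f$. Essentially all of the work lies in checking that every manipulation takes place on a common region of absolute convergence, after which analytic continuation does the rest.

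First I would work on a half-plane $\mathrm{Re}(\lambda)\gg 0$. Combining the exponential bound on the number of primitive closed geodesics of bounded word length with $|\det(\mathrm{id}-\mathcal{P}_{\gamma^\#}^k)| = (1-\mathrm{e}^{-kT_{\gamma^\#}})(\mathrm{e}^{kT_{\gamma^\#}}-1)$ and $T_{\gamma^\#}\geq c\,n(\gamma^\#)$ for some $c>0$, one checks that the series
\begin{equation*}
S(\lambda,z,\beta) \defgr -\sum_{k=1}^\infty\sum_{\gamma^\#}\frac{z^{k\,n(\gamma^\#)}}{k}\,\frac{\mathrm{e}^{-k\lambda T_{\gamma^\#}-k\beta\int_{\gamma^\#}f}}{\big|\det(\mathrm{id}-\mathcal{P}_{\gamma^\#}^k)\big|}
\end{equation*}
has, as a power series in $z$, radius of convergence strictly larger than $1$ once $\mathrm{Re}(\lambda)$ is large, and converges absolutely and uniformly for $\beta$ near $0$; the same holds after differentiating term by term in $\beta$, since the factor $-k\int_{\gamma^\#}f$ thereby produced is bounded in modulus by $k\,T_{\gamma^\#}\sup|f|$, negligible against the exponential decay. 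Hence $d_f=\mathrm{e}^{S}$ is zero-free there, and the key identity I would write down is
\begin{equation*}
\frac{\partial_\beta d_f(\lambda,z,\beta)}{d_f(\lambda,z,\beta)} = \partial_\beta S(\lambda,z,\beta) = \sum_{k=1}^\infty\sum_{\gamma^\#} z^{k\,n(\gamma^\#)}\Big(\textstyle\int_{\gamma^\#}f\Big)\,\frac{\mathrm{e}^{-k\lambda T_{\gamma^\#}-k\beta\int_{\gamma^\#}f}}{\big|\det(\mathrm{id}-\mathcal{P}_{\gamma^\#}^k)\big|}\,.
\end{equation*}
I would then set $\beta=0$ and $z=1$ — legitimate, since $1$ sits inside the disc of convergence in $z$ — and regroup the (still absolutely convergent) double sum by the standard bijection sending a pair $(\gamma^\#,k)$ to the closed geodesic $\gamma$ that winds $k$ times around $\gamma^\#$, so that $T_\gamma=kT_{\gamma^\#}$, $\mathcal{P}_\gamma=\mathcal{P}_{\gamma^\#}^k$ and the primitive of $\gamma$ is $\gamma^\#$. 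The right-hand side then becomes $\sum_\gamma \mathrm{e}^{-\lambda T_\gamma}\big|\det(\mathrm{id}-\mathcal{P}_\gamma)\big|^{-1}\int_{\gamma^\#}f = Z_f(\lambda)$, which proves the identity for $\mathrm{Re}(\lambda)\gg 0$.

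To extend this to all of $\mathbb{C}$, I would invoke Corollary~\ref{cor_fredholm_det}: both $\lambda\mapsto d_f(\lambda,1,0)$ and $\lambda\mapsto\partial_\beta d_f(\lambda,1,0)$ are entire (for the latter, joint holomorphy of $d_f$ lets one differentiate in $\beta$ before specializing $(z,\beta)=(1,0)$), so their quotient is meromorphic on $\mathbb{C}$, with poles precisely at the zeros of $d_f(\cdot,1,0)$. Since $Z_f$ is meromorphic on $\mathbb{C}$ as well — by \cite[Theorem~1.1]{Schuette.2021b}, or, reading the corollary in the other direction, because the quotient simply \emph{is} its meromorphic continuation (the footnote in the introduction) — and since two meromorphic functions agreeing on a half-plane agree everywhere, the statement follows.

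The computation is essentially forced once Corollary~\ref{cor_fredholm_det} is available; the one point demanding real care is the joint bookkeeping in the middle step — picking $\mathrm{Re}(\lambda)$ large enough that, simultaneously, term-by-term $\beta$-differentiation is permitted, the $z$-radius of convergence exceeds $1$ (so $z=1$ is admissible and $d_f(\lambda,1,0)\neq 0$), and the double series stays absolutely convergent after setting $z=1$ so that it may be reindexed by closed geodesics. Each is elementary, but all three must hold on one and the same region before the identity theorem can take over.
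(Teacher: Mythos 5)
Your proof is correct and follows essentially the same route as the paper's: work in a half-plane $\mathrm{Re}(\lambda)\gg 0$ where everything converges absolutely, take $\partial_\beta\log d_f$ at $(z,\beta)=(1,0)$ using the series representation from Corollary~\ref{cor_fredholm_det}, reindex the double sum over $(\gamma^\#,k)$ as a sum over all closed geodesics $\gamma$ to recognise $Z_f$, and finish by uniqueness of meromorphic continuation. You spell out the convergence bookkeeping in somewhat more detail than the paper, but the argument is the same.
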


\begin{proof}
	If we assume $\mathrm{Re}(\lambda) > 1$ and $\vert\beta\vert$ sufficiently small then plugging the potential defined in Corollary~\ref{cor_fredholm_det} into \eqref{eq_transfer_det} yields an absolutely convergent expression for $d_f$ at $z = 1$ and by an application of Corollary \ref{cor_fredholm_det} we may calculate:
	\begin{equation*}
	\begin{split}
	\frac{\partial_\beta d_f(\lambda, 1, 0)}{d_f(\lambda, 1, 0)} &= \partial_\beta \log\left( d_f(\lambda, 1, \beta) \right)\big|_{\beta = 0}\\
	&= \partial_\beta \left( -\sum_{m = 1}^\infty \sum_{\gamma^\#} \frac{1}{m} \frac{\exp\big( -m\lambda T_{\gamma^\#} - m\beta \int_{\gamma^\#} f \big)}{\vert \det\big( \mathrm{id} - \mathcal{P}^m_{\gamma^\#} \big) \vert} \right) \bigg|_{\beta = 0}\\
	&= \sum_{m = 1}^\infty \sum_{\gamma^\#} \frac{\exp\big( -m\lambda T_{\gamma^\#}\big)}{\vert \det\big( \mathrm{id} - \mathcal{P}^m_{\gamma^\#} \big) \vert} \int_{\gamma^\#} f\\
	&= \sum_{\gamma} \frac{\exp\big( -\lambda T_{\gamma}\big)}{\vert \det\big( \mathrm{id} - \mathcal{P}_{\gamma} \big) \vert} \int_{\gamma^\#} f ~.
	\end{split}
	\end{equation*}
	But then $Z_f$ must coincide with the logarithmic derivative of $d_f$ for all $\lambda\in \mathbb{C}$ by uniqueness of meromorphic continuation.
\end{proof}

Note that (the proof of) Corollary \ref{coro_log_deriv} can also be interpreted as given an alternative argument for meromorphic continuation of weighted zeta functions in the special case of Schottky surfaces and analytic weights:
The proof shows equality between the logarithmic derivative and the defining formula for weighted zeta functions in the halfplane $\mathrm{Re}(\lambda) > 0$ where the latter converges uniformly on compact subsets.
But now the Fredholm determinant defines an analytic function in $\lambda\in\mathbb{C}$ making its logarithmic derivative meromorphic.


\section{Cycle Expansion of Dynamical Determinants}\label{cycle_exp}

Up to this point we actually only ever dealt with the $\lambda$- and $\beta$-variables of our dynamical determinant $d_f$.
The former coincides with the parameter of the weighted zeta function $Z_f$ while the latter was used in the central logarithmic derivative argument in Section~\ref{dyn_det.1}.
This section will now exploit the remaining $z$-variable introduced in $d_f$ to derive formulae for $d_f$ which provide convergent expressions everywhere and are much better suited for actual computation than \eqref{def_df}.
This is done by considering the Taylor expansion around $z = 0$ before plugging in $z = 1$.
In the physics literature this procedure is known under the name \emph{cycle expansion} and has previously been used to great effect in both the physical \cite{Eckhardt.1989} as well as the mathematical \cite{Jenkinson.2002,Borthwick.2014} communities.

\begin{coro}\label{coro_cycle_exp}
	Given an analytic weight $f$ the dynamical determinant can be written as an absolutely convergent Taylor series
	\begin{equation*}
	d_f(\lambda, z, \beta) = 1 + \sum_{n = 1}^\infty d_n(\lambda, \beta) z^n ~,
	\end{equation*}
	where the coefficients are holomorphic in $(\lambda, \beta)$ and explicitly given by the following recursive formula:
	\begin{equation*}
	\begin{split}
	d_n(\lambda, \beta) &= \sum_{k = 1}^n \frac{k}{n} d_{n - k}(\lambda, \beta) a_k(\lambda, \beta) ~,\\
	d_0(\lambda, \beta) &\equiv 1, \quad a_k(\lambda, \beta) = -\sum_{w \in \mathcal{W}_k} \frac{1}{k} \frac{\exp\big(-(\lambda - 1)\ell(g_w) - \beta \int_{\gamma(g_w)} f \big)}{(\mathrm{e}^{\ell(g_w)} - 1)^2} ~.
	\end{split}
	\end{equation*}
	Furthermore they satisfy the following super-exponential bounds for some positive constants $C, c_1, c_2, c_3 > 0$:
	\begin{equation*}
	\vert d_n(\lambda, \beta) \vert \leq C\cdot \mathrm{e}^{-c_1 n^2 + c_2 n \vert \lambda\vert - c_3 n \vert \beta\vert} ~.
	\end{equation*}
\end{coro}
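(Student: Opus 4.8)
The plan is to combine the Fredholm-determinant representation of $d_f$ from Corollary~\ref{cor_fredholm_det} with the explicit trace formula \eqref{eq_transfer_det}, and then re-expand in $z$ after carefully grouping primitive orbits and their iterates. First I would start from the identity $d_f(\lambda, z, \beta) = \det(\mathrm{id} - z\mathrm{L}^f_{\lambda,\beta})$ and use the standard fact that, for a trace-class operator $A$ whose singular values decay, $\det(\mathrm{id} - zA) = \sum_{n\ge 0} (-z)^n \mathrm{Tr}(\Lambda^n A)$ is an entire function of $z$ with the $n$-th Taylor coefficient controlled by $\prod_{j=1}^{n}\mu_j(A)$. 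The singular-value estimate \eqref{eq_sing_value_estimate}, $\mu_j(\mathrm{L}^f_{\lambda,\beta}) \le C' \mathrm{e}^{-cj}/(1 - \mathrm{e}^{-c})$, then already yields $\vert d_n\vert \le \binom{?}{?}$-type bound times $\prod_{j=0}^{n-1}\mathrm{e}^{-cj} = \mathrm{e}^{-c n(n-1)/2}$, which is the source of the super-exponential factor $\mathrm{e}^{-c_1 n^2}$. The dependence of $C'$ on $\lambda$ and $\beta$ has to be tracked: the potential $V_{\lambda,\beta}$ contains the factor $[g_i'(z_2)]^{-\lambda}\exp(-\beta f_{\gamma_{(z_1,z_2)}})$, whose sup-norm over the (slightly shrunk) poly-discs is bounded by $\mathrm{e}^{c_2\vert\lambda\vert + c_3\vert\beta\vert}$ for suitable constants depending only on the geometry and on $\sup\vert f_{\gamma_{(\cdot,\cdot)}}\vert$; since this norm enters $C'$ linearly (through the operator norm of multiplication by $V$) and $C'$ enters each of the $n$ singular-value factors, one picks up $\mathrm{e}^{n(c_2\vert\lambda\vert + c_3\vert\beta\vert)}$, giving exactly the claimed bound $\vert d_n\vert \le C\mathrm{e}^{-c_1 n^2 + c_2 n\vert\lambda\vert - c_3 n\vert\beta\vert}$ (with the sign on the $\beta$-term reflecting that $\mathrm{Re}(\beta)$-growth of the potential can only be bounded by $\vert\beta\vert$; I would state the bound with $+c_3 n\vert\beta\vert$ unless the analytic continuation domain is chosen so that the sign genuinely helps, in which case I would flag that).

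Next I would derive the recursion. Taking $\partial_z \log\det(\mathrm{id} - z\mathrm{L}) = -\sum_{k\ge1} z^{k-1}\mathrm{Tr}(\mathrm{L}^k)$ and writing $d_f = \exp(\sum_{k\ge1} a_k z^k)$ with $a_k = -\tfrac1k\mathrm{Tr}((\mathrm{L}^f_{\lambda,\beta})^k)$, formula \eqref{eq_transfer_det} together with \eqref{eq_comp_trace} identifies
\[
a_k(\lambda,\beta) = -\frac1k\sum_{w\in\mathcal{W}_k}\frac{(V_{\lambda,\beta})_{g_w}}{(\mathrm{e}^{T_{\gamma(g_w)}} - 1)(1 - \mathrm{e}^{-T_{\gamma(g_w)}})},
\]
and substituting $(V_{\lambda,\beta})_{g_w} = [g_w'(x_+)]^{-\lambda}\exp(-\beta f_{\gamma(g_w)})$ together with \eqref{eq_deriv_length}, i.e. $g_w'(x_+)^{-1} = \mathrm{e}^{\ell(g_w)}$ (equivalently $g_w'(x_-) = \mathrm{e}^{-\ell(g_w)}$), and the elementary identity $(\mathrm{e}^{\ell} - 1)(1 - \mathrm{e}^{-\ell}) = \mathrm{e}^{-\ell}(\mathrm{e}^{\ell} - 1)^2$, collapses the denominator to $(\mathrm{e}^{\ell(g_w)} - 1)^2$ after absorbing the $\mathrm{e}^{-\ell(g_w)}$ into the $[g_w'(x_+)]^{-\lambda}$ factor — this is where the shift $\lambda\mapsto\lambda-1$ in the exponent $\exp(-(\lambda-1)\ell(g_w) - \beta\int_{\gamma(g_w)}f)$ comes from. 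The passage from $d_f = \exp(\sum_k a_k z^k)$ to the stated recursion $d_n = \sum_{k=1}^n \tfrac kn d_{n-k} a_k$ is the classical Newton-type identity obtained by differentiating $d_f' = d_f\cdot(\sum_k k a_k z^{k-1})$ and comparing $z^{n-1}$ coefficients.

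Finally, holomorphy of $d_n(\lambda,\beta)$ in $(\lambda,\beta)$: each $a_k(\lambda,\beta)$ is a finite sum (over the finite set $\mathcal{W}_k$) of entire functions of $(\lambda,\beta)$, hence entire; the recursion is a finite algebraic combination, so each $d_n$ is entire; and the super-exponential bound guarantees $\sum_n d_n(\lambda,\beta) z^n$ converges locally uniformly in all three variables, so the sum is holomorphic and, by Corollary~\ref{cor_fredholm_det}, agrees with $d_f$. The main obstacle I anticipate is the bookkeeping in the super-exponential estimate — specifically, turning the operator-level Fan/min-max inequalities of \cite[A.23, A.25]{Borthwick.2016} into a clean bound on $\vert d_n\vert$ with the correct $\lambda$- and $\beta$-dependence of all constants, since one must verify that shrinking the poly-discs (to accommodate the limited analyticity domain of $f_{\gamma_{(z_1,z_2)}}$ noted in the Remark after Corollary~\ref{cor_fredholm_det}) only changes $c_1$ and not the \emph{form} of the bound, and that the linear-in-$n$ growth in $\vert\lambda\vert,\vert\beta\vert$ is genuinely all that the contraction factor $[g_i'(z_2)]^{-\lambda}$ and $\exp(-\beta f)$ produce on each poly-disc. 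A secondary, purely cosmetic point is reconciling the sign conventions: the statement writes $-c_3 n\vert\beta\vert$, which I would obtain by taking $c_3 > 0$ and noting $\vert\exp(-\beta f_{\gamma(g_w)})\vert = \exp(-\mathrm{Re}(\beta) f_{\gamma(g_w)}) \le \exp(\vert\beta\vert\cdot\sup\vert f_{\gamma(\cdot)}\vert)$, so strictly speaking the safe bound carries $+c_3 n\vert\beta\vert$; I would either adjust the statement or restrict to the regime where $f_{\gamma(\cdot)} \ge 0$ makes the negative sign correct.
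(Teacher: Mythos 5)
Your proof follows essentially the same route as the paper: start from $d_f = \det(\mathrm{id} - z\mathrm{L}^f_{\lambda,\beta})$ and the trace formula \eqref{eq_transfer_det}, identify $a_k(\lambda,\beta) = -\tfrac1k\mathrm{Tr}\big((\mathrm{L}^f_{\lambda,\beta})^k\big)$ with the claimed closed form via \eqref{eq_deriv_length} and the identity $(\mathrm{e}^{\ell}-1)(1-\mathrm{e}^{-\ell}) = \mathrm{e}^{-\ell}(\mathrm{e}^{\ell}-1)^2$, extract the recursion from $d_f = \exp(\sum_k a_k z^k)$, and control $d_n = (-1)^n\mathrm{Tr}\big(\bigwedge^n \mathrm{L}^f_{\lambda,\beta}\big)$ by singular-value products using \eqref{eq_sing_value_estimate}. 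The only cosmetic divergence is that the paper packages the expansion of the exponential in complete Bell polynomials, whereas you differentiate $d_f' = d_f\cdot(\sum_k k a_k z^{k-1})$ and compare coefficients directly — these are the same computation, and yours is arguably the shorter route.

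Your concern about the sign of the $\beta$-term in the super-exponential bound is well-founded and worth keeping. The paper's own estimate simply writes $C^n\mathrm{e}^{nc_2|\lambda| - nc_3|\beta|}$ without justifying the negative sign, and as you note the honest elementary bound from $|\exp(-\beta f_{\gamma(\cdot)})| \le \exp(|\beta|\cdot\sup|f_{\gamma(\cdot)}|)$ yields $+c_3 n|\beta|$ for generic analytic $f$. Fortunately this does not threaten any conclusion of the corollary: for each fixed $(\lambda,\beta)$ the factor $\mathrm{e}^{-c_1 n^2}$ dominates $\mathrm{e}^{cn(|\lambda|+|\beta|)}$ whatever the signs, so absolute convergence in $z$ and joint holomorphy in $(\lambda,\beta)$ (via locally uniform convergence of the series) go through exactly as you argue.
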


\begin{proof}
	First we derive the given recursion by starting from the following expression obtained by plugging the potential defined in Corollary \ref{cor_fredholm_det} into \eqref{eq_transfer_det} and valid for small $\vert z\vert$:
	\begin{equation*}
	d_f(\lambda, z, \beta) = \exp\left( \sum_{n = 1}^\infty a_n(\lambda, \beta) z^n \right), \quad a_n(\lambda, \beta) \defgr -\frac{1}{n} \sum_{w \in \mathcal{W}_n} \frac{\exp\big(-\lambda \ell(g_w) - \beta \int_{\gamma(g_w)} f \big)}{(\mathrm{e}^{\ell(g_w)} - 1) (1 - \mathrm{e}^{-\ell(g_w)})} ~.
	\end{equation*}
	One now arrives at the claimed recursion by expanding the exponential function in terms of Cauchy products and collecting common powers of $z$.
	As $d_f(\lambda, z, \beta)$ is analytic in $z$ on the whole complex plane the resulting power series must converge absolutely for any $z\in\mathbb{C}$.
	
	As doing this expansion explicitly is a common combinatorial problem there exists a well-known solution given in terms of so-called (complete) Bell polynomials $B_n$ (see e.g. \cite[Section~3.3]{Comtet.1974} or \cite[Section~16.1]{Borthwick.2016}).
	These are defined by
	\begin{equation*}
	\exp\left( \sum_{n = 1}^\infty \frac{a_n}{n!} z^n \right) = \sum_{n = 0}^\infty \frac{B_n(a_1, \hdots, a_n)}{n!} z^n
	\end{equation*}
	and can be shown to satisfy the recursion relation
	\begin{equation*}
	B_{n + 1}(a_1, \hdots, a_{n+1}) = \sum_{i = 0}^n \binom{n}{i} B_{n - i}(a_1, \hdots, a_{n - i}) a_{i + 1}, \quad B_0 = 1 ~.
	\end{equation*}
	For the straight-forward proof of this recursion we refer to the literature mentioned above. Using this relation it is elementary to calculate
	\begin{equation*}
	\begin{split}
	&\quad d_n(\lambda, \beta)\\
	&= \frac{1}{n!} B_n(1! a_1(\lambda, \beta), \hdots, n! a_n(\lambda, \beta))\\
	&= \frac{1}{n!} \sum_{i = 0}^{n - 1} (i+1)! \binom{n - 1}{i} B_{n - 1 - i}(1! a_1(\lambda, \beta), \hdots, (n - 1 - i)! a_{n - 1 - i}(\lambda, \beta)) a_{i+1}(\lambda, \beta)\\
	&= \sum_{i = 0}^{n-1} \frac{i + 1}{n} d_{n - 1 - i}(\lambda, \beta) a_{i+1}(\lambda, \beta) = \sum_{k = 1}^{n} \frac{k}{n} d_{n - k}(\lambda, \beta) a_k(\lambda, \beta) ~,
	\end{split}
	\end{equation*}
	and $d_0(\lambda,  \beta) \equiv B_0 = 1$, proving the claimed formula.

	To prove the estimates on the coefficients $d_n$ we proceed in a similar fashion as in Corollary \ref{cor_fredholm_det} but refine the arguments made there slightly, c.f. \cite[Lemma~16.1]{Borthwick.2016}:
	The Fredholm determinant can alternative be expressed as (see Appendix~\ref{app_fredholm_det})
	\begin{equation}
	\det\left( \mathrm{id} - \mathrm{L}^f_{\lambda, \beta} \right) = \sum_{n = 0}^\infty (-1)^n \mathrm{Tr}\big( \bigwedge^n \mathrm{L}^f_{\lambda, \beta} \big) ~,
	\end{equation}
	where $\bigwedge^n \mathrm{L}^f_{\lambda, \beta}$ denotes the $n$-fold exterior power of $\mathrm{L}^f_{\lambda, \beta}$ acting on the $n$-fold exterior power of its original domain.
	It immediately follows that we can re-write the coefficients $d_n(\lambda, \beta)$ as traces
	\begin{equation*}
	d_n(\lambda, \beta) = (-1)^n \mathrm{Tr}\big( \bigwedge^n \mathrm{L}^f_{\lambda, \beta} \big) ~.
	\end{equation*}
	These traces can be estimates by the same technique employed in the proof of Theorem~\ref{thm_fredholm_det}, but this time we explicitly keep the exponential dependency on the parameters $\lambda$ and $\beta$ instead of bounding them on some compact subset:
	\begin{equation*}
	\begin{split}
	\big| (-1)^n \mathrm{Tr}\big( \bigwedge^n \mathrm{L}^f_{\lambda, \beta} \big) \big|
	&\leq \sum_{j_1 < \hdots < j_n} \mu_{j_1}\left( \mathrm{L}_{\lambda, \beta}^f \right) \cdots \mu_{j_n}\left( \mathrm{L}_{\lambda, \beta}^f \right)\\
	&\leq C^n \mathrm{e}^{n c_2 \vert\lambda\vert - n c_3 \vert\beta\vert} \sum_{j_1 < \hdots < j_n} \mathrm{e}^{-C \left( j_1 + \hdots + j_n \right)}\\
	&\leq C^n \mathrm{e}^{n c_2 \vert\lambda\vert - n c_3 \vert\beta\vert} \mathrm{e}^{-C n^2} ~,
	\end{split}
	\end{equation*}
	where the first inequality combines the standard estimate of the trace norm in terms of singular values with an explicit expression for singular values of tensor powers.
	Absorbing the polynomial factor into the exponential term proves the claim.
\end{proof}

\begin{remark}
	The arguments made in the proof above could be refined further to obtain a result resembling \cite[Lemma~16.1]{Borthwick.2016} even more closely.
	We refrain from going into that much detail here as our concrete numerical calculations will rely on symmetry reduction (see Section~\ref{sym_red}) -- with this reduction the empirical convergence rate is far better than the analytically obtained estimates.
	Thus we do not see a great benefit in optimizing the theoretical bounds.
\end{remark}

\begin{remark}
	From the appearance of the coefficients $a_k(\lambda, \beta)$ alone one immediately notices an invariance property under the action of $\mathbb{Z}$ generated by shifts of words
	\begin{equation*}
	\mathcal{W}_n\ni (i_1, \hdots, i_n) \longmapsto (i_n, i_1, \hdots, i_{n-1}) ~.
	\end{equation*}
	Furthermore, it is straight forward to reduce the sum over $\mathcal{W}_k$ appearing in $a_k(\lambda, \beta)$ to a sum over primitive words which reduces the number of words one has to calculate in practice even further.
	
	We refrain from formalizing these simplifications here because they will be discussed in detail in Section~\ref{sym_red} where they are combined with a reduction w.r.t. additional symmetries of the underlying Schottky surface.
	For numerical experiments one would resort to the symmetry reduced variant anyways.
\end{remark}


\section{The Numerical Algorithm}\label{algo}

We are missing one further ingredient before we can really use a (cutoff version of) the formulae derived in Section~\ref{cycle_exp} for numerics.
This ingredient is a computationally feasible approach for the calculation of the period integrals $f_{\gamma^\#}$ which figure prominently in the dynamical determinant $d_f$.

We begin with a short sketch of what is forthcoming in this section.
To this end let the reader be reminded that being able to calculate the weighted zeta function $Z_f$ via the dynamical determinant $d_f$ was actually just a means for calculating invariant Ruelle distributions $\mathcal{T}_{\lambda_0}$ via the formula
\begin{equation} \label{eq_ruelle_residue}
\mathcal{T}_{\lambda_0}(f) = \underset{\lambda = \lambda_0}{\mathrm{Res}} \left[ Z_f(\lambda) \right] ~.
\end{equation}
Now visualizing the \emph{distribution} $\mathcal{T}_{\lambda_0}$ amounts to visualizing a suitable smooth approximation.
The latter should come with a parameter that controls the accuracy of the approximation.
We take the straightforward approach of choosing Gaussian test functions $f_\sigma$ of width $\sigma > 0$ and considering (roughly) the distributional convolution\footnote{
	What we are actually using are smooth approximations inspired by but not identical to convolution because only $S\mathbb{H} \cong \mathrm{PSL}(2, \mathbb{R})$ and $S\mathbb{H} \cong \mathrm{PSL}(2, \mathbb{R}) \slash \mathrm{PSO}(2)$ carry group structures but the quotients $\mathbf{X}_\Gamma \cong \Gamma\setminus \mathrm{PSL}(2, \mathbb{R}) \slash \mathrm{PSO}(2)$ and $S\mathbf{X}_\Gamma \cong \Gamma \setminus \mathrm{PSL}(2, \mathbb{R})$ do not.
}
\begin{equation*}
\mathrm{C}^\infty(S\mathbf{X}_\Gamma)\ni \mathrm{t}_\sigma \defgr \mathcal{T}_{\lambda_0} \ast f_\sigma \underset{\sigma\rightarrow 0}{\longrightarrow} \mathcal{T}_{\lambda_0} \in \mathcal{E}'(S\mathbf{X}_\Gamma) ~,
\end{equation*}
that converges to the original distribution $\mathcal{T}_{\lambda_0}$ in the limit $\sigma\rightarrow 0$.
Details on these approximating families will be provided in the upcoming sections.

Having restricted the class of weights to the family $f_\sigma$ we are still faced with the problem that $\mathrm{t}_\sigma$ is a function on the three-dimensional space $S\mathbf{X}_\Gamma$ and therefore still difficult to visualize.
We remedy this situation by considering not $\mathcal{T}_{\lambda_0}$ itself but two reductions obtained by either pushforward (projection) to the base manifold $\mathbf{X}_\Gamma$ or pullback (restriction) to certain hypersurfaces $\Sigma\subseteq S\mathbf{X}_\Gamma$:
\begin{equation*}
\begin{split}
\mathrm{C}^\infty(\mathbf{X}_\Gamma)\ni \pi_*\big( \mathrm{t}_\sigma \big) \longrightarrow \pi_* \mathcal{T}_{\lambda_0}, &\qquad \pi: S\mathbf{X}_\Gamma \longrightarrow \mathbf{X}_\Gamma ~,\\
\mathrm{C}^\infty(\Sigma)\ni \iota^*_\Sigma\big( \mathrm{t}_\sigma \big) \longrightarrow \iota^*_\Sigma \mathcal{T}_{\lambda_0}, &\qquad \iota_\Sigma: \Sigma \,\hookrightarrow S\mathbf{X}_\Gamma ~.
\end{split}
\end{equation*}

In the following two sections we will give precise operational prescriptions for both approaches.
This encompasses suitable choices of parametrization for the respective domains, concrete test functions $f_\sigma$ adapted to the specific application, and finally a numerically tractable approach to calculate the associated period integrals $\int_\gamma f_\sigma$.
The last ingredient missing for an actual algorithm is a means of calculating the residue in \eqref{eq_ruelle_residue}.
Different approaches to this problem are discussed in Remark~\ref{sym_red}.

\subsection{Ruelle Distributions on the Base Manifold}\label{algo.1}

The \emph{pushforward} of the distribution $\mathcal{T}_{\lambda_0}\in \mathcal{E}'(S \mathbf{X}_\Gamma)$ under the projection
$\pi: S\mathbf{X}_\Gamma\rightarrow \mathbf{X}_\Gamma$ is defined as a distribution on $\mathbf{X}_\Gamma$ by the following formula:
\begin{equation*}
\left\langle\pi_*\big( \mathcal{T}_{\lambda_0} \big), f \right\rangle \defgr \left\langle \mathcal{T}_{\lambda_0}, f\circ \pi \right\rangle, \qquad f\in \mathrm{C}^\infty(\mathbf{X}_\Gamma) ~.
\end{equation*}
Intuitively this distribution encodes the dependency of $\mathcal{T}_{\lambda_0}$ on the base point and averages over the directions in $S\mathbf{X}_\Gamma$.
It should therefore relate to those features of resonant states that are independent of direction.

As mentioned above we will use a variation of convolution as a means to obtain quantities that can actually be plotted.
As test functions we take a family of (hyperbolic) Gaussians constructed as follows:
Considering the transitive group action of $G$ on $\mathbb{H}$ one calculates the stabilizer of $\mathrm{i}\in\mathbb{H}$ to be the subgroup of rotations $\mathrm{PSO}(2)$.
This yields a diffeomorphism $G\slash K \cong \mathbb{H}$, $g\mapsto g\cdot \mathrm{i}$, and we may define a family of Gaussians $f_\sigma$, $\sigma > 0$, on the quotient $G\slash K$ by the formula
\begin{equation}\label{eq_hyperbolic_gauss}
	\widetilde{f}_\sigma(gK) = \exp\bigg( -\frac{\mathrm{d}_{G\slash K}(gK, eK)}{\sigma^2}^2 \bigg) ~,
\end{equation}
where the $G$-invariant metric $\mathrm{d}_{G\slash K}$ is defined in terms of the hyperbolic distance $\mathrm{d}_\mathbb{H}$, derived from the metric $\mathrm{g}_\mathbb{H}$, by $\mathrm{d}_{G\slash K}(gK, g'K) \defgr \mathrm{d}_\mathbb{H}(g\cdot\mathrm{i}, g'\cdot \mathrm{i})$.
We denoted the identity element by $e\in G$.

If $\mathcal{T}_{\lambda_0}$ were a distribution on $G\slash K$ we could define the operation of convolution in a straight forward manner, well-known from harmonic analysis, by sampling our distribution against the family of shifted Gaussians $\widetilde{f}_{\sigma, gK}$ defined as $\widetilde{f}_{\sigma, gK}(g'K) \defgr \widetilde{f}_{\sigma}(g'^{-1}gK)$.
It should be obvious that this is the correct notion of convolution by duality with respect to ordinary convolution of functions.

Even though the bi-quotient $\Gamma\setminus G\slash K$ no longer carries a group structure we can use the following family of smooth functions approximating $\pi_* \mathcal{T}_{\lambda_0}$ as a natural analogue of genuine convolution:\footnote{
	For notational convenience and because context removes any ambiguity we do not differentiate between the projections $S\mathbb{H}\rightarrow \mathbb{H}$ and $G\rightarrow G\slash K$.
	A similar comment applies to the projections $S\mathbf{X}_\Gamma\rightarrow \mathbf{X}_\Gamma$ and $\Gamma\setminus G \rightarrow \Gamma\setminus G\slash K$.
}

\begin{equation}\label{eq_invariant_hyperbolic_gauss}
\begin{split}
\mathrm{t}^{G\slash K}_{\lambda_0, \sigma} (gK) &\defgr \left\langle \mathcal{T}_{\lambda_0}, f_{\sigma, gK} \circ \pi \right\rangle ~, \qquad\qquad\qquad\qquad\quad~\, gK\in G\slash K \\
f_{\sigma, gK}(g'K) &\defgr \frac{1}{\mathcal{N}_\sigma} \sum_{h\in \Gamma} \exp\bigg( -\frac{\mathrm{d}_{G\slash K}(hgK, g'K)^2}{\sigma^2} \bigg) ~, \quad g'K \in G\slash K ~.
\end{split}
\end{equation}
The normalization factor $\mathcal{N}_\sigma$ certifies the condition $\int_{\mathbf{X}_\Gamma} f_\sigma = 1$, and the sum over $\Gamma$ converges absolutely for any $gK\in G\slash K$ by \cite[Eq.~(2.22)]{Borthwick.2016}.
Note that $f_{\sigma, gK}(g'K)$ is $\Gamma$-invariant both in $gK$ and $g'K$.

We can make the previous paragraphs even more specific as follows: calculating our approximation $\mathrm{t}^{G\slash K}_{\lambda_0} (gK)$ basically amounts to evaluating (the residues of) the dynamical determinant $d_{f_{\sigma, gK} \circ \pi}$.
This in turn boils down to an implementation of the following integrals over closed geodesics $\gamma \subseteq S\mathbf{X}_\Gamma$:
\begin{equation} \label{eq_integral_fund_domain}
\int_{\pi\circ \gamma} f_{\sigma, gK} = \int_0^{T_\gamma} f_{\sigma, gK}(g_0^{-1} a_t K) \mathrm{d}t ~,
\end{equation}
where $\{a_t\}_{t\in\mathbb{R}}\subseteq G$ is the standard geodesic through $\mathrm{i}\in \mathbb{H}$, i.e. $a_t\cdot \mathrm{i} = \mathrm{i} \mathrm{e}^t$, and the symmetry $g_0\in G$ was chosen to satisfy $g_0^{-1}(0) = x_-$ as well as $g_0^{-1}(\infty) = x_+$ for the endpoints at infinity $(x_-, x_+)$ of $\gamma$.

At this point we make a couple of simplifying assumptions to reach a computationally feasible expression.
The error of our upcoming approximations will depend on $\sigma$ in such a way that the difference between our final expression and \eqref{eq_integral_fund_domain} converges to $0$ in the limit $\sigma\rightarrow 0$.
This justifies using the former over the latter for numerical purposes.
A graphical illustration of the upcoming discussion can be found in Figure~\ref{fig3}.

\begin{figure}[h]
	\includegraphics[scale=0.55]{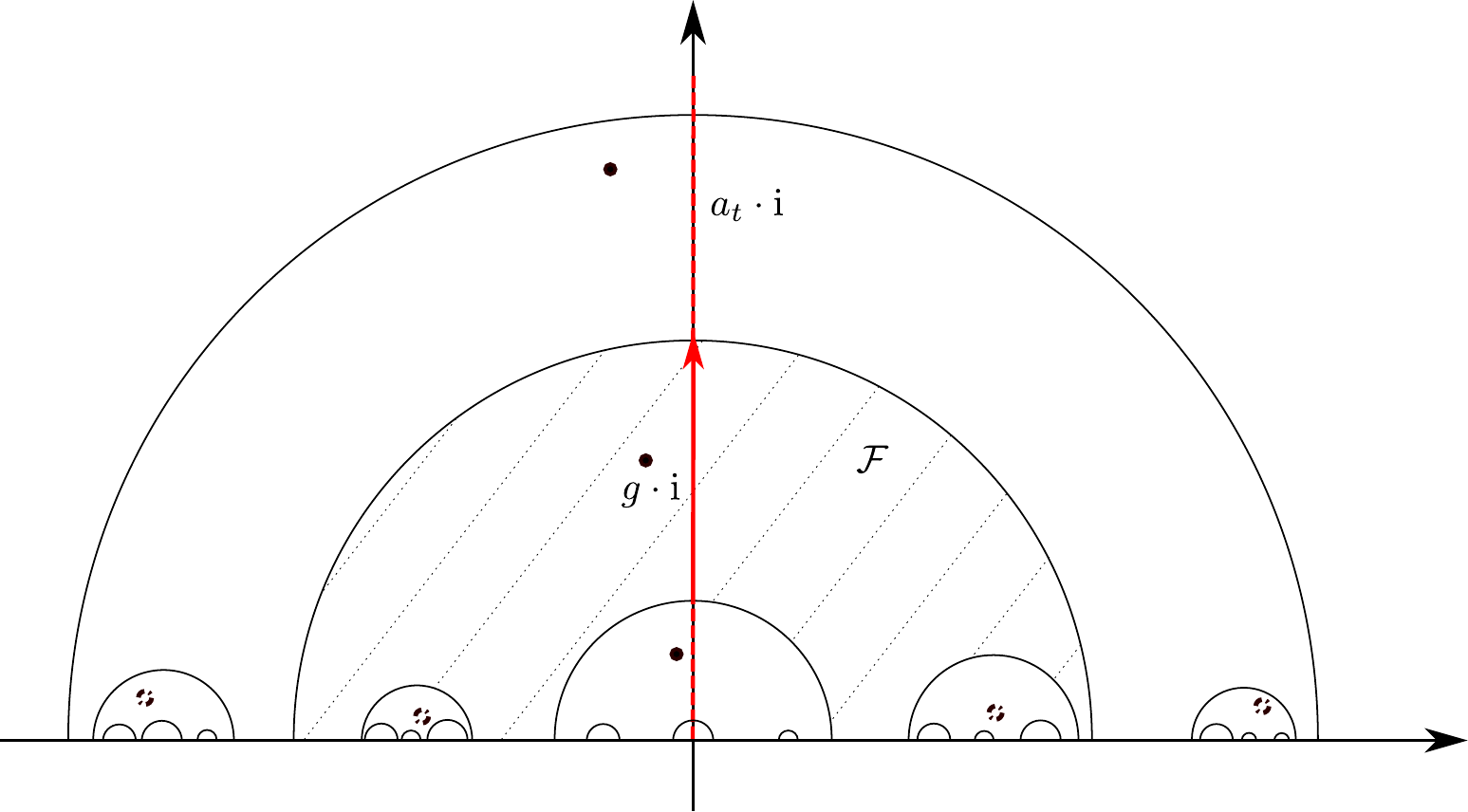}
	\caption{Illustration of the fundamental circles (in rank $r = 2$) and some of their translates as used in the approximation of Gaussian period integrals on the fundamental domain of a Schottky surface.
	The dashed points correspond to translates of $g\cdot\mathrm{i}$ along elements of $\Gamma_0$, with vanishing contribution in the limit $\sigma\rightarrow 0$, whereas the solid points correspond to translates along $\widetilde{\Gamma}$.}
	\label{fig3}
\end{figure}

By the $\Gamma$-invariance of $f_{\sigma, gK}$ we may restrict attention to centers $g$ inside the canonical fundamental domain, i.e. $g\cdot \mathrm{i}\in \mathcal{F}$.
It is then practical to consider the following decomposition of the group $\Gamma$ into two disjoint subsets:
\begin{equation*}
\begin{split}
\Gamma = \Gamma_0 \sqcup \widetilde{\Gamma} ~, \qquad
\Gamma_0 &\defgr \left\{ h\in\Gamma \,|\, h\mathcal{F}\cap (g_0^{-1} a_t\cdot \mathrm{i}) = \emptyset\, \forall t\in\mathbb{R} \right\} ~,\\
\widetilde{\Gamma} &\defgr \left\{ h\in\Gamma \,|\, h\mathcal{F}\cap (g_0^{-1} a_t\cdot \mathrm{i}) \neq \emptyset\, \forall t\in\mathbb{R} \right\} ~.
\end{split}
\end{equation*}
The sum over $\Gamma$ appearing in \eqref{eq_invariant_hyperbolic_gauss} splits accordingly.
We begin our analysis with the sum over $\Gamma_0$ by noting that there exists a constant $c > 0$ such that every $h\in \Gamma_0$ satisfies
\begin{equation*}
\mathrm{d}_{G\slash K}(g_0 h g K, a_t K) \geq c > 0 \quad \forall t\in\mathbb{R} ~.
\end{equation*}
Combining this estimate with the more general and well-known exponential growth bound \cite[Eq.~(2.22)]{Borthwick.2016} we obtain
\begin{equation*}
\# \left\{h\in \Gamma \,\bigg|\, \inf_{t\in [0, T_\gamma]} \mathrm{d}_{G\slash K}(g_0 h g K, a_t K) \leq s \right\} = \mathcal{O}(\mathrm{e}^s) ~,
\end{equation*}
which lets us conclude that
\begin{equation*}
\begin{split}
&\quad ~\mathcal{N}_\sigma^{-1} \sum_{h\in \Gamma_0} \int_0^{T_ \gamma} \exp\bigg( -\frac{\mathrm{d}_{G\slash K}(g_0 h g K, a_t K)^2}{\sigma^2} \bigg) ~\mathrm{d}t \\
&\leq C \mathcal{N}_\sigma^{-1} \sum_{n\in\mathbb{N}}
\sum_{\substack{h\in\Gamma_0:\, \forall t\in [0, T_\gamma]\\ \mathrm{d}_{G\slash K}(g_0 h g K, a_t K) \\ \leq c + 1}}
\mathrm{e}^{-\frac{c^2 + n^2}{\sigma^2}} ~,
\end{split}
\end{equation*}
i.e. the sum over $\Gamma_0$ is of the order $\mathcal{O}(\mathcal{N}_\sigma^{-1} \mathrm{e}^{-\frac{C}{\sigma^2}})$ as $\sigma \rightarrow 0$.
Our discussion of the normalization factor below will reveal $\mathcal{N}_\sigma^{-1} = \mathcal{O}(\sigma^{-2})$ so this does indeed vanish in the limit $\sigma\rightarrow 0$.

The significant contribution to the total period integral is given by the second sum over $\widetilde{\Gamma}$.
We treat this term by first fixing a group element $g_w$, $w = (i_1, ..., i_n)\in \mathcal{W}_n$, representing $\gamma$ and observing that the action of $g_w$ restricted to $\gamma$ is simply translation.
We may therefore absorb the cyclic subgroup generated by $g_w$ into the integral and re-write
\begin{equation*}
\begin{split}
&\quad \sum_{h\in \widetilde{\Gamma}} \int_0^{T_ \gamma} \exp\bigg( -\frac{\mathrm{d}_{G\slash K}(g_0 h g K, a_t K)^2}{\sigma^2} \bigg) ~\mathrm{d}t \\
&= \sum_{j = 1}^n \int_{-\infty}^\infty \exp\bigg( -\frac{\mathrm{d}_{G\slash K}(g_0 g_{i_j}\cdots g_{i_1} gK, a_t K)^2}{\sigma^2} \bigg) \mathrm{d}t ~.
\end{split}
\end{equation*}

As we approach our final expression we change perspective from the quotient $G\slash K$ back to the upper half plane.
To simplify the resulting Gaussian integral we make the following approximation
\begin{equation*}
\mathrm{d}_\mathbb{H}(x + \mathrm{i}y, \mathrm{i}s)^2 = \mathrm{d}_\mathbb{H}(x + \mathrm{i}y, \mathrm{i}y)^2 + \mathrm{d}_\mathbb{H}(\mathrm{i}y, \mathrm{i}s)^2 + \mathcal{O}(x^4) ~,
\end{equation*}
and substituting this into \eqref{eq_integral_fund_domain} we arrive at the following approximate expression for the period integrals:
\begin{equation*}
\int_{\pi\circ \gamma} f_{\sigma, gK} \approx \mathcal{N}_\sigma^{-1} \sum_{j = 1}^n \mathrm{e}^{-\frac{x_j^2}{\sigma^2 y_j^2}} \int_{-\infty}^\infty \mathrm{e}^{-\frac{(y_j - t)^2}{\sigma^2}} ~\mathrm{d}t = \mathcal{N}_\sigma^{-1} \sqrt{\pi} \sigma \sum_{j = 1}^n \mathrm{e}^{-\frac{x_j^2}{\sigma^2 y_j^2}} ~,
\end{equation*}
where we used the definition $x_j + \mathrm{i}y_j \defgr g_0 g_{i_j} \cdots g_{i_1} g \cdot \mathrm{i}$ for the complex coordinates of the points $g_0 g_{i_j} \cdots g_{i_1} gK \in G\slash K$.

Finally, we use the approximation $\cosh^{-1}(1 + z)^2 = 2z + \mathcal{O}(z^3)$ to simplify hyperbolic distance.
This lets us calculate the normalization factor $\mathcal{N}_\sigma$ in a straight forward manner:
\begin{equation*}
\mathcal{N}_\sigma \approx \int_0^\infty \int_{-\infty}^\infty \exp\bigg( -\frac{x^2 + (y - 1)^2}{\sigma^2 y} \bigg) \frac{\mathrm{d}x \mathrm{d}y}{y^2} = \sqrt{\pi} \sigma \int_0^\infty \exp\bigg( -\frac{(y - 1)^2}{\sigma^2 y} \bigg) \frac{\mathrm{d}y}{y^{3/2}} = \pi \sigma^2 ~,
\end{equation*}
thus concluding our discussion on how to approximate $\pi_* \mathcal{T}_{\lambda_0}$ for practical implementation purposes with the final expression
\begin{equation} \label{eq_calcInt_fundamental}
\int_{\pi\circ \gamma} f_{\sigma, gK} \approx \frac{1}{\sqrt{\pi}\sigma} \sum_{j = 1}^n \mathrm{e}^{-\frac{x_j^2}{\sigma^2 y_j^2}} ~.
\end{equation}

For convenience we summarize the steps necessary for the calculation of $Z_{f_{\sigma, gK}\circ \pi}$ in the pseudo-code of the following Snippet~\ref{alg1}.
Taking the residue of this weighted zeta then yields the approximation $\mathrm{t}^{G\slash K}_{\lambda_0, \sigma}$.
For remarks on how to calculate residues in practice we refer to Section~\ref{sym_red.2}.

\begin{algorithm}
	\caption{%
		Pseudo-code for the calculation of the weighted zeta function $Z_{f_{\sigma, gK}\circ \pi}$ used to approximate the distribution $\pi_* \mathcal{T}_{\lambda_0}$.
	}\label{alg1}
	\KwIn{width $\sigma > 0$, center $gK\in G/K$, resonance $\lambda_0\in\mathbb{C}$, cut-off $N\in\mathbb{N}$}
	
	$d_f \gets 1$ \;
	$\partial_\beta d_f \gets 0$\;
	$d[0] \gets 1$\;
	$\partial_\beta d[0] \gets 0$\;
	
	\Comment{calculate Bell recursion, for initial terms $a[k]$ see below}
	\For{$n := 1$ to $N$}{
		$d[n] \gets 0$\;
		\For{$k := 1$ to $n$}{
			$d[n] \gets d[n] + \frac{k}{n} d[n-k] a[k]$\;
			$\partial_\beta d[n] \gets \partial_\beta d[n] + \partial_\beta d[n-k] a[k] + d[n-k] \partial_\beta a[k]$\;
		}
		$d_f \gets d_f + d[n]$\;
		$\partial_\beta d_f \gets \partial_\beta d_f + \partial_\beta d[n]$;
	}
	$\mathrm{return}~ \frac{\partial_\beta d_f}{d_f}$\;
	
	\Comment{calculate initial terms $a[k]$ in Bell recursion}
	$a[k] \gets 0$\;
	$\partial a[k] \gets 0$\;
	\For{$w\in\mathcal{W}_k$}{
		$a[k] \gets a[k] - \frac{1}{k} \frac{\exp(-(\lambda_0 - 1) \ell(g_w))}{(\mathrm{e}^{\ell(g_w)} - 1)^2}$\;
		$\partial_\beta a[k] \gets \partial_\beta a[k] + \frac{\mathrm{calcInt}(w)}{k} \frac{\exp(-(\lambda_0 - 1) \ell(g_w))}{(\mathrm{e}^{\ell(g_w)} - 1)^2}$
		\Comment*[r]{$\mathrm{calcInt}$ implements \eqref{eq_calcInt_fundamental}}
	}
	
	\KwResult{approximation of $Z_{f_{\sigma, gK}\circ \pi}(\lambda_0)$}
\end{algorithm}

\subsection{Restricted Ruelle Distributions}\label{algo.2}

As announced in the introduction we consider the pullback $\iota_\Sigma^* \mathcal{T}_{\lambda_0}$ along the inclusion $\iota_\Sigma: \Sigma\hookrightarrow S\mathbf{X}_\Gamma$ of a hypersurface $\Sigma$ as a second approach of reducing the complexity of $\mathcal{T}_{\lambda_0}$ from the full three-dimensional space $S\mathbf{X}_\Gamma$ to a two-dimensional subspace.
This distributional operation is well-defined by a classical theorem of Hörmander~\cite[Thm.~8.2.4]{Hormander.2003} as long as $\Sigma$ is transversal to the geodesic flow \cite[Lemma~2.3]{Schuette.2021b}.
If this is satisfied we call $\Sigma$ a \emph{Poincar\'{e} section} and the first part of the upcoming discussion applies to any such submanifold.
Only once we require an implementation-level prescription for the calculation of period integrals will we introduce a specific choice of $\Sigma$.
This $\Sigma$ will be used throughout the numerical examples but could very well be replaced by a number of alternative choices (see Remark \ref{remark1}).

The operational meaning of the pullback is not provided by an explicit formula as was the case for the pushforward --
instead it can only be defined as a limit in $\mathcal{D}'(\Sigma)$. Concretely if any family of smooth functions converges to $\mathcal{T}_{\lambda_0}$ in the space $\mathcal{D}'_{E_u^*\oplus E_s^*}(S\mathbf{X}_\Gamma)$ then their restrictions to
$\Sigma$ converge to the restriction of $\mathcal{T}_{\lambda_0}$. The latter space consists of distributions with wavefront set
contained in $E_u^*\oplus E_s^*$:\footnote{
	The direct sum $E_u^*\oplus E_s^*\subseteq T^*(S\mathbf{X}_\Gamma)$ is closely related to the hyperbolicity of the geodesic flow on $S\mathbf{X}_\Gamma$.
	The technical details can be found in \cite{Schuette.2021b}.
}
\begin{equation*}
\lim_{\sigma\rightarrow 0} \mathrm{t}_{\lambda_0, \sigma}^\Sigma = \mathcal{T}_{\lambda_0} ~\text{in}~ \mathcal{D}'(S\mathbf{X}_\Gamma)
~\quad\Longrightarrow\quad
\iota_\Sigma^* \mathcal{T}_{\lambda_0} = \lim_{\sigma\rightarrow 0} \mathrm{t}_{\lambda_0, \sigma}^\Sigma \big|_\Sigma ~\text{in}~ \mathcal{D}'(\Sigma) ~.
\end{equation*}

We construct the approximations $\mathrm{t}^\Sigma_{\lambda_0, \sigma}$ by a similar approach as the one that resulted in the functions $\mathrm{t}^{G / K}_{\lambda_0, \sigma}$ in the previous paragraph.
Here we work on the whole group $G \cong S\mathbb{H}$ instead of $G / K$, though:
\begin{equation}
\begin{split}
\mathrm{t}^\Sigma_{\lambda_0, \sigma}(g) &\defgr \left\langle \mathcal{T}_{\lambda_0}, f_{\sigma, g} \right\rangle ~, \qquad\qquad\qquad\qquad\qquad ~ g\in G\\
f_{\sigma, g}(g') &\defgr \frac{1}{\mathcal{M}_\sigma} \sum_{h\in \Gamma} \exp\bigg( -\frac{\mathrm{d}_G(g, hg')^2}{\sigma^2} \bigg) ~, \qquad g'\in G ~, 
\end{split}
\end{equation}
where $\mathcal{M}_\sigma$ again denotes an appropriate normalization factor and $\mathrm{d}_G$ is a smooth distance function on $G\times G$ to be specified later.
By definition $f_{\sigma, g}(g')$ is left $\Gamma$-invariant in $g'$ such that $\mathrm{t}^\Sigma_{\lambda_0, \sigma}$ is well-defined on $S\mathbf{X}_\Gamma$.

In terms of concrete calculation we need a way to evaluate integrals of $f_{\sigma, g}$ over closed geodesics for elements $g\in G$ such that $\Gamma g\cdot (\mathrm{i}, \mathrm{i})\in \Sigma$.
To render these quantities as computationally inexpensive as possible we will proceed to specifying a suitable combination of surface $\Sigma$ and adapted distance $\mathrm{d}_G$.

As mentioned and exploited in Section~\ref{dyn_det.1} there exists a particular set of coordinates well adapted to the action of the geodesic flow called \emph{Hopf coordinates} \cite{Thirion.2007,Dang.2021}.
Both $\Sigma$ and $\mathrm{d}_G$ will be described in these coordinates but we start by giving a more structure theoretic definition:
The following map is a $G$-equivariant\footnote{
	We will not go into details about the $G$-action on the first component of the codomain of $\mathbb{H}$, c.f. \cite[Proposition~2.9]{Dang.2021}.
}
diffeomorphism
\begin{equation*}
\begin{split}
\mathcal{H}: G &\longrightarrow \mathbb{R}\times (G\slash P)_\Delta\\
g &\mapsto (\beta_{g_+}(g^{-1}\cdot o, o), g_+, g_-) ~,
\end{split}
\end{equation*}
where $g_+$ and $g_-$ are the fixed points on the boundary of hyperbolic space $G \slash P \defgr \mathrm{PSO}(2) \cong \partial\mathbb{H}$ of the isometry $g$ and $\beta_{g_+}(g^{-1}\cdot o, o)$ denotes the hyperbolic distance between the point of intersection of $\gamma(g)$ with the horocycle\footnote{
	In the upper halfplane model the horocycles centered at a boundary point $\xi\in \partial\mathbb{H}$ are the circles tangent to $\mathbb{R}$ at $\xi$ if $\xi\in\mathbb{R}$ and the lines parallel to $\mathbb{R}$ if $\xi = \infty$.
}
through $o \defgr (\mathrm{i}, \mathrm{i})$ centered at $g_+$ and the point $g^{-1}\cdot o$.
The subscript $\Delta$ indicates removal of the diagonal.

Geometrically, $\mathcal{H}$ describes a correspondence between the group $G\cong S\mathbb{H}$ and the space of parameterized geodesics of hyperbolic space.
We note again that in these coordinates the geodesic flow acts as $(t, g_+, g_-) \mapsto (t + s, g_+, g_-)$.

Our surface $\Sigma\subseteq S\mathbf{X}_\Gamma$ is best described dynamically in terms of the geodesic flow:
On the cover $\pi_\Gamma: S\mathbb{H} \rightarrow S\mathbf{X}_\Gamma$ its intersection $\widetilde{\Sigma} \defgr \pi_\Gamma^{-1}(\Sigma)\cap S\overline{\mathcal{F}}$ with the canonical fundamental domain consists of the points of intersection between the boundary $\partial D_i$ of fundamental circles and geodesics $\gamma$ with points at infinity $\gamma_\pm$ in distinct fundamental intervals $I_i, I_k$ (see Figure~\ref{fig4} for an illustration):
\begin{equation*}
\widetilde{\Sigma} = \bigg\{ (z, v)\in S\mathbb{H} \,\big|\, \exists \,\text{geodesic}\,\gamma\subseteq \mathbb{H}: (\gamma_+, \gamma_-)\in \bigcup_{i \neq j} I_i\times I_j,\, (z, v)\in \gamma \cap S\big( \bigcup_i \partial D_i \big) \bigg\} ~.
\end{equation*}
With respect to Hopf coordinates we can describe $\widetilde{\Sigma}$ by means of a smooth function $t = t(g_+, g_-)$, where $(g_+, g_-)\in \bigcup_{i \neq j} I_i\times I_j\subseteq (G \slash P)_\Delta$, via its graph
\begin{equation*}
\mathcal{H}(\widetilde{\Sigma}) = \bigg\{(t(g_+, g_-), g_+, g_-) \,\big|\, (g_+, g_-)\in \bigcup_{i \neq j} I_i\times I_j \bigg\} ~,
\end{equation*}
i.e. $\widetilde{\Sigma}$ is essentially parameterized by pairs of boundary points in distinct fundamental intervals.
Note that this definition determines a well-defined, unique hypersurface $\Sigma\subseteq S\mathbf{X}_\Gamma$ that satisfies the claimed relation $\widetilde{\Sigma} = \pi_\Gamma^{-1}(\Sigma)\cap S\overline{\mathcal{F}}$.

\begin{figure}[h]
	\includegraphics[scale=0.42]{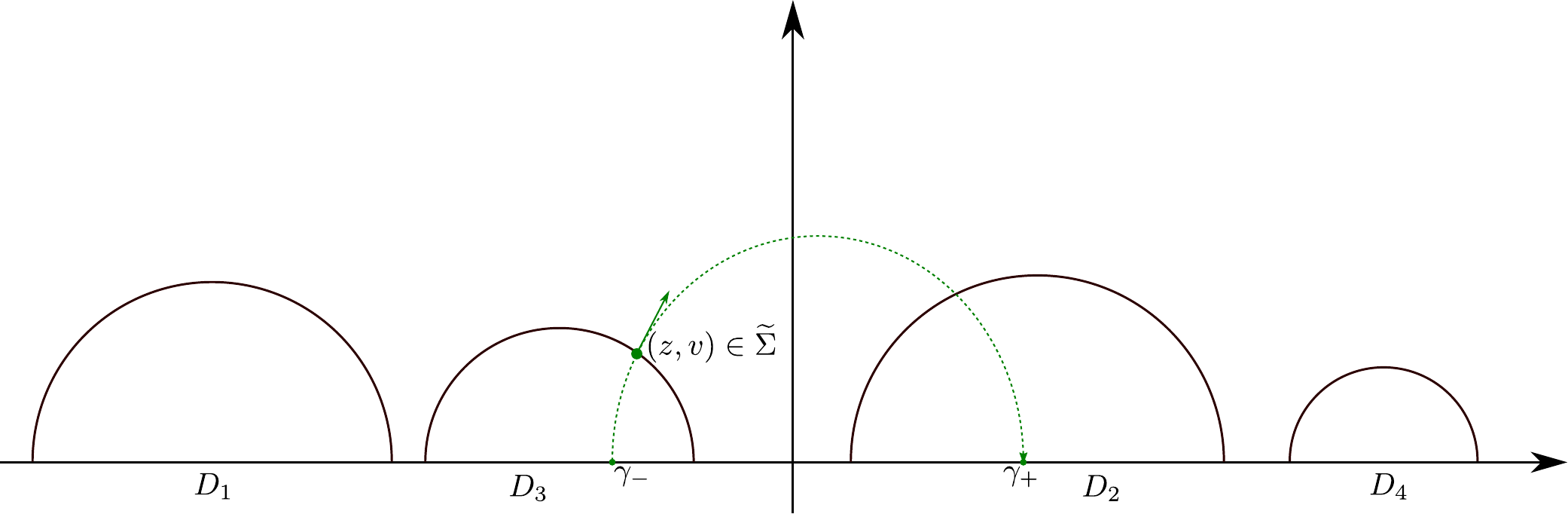}
	\caption{Sketch of a point $(z, v)$ in the fundamental domain $\widetilde{\Sigma}$ for the preimage of the Poincar\'{e} section $\Sigma$ together with its Hopf coordinates $(\gamma_-, \gamma_+)$ in the coordinate space $\bigcup_{i \neq j} I_i\times I_j$.}
	\label{fig4}
\end{figure}

To leverage the simplicity of the geodesic flow in Hopf coordinates we choose an adapted distance function $\mathrm{d}_{G}$ in the following fashion:
Denoting by $\mathrm{d}_{G / P}$ some $K$-invariant metric on the boundary of hyperbolic space let
\begin{equation*}
\mathrm{d}_G(g, g')^2 \defgr \mathrm{d}_{G / P}(g_+, g'_+)^2 + \mathrm{d}_{G \slash P}(g_-, g'_-)^2 + \vert t - t'\vert^2
\end{equation*}
in the respective Hopf coordinates of $g$ and $g'$.
In terms of dynamics the resulting Gaussian amounts to a product of Gaussians in the contracting, expanding, and neutral (flow-) directions.

\begin{remark}
	Note that the resulting distance function is not analytic on the whole domain $\mathbb{R}\times (G\slash P)_\Delta$ if we make the obvious choice
	of angle coordinates on the unit circle $\mathrm{S}^1 \cong G\slash P$ such that $\mathrm{d}_{G \slash P}^2$ becomes absolute value
	square on $[0, 2\pi]$ with the endpoints identified.
	While this is certainly true we do not require this analyticity for an application of Corollary~\ref{cor_fredholm_det} but only the
	analyticity of the \emph{resulting potential} as a function on the fundamental intervals $I_i$, c.f.~the discussion surrounding~\eqref{eq_period_integral_coord}.
	The final expression below will satisfy this analyticity making the theory of dynamical determinants developed above applicable here.
	From a theoretical standpoint the discussions involving the Poincar\'{e} section $\Sigma$ should be viewed more as an interpretation of
	what the potential given below means geometrically.
\end{remark}

Having made the choices above we can now calculate
$\mathrm{t}^\Sigma_{\lambda_0, \sigma}\circ \mathcal{H}^{-1}|_{\mathcal{H}(\widetilde{\Sigma})}$ using dynamical determinants if we have suitable expressions for the following period integrals over closed geodesics $\gamma \subseteq S\mathbf{X}_\Gamma$:
\begin{equation*}
\bigcup_{i \neq j} I_i\times I_j\ni (g_+, g_-) \mapsto \int_\gamma f_{\sigma, \mathcal{H}^{-1}(t(g_+, g_-), g_+, g_-)} ~.
\end{equation*}
First we may exploit $\Gamma$-invariance to re-write the integral as a sum over intersections with the fundamental domain.
The argument here is quite similar to the previous Section~\ref{algo.1}.
If the geodesic $\gamma$ is again represented by an isometry $g_w$, $w = (i_1, \hdots, i_n) \in \mathcal{W}_n$ then these intersections are determined by fixed points $(\gamma^i_+, \gamma^i_-)$ of cyclic permutations
\begin{equation*}
g_{i_n} g_{i_{n-1}}\cdots g_{i_1},~ g_{i_1} g_{i_n} \cdots g_{i_2},~ g_{i_2} g_{i_1} g_{i_n} \cdots g_{i_3},~ \text{etc.} ~,
\end{equation*}
and the period integrals become
\begin{equation*}
\begin{split}
&\quad ~ \int_\gamma f_{\sigma, \mathcal{H}^{-1}(t(g_+, g_-), g_+, g_-)}\\
&= \mathcal{M}_\sigma^{-1} \sum_{i = 1}^n \mathrm{e}^{-\frac{\mathrm{d}_{G\slash P}(g_+, \gamma^i_+)^2 + \mathrm{d}_{G\slash P}(g_-, \gamma^i_-)^2}{\sigma^2}}
\int_{-\infty}^{\infty} \mathrm{e}^{-\frac{\vert t(g_+, g_-) - t\vert^2}{\sigma^2}} \mathrm{d}t ~.
\end{split}
\end{equation*}
Here the second integral comes from taking the $t$-entry of Hopf coordinates for the lift $\widetilde{\gamma}^i$ of $\gamma$ with endpoints at infinity $(\gamma^i_+, \gamma^i_-)$ and integrating its distance from $t(g_+, g_-)$ over the whole geodesic $\widetilde{\gamma}^i$ of $\gamma$.
If we combine this with an evaluation of the constant $\mathcal{M}_\sigma$, which reduces to iterated Gaussian integrals, our period integrals becomes the rather handy expression
\begin{equation}\label{eq_poincare_integrals}
\int_\gamma f_{\sigma, \mathcal{H}^{-1}(t(g_+, g_-), g_+, g_-)} = \frac{1}{\pi\sigma^2} \sum_{i = 1}^n \mathrm{e}^{-\frac{\mathrm{d}_{G\slash P}(g_+, \gamma^i_+)^2 + \mathrm{d}_{G\slash P}(g_+, \gamma^i_-)^2}{\sigma^2}} ~.
\end{equation}

Again the approximation $\mathrm{t}^{\Sigma}_{\lambda_0, \sigma}$ restricted to $\Sigma$ can be calculated via the residues of a weighted zeta function, concretely $Z_{f_{\sigma, \mathcal{H}^{-1}(t(g_+, g_-), g_+, g_-)}}$.
The latter is straightforward to implement using Snippet~\ref{alg1} but substituting the routine $\verb|calcInt|$ with an implementation of \eqref{eq_poincare_integrals}. Furthermore if we take Equation~\eqref{eq_poincare_integrals} as a definition it does indeed yield an analytic potential which makes Corollary~\ref{cor_fredholm_det} immediately applicable.

\begin{remark}\label{remark1}
	Note that it is conceptually straight forward to replace the specific hypersurface $\Sigma$ with another choice $\Sigma'$.
	One has to make sure that $\Sigma'$ admits a family of test functions for which period integrals can be calculated efficiently.
	In most applications this should come down to finding an appropriate parametrization for $\Sigma'$, adapting the test functions to this parametrization, and finally calculating the period integrals in this parametrization (using suitable approximations).
\end{remark}


\section{Symmetry Reduction of Weighted Zeta Functions}\label{sym_red}

If we use the theoretical and practical tools developed up to this point it turns out that we require a large amount of closed geodesics and corresponding period integrals to compute the dynamical determinant with sufficient accuracy.
In this section we will therefore develop a method that allows us to exploit inherent symmetries of different classes of Schottky surfaces to significantly reduce the required computational resources.
Our approach to symmetry reduction essentially is an adaptation and generalization of similar work done by Borthwick and Weich \cite{Weich.2016} in the context of \emph{iterated function schemes}.
Even though these systems only incorporate an expanding direction it is quite straight forward to include the contracting direction present in the determinants constructed in Section~\ref{dyn_det}.

\begin{remark}
	It should be rather straight forward to formulate and prove a version of the upcoming symmetry reduced dynamical determinant in the full setting of~\cite{Rugh.1992}.
	We refrain from explicitly treating this greater generality here to keep the discussion aligned with the previous sections, in particular Section~\ref{dyn_det}.
	A practically important generalization will instead be included in the first author's PhD thesis, see also Section~\ref{numerics}.
\end{remark}

This section is organized as follows:
In Section~\ref{sym_red.1} we give some basic definitions and present the main theorem stating how our dynamical determinant $d_f$ decomposes as a product of symmetry reduced dynamical determinants with the product indexed by irreducible, unitary representations of some suitable symmetry group.
While this theorem describes the theoretical situation completely it is not directly accessible to practical implementation.
Section~\ref{sym_red.2} remedies this by providing a detailed description how the symmetry reduction can be implemented.
The final Section~\ref{sym_red.3} contains a short comparison of the computational effort needed to calculate dynamical determinants with and without symmetry reduction.

\subsection{Main Theorem}\label{sym_red.1}

We begin by stating the fundamental definition of what the symmetry group of a particular representation of some Schottky surface should be \cite[Theorem~3.1]{Weich.2016}:
\begin{defn}\label{def_sym_group}
	Let $\Gamma\subseteq \mathrm{SL}(2, \mathbb{R})$ be a rank-$r$ Schottky group with generators $g_1, \hdots, g_r$ and fundamental discs $D_1, \hdots, D_{2r}$.
	Let $\mathbf{G}$ be a finite group acting on $\bigcup_i D_i$ by holomorphic functions extending continuously to the boundary and
	define a $\mathbf{G}$-action on $\{1, \ldots, 2r\}$ by the relation $\mathbf{g}(D_i) = D_{\mathbf{g}\cdot i}$.	
	
	Then $\mathbf{G}$ is called a \emph{symmetry group of the generating set $\langle g_1, ..., g_r\rangle = \Gamma$} if for any $\mathbf{g}\in \mathbf{G}$ and $i\neq j\in\{1, \hdots, 2r\}$ there exists an index $k\in\{1, \hdots, 2r\}$ such that $k\neq \mathbf{g}\cdot i$ and
	\begin{equation*}
	\mathbf{g}\cdot (g_j z) = g_k (\mathbf{g}\cdot z), \qquad \forall z\in D_i ~,
	\end{equation*}
	
\end{defn}
Because $\mathbf{g}\in\mathbf{G}$ acts on each disc as a biholomorphic map the image $\mathbf{g}(D_i)$ must again be some disc making $\mathbf{g}\cdot i$ well-defined. The index $k$ in the relation $\mathbf{g}\cdot (g_j z) = g_k (\mathbf{g}\cdot z)$ is unique and we observe that $k = \mathbf{g}\cdot j$.
Furthermore the group action of $\mathbf{G}$ on the elements of $\mathbb{Z} \slash 2r\mathbb{Z}$ takes distinct pairs of indices $i\neq j$ to distinct pairs: $\mathbf{g}\cdot i\neq \mathbf{g}\cdot j$.
The action of an element $\mathbf{g}\in\mathbf{G}$ can then be written concisely as
\begin{equation*}
\mathbf{g}\cdot (g_j z) = g_{\mathbf{g}\cdot j} (\mathbf{g}\cdot z), \qquad \forall z\in D_i,\, j\neq i ~.
\end{equation*}
We extend this action to $(\mathbb{Z} \slash 2r\mathbb{Z})^n$ (or $\mathcal{W}_n$) by acting on each element separately.

The desired composition of our dynamical (Fredholm) determinants now follows from the rather simple representation theory of finite groups by observing that $\mathbf{G}$ acts on the function spaces introduced in Section~\ref{dyn_det.1} via the \emph{left-regular representation}:
\begin{equation*}
\left\langle \mathbf{g}\cdot u(z_1, z_2), v(z_1) \right\rangle \defgr \left\langle u(\mathbf{g}^{-1}\cdot z_1, \mathbf{g}^{-1}\cdot z_2), v(z_1) \right\rangle, \qquad u\in \bigoplus_{i\neq j} \mathcal{H}^{-2}(D_i)\otimes \mathcal{H}^2(D_j) ~.
\end{equation*}
This representation will generally not be unitary because we defined the $\mathrm{L}^2$-scalar product via Lebesgue measure.
The standard trick of averaging the pushforward of Lebesgue measure over the finite group $\mathbf{G}$ guarantees unitary, though.
This yields a modified Bergman space which contains the same functions but whose scalar product differs from the standard one used so far by some smooth density factor.

Well known representation theory of finite groups \cite[Part~I]{Fulton.2004} now provides a direct sum decomposition of this modified Bergman space indexed by characters $\chi$ of (equivalence classes $\widehat{\mathbf{G}}$ of) irreducible, unitary representations of $\mathbf{G}$ with the projectors on the individual summands given by
\begin{equation*}
\mathrm{P}_\chi \defgr \frac{d_\chi}{\vert \mathbf{G}\vert} \sum_{\mathbf{g}\in\mathbf{G}} \overline{\chi(\mathbf{g})} \mathbf{g} ~.
\end{equation*}
In this equation $d_\chi$ refers to the dimension of the representation with character $\chi$ and $\vert \mathbf{G}\vert$ denotes the cardinality of $\mathbf{G}$.
As the projections do not involve the scalar product we immediately derive a corresponding non-orthogonal direct sum decomposition of our original Hilbert spaces:
\begin{equation*}
\bigoplus_{i\neq j} \mathcal{H}^{-2}(D_i)\otimes \mathcal{H}^2(D_j) = \bigoplus_{\chi\in \widehat{\mathbf{G}}} \mathcal{H}_\chi, \qquad \mathcal{H}_\chi\defgr \mathrm{P}_\chi\bigg( \bigoplus_{i\neq j} \mathcal{H}^{-2}(D_i)\otimes \mathcal{H}^2(D_j) \bigg)~.
\end{equation*}

The transfer operator $\mathrm{L}_V$ defined in Section~\ref{dyn_det.1} now commutes with the action of $\mathbf{G}$ if the potential $V$ is $\mathbf{G}$-invariant, i.e. $V(\mathbf{g}^{-1} \cdot z_1, \mathbf{g}^{-1}\cdot z_2) = V(z_1, z_2)$:
\begin{equation*}
\begin{split}
&\qquad \left\langle \mathrm{L}_V(\mathbf{g}\cdot u)(z_1, z_2), v(z_1) \right\rangle \bigg|_{\substack{v\in \mathcal{H}^2(D_i) \\ z_2\in D_j}}\\
&= \int_{\partial D_i} u(\mathbf{g}^{-1} \cdot g_i(z_1), \mathbf{g}^{-1} \cdot g_i(z_2)) V(z_1, z_2) v(z_1) \frac{\mathrm{d} z_1}{2\pi\mathrm{i}}\\
&= \int_{\partial D_i} u(g_{\mathbf{g}^{-1} \cdot i}(\mathbf{g}^{-1} \cdot z_1), g_{\mathbf{g}^{-1} \cdot i}(\mathbf{g}^{-1} \cdot z_2)) V(\mathbf{g}^{-1}\cdot z_1, \mathbf{g}^{-1}\cdot z_2) v(z_1) \frac{\mathrm{d} z_1}{2\pi\mathrm{i}}\\
&= \left\langle \mathbf{g}\cdot \big( \mathrm{L}_V u \big)(z_1, z_2), v(z_1) \right\rangle \bigg|_{\substack{v\in \mathcal{H}^2(D_i) \\ z_2\in D_j}} ~.
\end{split}
\end{equation*}
From this it is obvious that $\mathrm{L}_V$ commutes with the projections $\mathrm{P}_\chi$ which makes the subspaces $\mathcal{H}_\chi$ invariant under $\mathrm{L}_V$.
This simple observation is already the key to the main factorization theorem of this section.
Before we can state said theorem we need one additional definition:
Given an element $\mathbf{g}\in \mathbf{G}$ we define a symmetry adapted version of $\mathcal{W}_n$ as follows
\begin{equation*}
\mathcal{W}^\mathbf{g}_n \defgr \big\{ (i_1, \hdots, i_n) \in (\mathbb{Z} \slash 2r\mathbb{Z})^n \,\big|\, i_j\neq i_{j + 1} + r\, \forall\, j\in \{1, \hdots, n - 1\} ,\, \mathbf{g}\cdot i_n\neq i_1 + r \big\} ~.
\end{equation*}
In unison with \cite{Weich.2016} we refer to elements of $\mathcal{W}^\mathbf{g}_n$ as \emph{$\mathbf{g}$-closed words of length $n$}.

For the main theorem first note that $g_w\circ \mathbf{g}$ has a unique pair of repelling and attracting fixed points $x^{w, \mathbf{g}}_-\in D_{i_n + r}$ and $x^{w, \mathbf{g}}_+\in D_{\mathbf{g}^{-1}\cdot i_1}$ because $\mathbf{g}$ is biholomorphic and the same argument as in \cite[Lemma~2.6]{Weich.2016} can be applied with domains $D_{i_n + r}$ and $D_{\mathbf{g}^{-1}\cdot i_1}$.
It can be stated as follows:
\begin{theorem}[\cite{Weich.2016}, Prop.~3.3 and Thm.~4.1]\label{thm_sym_factorization}
	Let $\Gamma$ be a Schottky group with symmetry group $\mathbf{G}$ and $V$ a $\mathbf{G}$-invariant potential which is analytic in a neighborhood of the fundamental circles.
	Then the Fredholm determinant of the transfer operator $\mathrm{L}_V$ factorizes as
	\begin{equation*}
	\det\left( \mathrm{id} - z\mathrm{L}_V \right) = \prod_{\chi\in \widehat{\mathbf{G}}} d_{V, \chi}(z) ~,
	\end{equation*}
	where for sufficiently small $\vert z\vert$ the \emph{symmetry reduced determinants} $d_{V, \chi}$ are explicitly given by the following expressions:
	\begin{equation*}
	d_{V, \chi}(z) = \exp\left( -\sum_{n = 1}^{\infty} \frac{z^n}{n} \frac{d_\chi}{\vert \mathbf{G}\vert} \sum_{\mathbf{g}\in \mathbf{G}} \chi(\mathbf{g}) \sum_{w\in \mathcal{W}^\mathbf{g}_n} \frac{V_w(\mathbf{g}\cdot x^{w, \mathbf{g}}_-, \mathbf{g}\cdot x^{w, \mathbf{g}}_+)}{((g_w\circ \mathbf{g})'(x^{w, \mathbf{g}}_-) - 1)(1 - (g_w\circ \mathbf{g})'(x^{w, \mathbf{g}}_+))} \right) ~.
	\end{equation*}
\end{theorem}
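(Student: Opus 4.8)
The factorization itself is essentially formal given the material assembled immediately above the theorem. Since $V$ is $\mathbf{G}$-invariant, the transfer operator $\mathrm{L}_V$ commutes with the left-regular $\mathbf{G}$-action, hence with every projection $\mathrm{P}_\chi$, so each subspace $\mathcal{H}_\chi = \mathrm{P}_\chi\big(\bigoplus_{i\neq j}\mathcal{H}^{-2}(D_i)\otimes\mathcal{H}^2(D_j)\big)$ is $\mathrm{L}_V$-invariant. A trace-class operator that respects a finite decomposition into invariant subspaces has a Fredholm determinant equal to the product of the determinants of the restrictions; this holds even though $\bigoplus_\chi\mathcal{H}_\chi$ is not an orthogonal decomposition, because the $\mathrm{P}_\chi$ are bounded, complementary, and commute with $\mathrm{L}_V$. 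Setting $d_{V,\chi}(z) \defgr \det(\mathrm{id} - z\mathrm{L}_V|_{\mathcal{H}_\chi})$ thus yields $\det(\mathrm{id} - z\mathrm{L}_V) = \prod_{\chi\in\widehat{\mathbf{G}}} d_{V,\chi}(z)$, with each factor entire in $z$ by the same Fredholm-theoretic argument as in Theorem~\ref{thm_fredholm_det} applied to $\mathrm{L}_V|_{\mathcal{H}_\chi}$.

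The real content is making $d_{V,\chi}$ explicit. For $\vert z\vert$ small enough that the logarithm converges we write, exactly as in \eqref{eq_log_det},
\[
d_{V,\chi}(z) = \exp\Big(-\sum_{n=1}^\infty\frac{z^n}{n}\,\mathrm{Tr}\big(\mathrm{L}_V^n|_{\mathcal{H}_\chi}\big)\Big),
\]
and since $\mathrm{L}_V$ commutes with $\mathrm{P}_\chi$ we have $\mathrm{Tr}(\mathrm{L}_V^n|_{\mathcal{H}_\chi}) = \mathrm{Tr}(\mathrm{L}_V^n\mathrm{P}_\chi)$. Inserting $\mathrm{P}_\chi = \tfrac{d_\chi}{\vert\mathbf{G}\vert}\sum_{\mathbf{g}\in\mathbf{G}}\overline{\chi(\mathbf{g})}\,\mathbf{g}$ reduces the computation to the twisted traces $\mathrm{Tr}(\mathrm{L}_V^n\mathbf{g})$, $\mathbf{g}\in\mathbf{G}$; the conjugate on $\chi$ will become $\chi$ in the final formula after relabeling $\mathbf{g}\leftrightarrow\mathbf{g}^{-1}$ and using $\chi(\mathbf{g}^{-1}) = \overline{\chi(\mathbf{g})}$ for unitary representations.

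Each twisted trace is computed by rerunning the proof of Theorem~\ref{thm_fredholm_det} with the extra operator $\mathbf{g}$ attached. Multiplying out $\mathrm{L}_V^n$ into components indexed by sequences $(i_1,\dots,i_n)$ of generator labels and composing with $\mathbf{g}$ (which permutes the discs via $\mathbf{g}(D_i) = D_{\mathbf{g}\cdot i}$), only those components that become endomorphisms of a single summand $\mathcal{H}^{-2}(D_{i_n+r})\otimes\mathcal{H}^2(D_{i_1})$ after the $\mathbf{g}$-twist contribute: the reduced-word constraints $i_j\neq i_{j+1}+r$ survive unchanged, while the closing-up constraint acquires the twist and becomes $\mathbf{g}\cdot i_n\neq i_1+r$, i.e. the sum runs exactly over $\mathcal{W}^{\mathbf{g}}_n$. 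The transformation to be "diagonalised" on such a component is $g_w\circ\mathbf{g}$, which by the argument cited from \cite[Lemma~2.6]{Weich.2016} (applied with the discs $D_{i_n+r}$ and $D_{\mathbf{g}^{-1}\cdot i_1}$) has a unique repelling/attracting fixed-point pair $x^{w,\mathbf{g}}_\pm$ there. Reducing both discs to $\mathbb{D}$ by translation and scaling, inserting the Bergman kernel $K_{\mathbb{D}}(x,y) = (y-x)^{-1}$ and applying the residue theorem exactly as in Theorem~\ref{thm_fredholm_det} gives
\[
\mathrm{Tr}\big(\mathrm{L}^{w,\mathbf{g}}_V\big) = \frac{V_w(\mathbf{g}\cdot x^{w,\mathbf{g}}_-,\,\mathbf{g}\cdot x^{w,\mathbf{g}}_+)}{\big((g_w\circ\mathbf{g})'(x^{w,\mathbf{g}}_-) - 1\big)\big(1 - (g_w\circ\mathbf{g})'(x^{w,\mathbf{g}}_+)\big)},
\]
the $\mathbf{g}$-shift in the argument of the iterated potential arising because $V_w$ is evaluated along the orbit that begins after the $\mathbf{g}$-action, with $\mathbf{G}$-invariance of $V$ guaranteeing consistency of $V_w$ under the relevant $\mathbf{G}$- and $\Gamma$-translations. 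Summing over $\mathcal{W}^{\mathbf{g}}_n$, then over $\mathbf{g}$ against $\chi$, then over $n$, and feeding the result into the logarithm produces the claimed formula for small $\vert z\vert$; analyticity in $z$ then extends the identity to all of $\mathbb{C}$.

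I expect the main obstacle to be the index bookkeeping in the twisted-trace computation: verifying precisely how composition with $\mathbf{g}$ turns the closed-word condition of Theorem~\ref{thm_fredholm_det} into the $\mathbf{g}$-closed-word condition defining $\mathcal{W}^{\mathbf{g}}_n$, and correctly locating the fixed points of $g_w\circ\mathbf{g}$ in $D_{i_n+r}$ and $D_{\mathbf{g}^{-1}\cdot i_1}$ so that the residue argument applies verbatim. A secondary nuisance is tracking the conjugation on $\chi$, handled by the substitution $\mathbf{g}\mapsto\mathbf{g}^{-1}$ together with the natural bijection between $\mathcal{W}^{\mathbf{g}^{-1}}_n$ and $\mathcal{W}^{\mathbf{g}}_n$ and the compatible transformation behaviour of $x^{w,\mathbf{g}}_\pm$.
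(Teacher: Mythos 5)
Your proposal is correct and follows essentially the same route as the paper: decompose into $\mathrm{L}_V$-invariant subspaces $\mathcal{H}_\chi$, reduce $d_{V,\chi}$ to twisted traces $\mathrm{Tr}(\mathrm{P}_\chi\mathrm{L}_V^n)$ via the explicit projectors, handle the conjugate on $\chi$ by the relabeling $\mathbf{g}\mapsto\mathbf{g}^{-1}$, and evaluate the resulting twisted diagonal components $\mathrm{L}_{V,\mathbf{g}}^w$ by the same Bergman-kernel residue computation as in Theorem~\ref{thm_fredholm_det}, with the fixed points of $g_w\circ\mathbf{g}$ in place of those of $g_w$. The only difference is cosmetic: you spell out more of the index bookkeeping (the twist on the closing condition and the $\overline{\chi}\to\chi$ substitution) that the paper leaves implicit as ``a very similar calculation.''
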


\begin{proof}
	By the previously discussed decomposition of the domain of $\mathrm{L}_V$ into the direct sum $\bigoplus_{\chi\in \widehat{\mathbf{G}}} \mathcal{H}_\chi$ it is clear that
	\begin{equation*}
	\det\left( \mathrm{id} - z\mathrm{L}_V \right) = \prod_{\chi\in \widehat{\mathbf{G}}} \det\left( \mathrm{id} - z\mathrm{L}_V\big|_{\mathcal{H}_\chi} \right) ~.
	\end{equation*}
	We are therefore tasked with computing $d_{V, \chi}(z) \defgr \det\big( \mathrm{id} - z\mathrm{L}_V\big|_{\mathcal{H}_\chi} \big)$.
	This Fredholm determinant can be expressed in terms of traces of $n$-fold iterates just as in \eqref{eq_log_det}.
	The key is therefore evaluating
	\begin{equation*}
	\mathrm{Tr}\bigg( \mathrm{L}_V^n\big|_{\mathcal{H}_\chi} \bigg) = \mathrm{Tr}\big( \mathrm{P}_\chi \mathrm{L}_V^n \big) ~,
	\end{equation*}
	where the equality immediately follows from the fact that $\mathrm{L}_V$ commutes with the projections $\mathrm{P}_\chi$ and the trace-class property is inherited from $\mathrm{L}^n_V$.
	
	Now the (diagonal) components of $n$-fold iterates of $\mathrm{L}_V$ were already calculated in \eqref{eq_transfer_comp}.
	A very similar calculation yields, after replacing the sum over $\mathbf{g}$ by with a sum over $\mathbf{g}^{-1}$, the following expression:
	\begin{equation*}
	\mathrm{Tr}\bigg( \mathrm{L}_V^n\big|_{\mathcal{H}_\chi} \bigg) = \frac{d_\chi}{\vert \mathbf{G}\vert} \sum_{\mathbf{g}\in \mathbf{G}} \chi(\mathbf{g}) \sum_{w\in \mathcal{W}^\mathbf{g}_n} \mathrm{Tr}\big( \mathrm{L}_{V, \mathbf{g}}^w \big) ~,
	\end{equation*}
	where the modified diagonal components $\mathrm{L}_{V, \mathbf{g}}^w$, $w\in \mathcal{W}^\mathbf{g}_n$, are operators of the form
	\begin{equation*}
	\left\langle \mathrm{L}_{V, \mathbf{g}}^w u(z_1, z_2), v(z_1) \right\rangle \bigg|_{\substack{v\in \mathcal{H}^2(D_{i_n + r}) \\ z_2\in D_{\mathbf{g}^{-1} i_1}}} = \int_{\partial D_{i_n + r}} V_g(\mathbf{g}\cdot z_1, \mathbf{g}\cdot z_2) u(g_w(\mathbf{g}\cdot z_1), g_w(\mathbf{g} \cdot z_2)) v(z_1) \frac{\mathrm{d} z_1}{2\pi\mathrm{i}} ~.
	\end{equation*}
	Here the traces of these operators can be calculated in complete analogy to Theorem~\ref{thm_fredholm_det} but now the pair of fixed points
	$x^{w, \mathbf{g}}_-\in D_{i_n + r}$ and $x^{w, \mathbf{g}}_+\in D_{\mathbf{g}^{-1}\cdot i_1}$ of the holomorphic map $g_w\circ \mathbf{g}$ appears. We finally obtain
	\begin{equation*}
	\mathrm{Tr}\left( \mathrm{L}^w_{V, \mathbf{g}} \right) = \frac{V_w(\mathbf{g}\cdot x^{w, \mathbf{g}}_-, \mathbf{g}\cdot x^{w, \mathbf{g}}_+)}{((g_w\circ \mathbf{g})'(x^{w, \mathbf{g}}_-) - 1)(1 - (g_w\circ \mathbf{g})'(x^{w, \mathbf{g}}_+))} ~,
	\end{equation*}
	which finishes the proof.
\end{proof}

\subsection{Implementation Details}\label{sym_red.2}

One of the key observations in \cite{Weich.2016} is a clever grouping of terms in $d_{V, \chi}$.
On the one hand it provides convergence beyond small $\vert z\vert$ and on the other hand it speeds up the practical computation of symmetry reduced zeta functions tremendously.
An adaptation of this work allows us to achieve both these benefits for our weighted zeta functions as well.
The following presentation closely follows \cite[Sections~4--5]{Weich.2016}.

To meet our objective we first introduce some additional notation:
Observing that elements $w\in \mathcal{W}_n^\mathbf{g}$ always occur together with their closing group element $\mathbf{g}$ in Theorem~\ref{thm_sym_factorization} it makes sense to define
\begin{equation*}
\mathcal{W}^\mathbf{G} \defgr \left\{ (w, \mathbf{g})\in \bigg( \bigcup_{n = 1}^\infty \mathcal{W}_n \bigg) \times \mathbf{G} \,\bigg|\, \mathbf{g}\cdot w_{n_w} \neq w_1 + r \right\} ~,
\end{equation*}
where $n_w$ denotes the length of $w$, i.e. $w\in \mathcal{W}_{n_w}$, and we index words as $w = (w_1, \hdots, w_{n_w})$.
We will henceforth denote elements of $\mathcal{W}^\mathbf{G}$ by boldface letters and indicate their first and second components by the corresponding non-boldface letter and a suitable subscript:
$\mathbf{w} = (w, \mathbf{g}_\mathbf{w})\in \mathcal{W}^\mathbf{G}$.
This definition immediately allows us to shorten the notation for fixed points introduced in the previous section by setting $x^\mathbf{w}_\pm \defgr x^{w, \mathbf{g}_\mathbf{w}}_\pm$.

The actual regrouping of terms now happens due to a derived action of $\mathbf{G}\times \mathbb{Z}$ on $\mathcal{W}^\mathbf{G}$ which we will describe next.
First of all an element $\mathbf{h}\in \mathbf{G}$ acts on $\mathcal{W}^\mathbf{G}$ via
\begin{equation*}
\mathbf{h}\cdot \mathbf{w} = \mathbf{h}\cdot (w, \mathbf{g}_\mathbf{w}) \defgr (\mathbf{h}\cdot w, \mathbf{h} \mathbf{g}_\mathbf{w} \mathbf{h}^{-1}) ~.
\end{equation*}
This action is complemented by the $\mathbb{Z}$-action generated by the two (inverse) \emph{shifts} acting in the first component as
\begin{equation*}
\begin{split}
\sigma_R \mathbf{w} &\defgr \big( (\mathbf{g}_\mathbf{w} w_n, w_1, \ldots, w_{n - 1}), \mathbf{g}_\mathbf{w} \big) ~,\\
\sigma_L \mathbf{w} &\defgr \big( (w_{2}, \ldots, w_n, \mathbf{g}_\mathbf{w}^{-1} w_1), \mathbf{g}_\mathbf{w} \big) ~.
\end{split}
\end{equation*}
This $\mathbb{Z}$-action commutes with the action of $\mathbf{G}$.
We can therefore consider the space $\big[ \mathcal{W}^\mathbf{G} \big] \defgr (\mathbf{G}\times \mathbb{Z}) \setminus \mathcal{W}^\mathbf{G}$ of orbits under the product of these actions and we denote the equivalence class containing $\mathbf{w}\in \mathcal{W}^\mathbf{G}$ by $[\mathbf{w}]$.

Before we can re-organize the sums appearing in Theorem~\ref{thm_sym_factorization} it remains to identify an appropriate notion of \emph{composite} elements.
To this end we define the \emph{$k$-fold iteration} of $\mathbf{w}\in \mathcal{W}^\mathbf{G}$ by the formula
\begin{equation*}
\mathbf{w}^k \defgr \big( (\mathbf{g}_\mathbf{w}^{k-1} w_1, \ldots, \mathbf{g}_\mathbf{w}^{k-1} w_n, \mathbf{g}_\mathbf{w}^{k-2} w_1, \ldots, \mathbf{g}_\mathbf{w} w_1, \ldots, \mathbf{g}_\mathbf{w} w_n, w_1, \ldots w_n), \mathbf{g}_\mathbf{w}^k  \big) ~,
\end{equation*}
and we call an element of $\mathcal{W}^\mathbf{G}$ \emph{prime} if it cannot be represented as such an iteration for $k > 1$.
Otherwise we call the element composite.

Next we recall some features of the various notions just introduced.
Complete proofs can be found in~\cite[Lemma~4.3, Prop.~4.4]{Weich.2016}.
We continue to denote by $V(z_1, z_2)$ some $\mathbf{G}$-invariant potential.
\begin{enumerate}
	\item An orbit of the $\mathbf{G}\times \mathbb{Z}$-action consists either entirely of prime or entirely of composite elements making $\left[ \mathcal{W}^\mathbf{G}_\mathrm{p} \right] \defgr \{ [\mathbf{w}] \,|\, w ~\text{prime} \}$ well-defined;
	\item Given $\mathbf{v}\in [\mathbf{w}^k]$ one has the equalities $V_v(\mathbf{g}_\mathbf{v} x^\mathbf{v}_-, \mathbf{g}_\mathbf{v} x^\mathbf{v}_+) = V_w(\mathbf{g}_\mathbf{w} x^\mathbf{w}_-, \mathbf{g}_\mathbf{w} x^\mathbf{w}_+)^k$ and $(g_v\circ \mathbf{g}_\mathbf{v})'(x^\mathbf{v}_\pm) = (g_w\circ \mathbf{g}_\mathbf{w})'(x^\mathbf{w}_\pm)^k$;
	\item If $\mathbf{G}$ acts freely on $\mathcal{W}^\mathbf{G}$ then the number of elements in the equivalence class $[\mathbf{w}]\in \big[ \mathcal{W}^\mathbf{G}_\mathrm{p} \big]$ can be calculated as $\# [\mathbf{w}] = \vert\mathbf{G} \vert \cdot n_\mathbf{w}$;
	\item If $m_\mathbf{w}\defgr \mathrm{ord}(\mathbf{g})$ denotes the \emph{order}\footnote{
		I.e. the smallest integer $k > 0$ such that $\mathbf{g}^k = \mathrm{id}_\mathbf{G}$.
	}
	of $\mathbf{g}\in\mathbf{G}$ then the equalities $(g_w\circ \mathbf{g}_\mathbf{w})'(x^\mathbf{w}_\pm) = g_{w^{m_\mathbf{w}}}'(x^{w^{m_\mathbf{w}}}_\pm)^{1 / m_\mathbf{w}}$ and $V_w(\mathbf{g}_\mathbf{w} \cdot x^\mathbf{w}_\pm) = V_{w^{m_\mathbf{w}}}(x^{w^{m_\mathbf{w}}}_\pm)^{1 / m_\mathbf{w}}$ hold for any $[\mathbf{w}]\in \big[ \mathcal{W}^\mathbf{G}_\mathrm{p} \big]$ and denoting by $x^v_\pm$ the fixed points of $g_v$.\footnote{
		And if the respective right-hand sides are real-valued, which is always the case for our particular potentials.
	}
\end{enumerate}
With this we can now reformulate Theorem~\ref{thm_sym_factorization} in a form which is very reminiscent of our definition of the dynamical determinant $d_f$ in \eqref{def_df}.
From the preceding discussion combined with Theorem~\ref{thm_sym_factorization} and under the assumptions of $\mathbf{G}$-invariant potential and free $\mathbf{G}$-action on $\mathcal{W}^\mathbf{G}$ we immediately deduce the following for sufficiently small $\vert z\vert$
\begin{equation} \label{eq_symmetry_reduced_general}
\begin{split}
d_{V, \chi}(z) &= \exp\left( - d_\chi \sum_{[\mathbf{w}]\in \big[\mathcal{W}^\mathbf{G} \big]} \frac{\#[\mathbf{w}]\cdot z^{n_\mathbf{w}}}{n_\mathbf{w}\cdot \vert\mathbf{G} \vert} \frac{\chi(\mathbf{g}_\mathbf{w}) V_w(\mathbf{g}_\mathbf{w}\cdot x^\mathbf{w}_-, \mathbf{g}_\mathbf{w}\cdot x^\mathbf{w}_+)}{((g_w\circ \mathbf{g}_\mathbf{w})'(x^\mathbf{w}_-) - 1) (1 - (g_w\circ \mathbf{g}_\mathbf{w})'(x^\mathbf{w}_+))} \right)\\
&= \exp\left( - d_\chi \sum_{k = 1}^\infty \sum_{[\mathbf{w}] \in \big[\mathcal{W}^\mathbf{G}_\mathrm{p} \big]} \frac{\#[\mathbf{w}^k] z^{k n_\mathbf{w}}}{k n_\mathbf{w} \vert\mathbf{G}\vert} \frac{\chi(\mathbf{g}_\mathbf{w}^k) \cdot V_w(\mathbf{g}_\mathbf{w}\cdot x^\mathbf{w}_-, \mathbf{g}_\mathbf{w}\cdot x^\mathbf{w}_+)^k}{((g_w\circ \mathbf{g}_\mathbf{w})'(x^\mathbf{w}_-)^k - 1) (1 - (g_w\circ \mathbf{g}_\mathbf{w})'(x^\mathbf{w}_+)^k)} \right)\\
&= \exp\left( - d_\chi \sum_{k = 1}^\infty \sum_{[\mathbf{w}]\in \big[\mathcal{W}^\mathbf{G}_\mathrm{p} \big]} \frac{z^{k n_\mathbf{w}}}{k} \frac{\chi(\mathbf{g}_\mathbf{w}^k) \cdot V_{w^{m_\mathbf{w}}}(x^{w^{m_\mathbf{w}}}_-, x^{w^{m_\mathbf{w}}}_+)^{k/m_\mathbf{w}}}{\big( \mathrm{e}^{ k T_{\gamma(g_{w^{m_\mathbf{w}}})} / m_\mathbf{w} } - 1 \big) \big( 1 - \mathrm{e}^{ -k T_{\gamma(g_{w^{m_\mathbf{w}}})} / m_\mathbf{w} } \big)} \right) ~.
\end{split}
\end{equation}
From here we can proceed by applying the cycle expansion philosophy introduced in Section~\ref{cycle_exp}.
This results in the following Proposition which generalizes Corollaries~\ref{coro_log_deriv} as well as \ref{coro_cycle_exp} and serves as our primary tool for practical implementation.
We require a final definition before presenting the actual statement:
Given a weight function $f\in\mathrm{C}^\omega(S\mathbf{X}_\Gamma)$ we say that it has \emph{$\mathbf{G}$-invariant period integrals} if its integrals over closed geodesics satisfy
\begin{equation*}
\int_{\gamma(g_w)} f = \int_{\gamma(g_{\mathbf{g}\cdot w})} f ~, \qquad \forall \mathbf{g}\in\mathbf{G},\, \forall w\in \mathcal{W}_n ~.
\end{equation*}
The main example for this is given by the case where $\mathbf{G}$ even acts on the whole unit sphere bundle $S\mathbf{H}$.
It then acts on $\Gamma$-invariant elements of $\mathrm{C}^\infty(S\mathbb{H})$ by translation and if $f$ is invariant under this $\mathbf{G}$-action then it also has $\mathbf{G}$-invariant period integrals because $\gamma(g_{\mathbf{g}\cdot w}) = \mathbf{g}\cdot \gamma(g_w)$ in this setting.

The central proposition now reads as follows:
\begin{prop}\label{prop_sym_factorization}
	Let $\Gamma$ be a Schottky group with symmetry group $\mathbf{G}$ acting freely on $\mathcal{W}^\mathbf{G}$ and $f\in \mathrm{C}^\omega(S \mathbf{X}_\Gamma)$ an analytic weight with $\mathbf{G}$-invariant period integrals.
	Then the following hold:
	\begin{enumerate}
		\item The dynamical determinant decomposes as a product
		\begin{equation*}
		d_f(\lambda, z, \beta) = \prod_{\chi\in \widehat{\mathbf{G}}} d_{f, \chi}(\lambda, z, \beta)
		\end{equation*}
		of symmetry reduced dynamical determinants $d_{f, \chi}$.
		\item The $d_{f, \chi}$ are given by an everywhere convergent power series expansion
		\begin{equation*}
		d_{f, \chi}(\lambda, z, \beta) = 1 + \sum_{n = 1}^\infty d_n^\chi(\lambda, \beta) z^n ~,
		\end{equation*}
		with coefficients in this expansion being holomorphic functions in $(\lambda, \beta)$ and explicitly given by the recursion
		\begin{equation*}
		\begin{split}
		d_n^\chi(\lambda, \beta) &= \sum_{m = 1}^n \frac{m}{n} d_{n - m}^\chi(\lambda, \beta) a_m^\chi(\lambda, \beta) ~, \qquad d_0^\chi(\lambda, \beta) \equiv 1 ~,\\
		a_m^\chi(\lambda, \beta) &= - d_\chi \sum_{\substack{([\mathbf{w}], k) \in \big[\mathcal{W}^\mathbf{G}_\mathrm{p} \big]\times \mathbb{N}_{> 0} \\ n_\mathbf{w}\cdot k = m}} \frac{\chi(\mathbf{g}_\mathbf{w}^k)}{k} \frac{\exp\big( -\frac{k(\lambda - 1)}{m_\mathbf{w}} T_{\gamma(g_{w^{m_\mathbf{w}}})} - \frac{k\beta}{m_\mathbf{w}} \int_{\gamma(g_{w^{m_\mathbf{w}}})} f \big)}{\big( \mathrm{e}^{kT_{\gamma(g_{w^{m_\mathbf{w}}})} / m_\mathbf{w}} - 1 \big)^2} ~.
		\end{split}
		\end{equation*}
		The coefficients $d_n^\chi(\lambda, \beta)$ satisfies super-exponential bounds of the same kind as in Corollary~\ref{coro_cycle_exp}.
		\item The weighted zeta function $Z_f$ decomposes as a sum of meromorphic functions given by logarithmic derivatives of the $d_{f, \chi}$:
		\begin{equation*}
		Z_f(\lambda) = \sum_{\chi\in\widehat{\mathbf{G}}} \frac{\partial_\beta d_{f, \chi}(\lambda, 1, 0)}{d_{f, \chi}(\lambda, 1, 0)} ~.
		\end{equation*}
	\end{enumerate}
\end{prop}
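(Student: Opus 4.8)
The plan is to obtain all three claims by combining results already in hand: Theorem~\ref{thm_sym_factorization} (the Fredholm determinant of a $\mathbf{G}$-invariant transfer operator factorizes over $\widehat{\mathbf{G}}$), Corollary~\ref{cor_fredholm_det} (the dynamical determinant $d_f$ \emph{is} the Fredholm determinant of the parameter-dependent operator $\mathrm{L}^f_{\lambda,\beta}$), the Bell-polynomial expansion from the proof of Corollary~\ref{coro_cycle_exp}, and Corollary~\ref{coro_log_deriv} (the logarithmic $\beta$-derivative of $d_f$ at $(z,\beta)=(1,0)$ equals $Z_f$). The genuine work is to check that the hypotheses of Theorem~\ref{thm_sym_factorization} apply to $\mathrm{L}^f_{\lambda,\beta}$ and to transport the cycle-expansion estimates through the projection onto $\mathcal{H}_\chi$.

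\emph{Part (1).} First I would verify that the potential $V_{\lambda,\beta}(z_1,z_2)=[g_i'(z_2)]^{-\lambda}\exp(-\beta f_{\gamma_{(z_1,z_2)}})$ from Corollary~\ref{cor_fredholm_det} satisfies, after passing to a cohomologous representative, the $\mathbf{G}$-invariance hypothesis of Theorem~\ref{thm_sym_factorization}. The weight factor $\exp(-\beta f_{\gamma_{(z_1,z_2)}})$ is handled by the standing assumption that $f$ has $\mathbf{G}$-invariant period integrals: the identities $\int_{\gamma(g_w)}f=\int_{\gamma(g_{\mathbf{g}\cdot w})}f$, the density of the fixed-point pairs $(x_-(g_w),x_+(g_w))$ in $\bigcup_{i\neq j}I_i\times I_j$, and the identity theorem for real-analytic functions together force $f_{\gamma_{(\mathbf{g}^{-1}z_1,\mathbf{g}^{-1}z_2)}}=f_{\gamma_{(z_1,z_2)}}$. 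The derivative-cocycle factor $[g_i'(z_2)]^{-\lambda}$ is dealt with by differentiating the relation $\mathbf{g}\cdot(g_j z)=g_{\mathbf{g}\cdot j}(\mathbf{g}\cdot z)$ of Definition~\ref{def_sym_group}; the resulting difference between $g_j'$ and $g_{\mathbf{g}\cdot j}'\circ\mathbf{g}$ is a coboundary (a $\mathbf{g}$-dependent multiplicative factor of the form $\psi(z)/\psi(g_i z)$), so it does not affect the Fredholm determinant of the transfer operator and, equivalently, contributes trivially to all the fixed-point weights entering Theorem~\ref{thm_sym_factorization} because symmetries preserve displacement lengths and permute periodic orbits. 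Granting this, Theorem~\ref{thm_sym_factorization} together with Corollary~\ref{cor_fredholm_det} gives, on setting $d_{f,\chi}(\lambda,z,\beta)\defgr d_{V_{\lambda,\beta},\chi}(z)$, the product $d_f(\lambda,z,\beta)=\prod_{\chi\in\widehat{\mathbf{G}}}d_{f,\chi}(\lambda,z,\beta)$.

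\emph{Part (2).} Next I would feed the parameter-dependent potential into the regrouped expansion~\eqref{eq_symmetry_reduced_general}. Using the already-collected features of the $\mathbf{G}\times\mathbb{Z}$-action (notably $(g_w\circ\mathbf{g}_\mathbf{w})'(x^{\mathbf{w}}_\pm)=\mathrm{e}^{\mp T_{\gamma(g_{w^{m_\mathbf{w}}})}/m_\mathbf{w}}$ and the analogous identity for the potential at the genuine fixed points), one reads off, for small $|z|$, that $d_{f,\chi}(\lambda,z,\beta)=\exp\big(\sum_{m\geq 1}a_m^\chi(\lambda,\beta)z^m\big)$ with $a_m^\chi$ precisely the stated finite sum over pairs $([\mathbf{w}],k)$ with $n_\mathbf{w}k=m$; finiteness of $\mathcal{W}^\mathbf{g}_n$ and of $\mathbf{G}$ makes each $a_m^\chi$ a finite combination of exponentials in $(\lambda,\beta)$, hence entire. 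The recursion for the Taylor coefficients $d_n^\chi$ then follows verbatim from the Bell-polynomial computation in the proof of Corollary~\ref{coro_cycle_exp}, and the power series $1+\sum_n d_n^\chi z^n$ converges everywhere because $d_{f,\chi}=\det\big(\mathrm{id}-z\,\mathrm{L}^f_{\lambda,\beta}\big|_{\mathcal{H}_\chi}\big)$ is the Fredholm determinant of a trace-class operator, hence entire in all three variables. For the super-exponential bound I would write $d_n^\chi(\lambda,\beta)=(-1)^n\mathrm{Tr}\big(\bigwedge^n\mathrm{L}^f_{\lambda,\beta}\big|_{\mathcal{H}_\chi}\big)$ as in Corollary~\ref{coro_cycle_exp}, observe that $\mathcal{H}_\chi$ is an orthogonal summand for the $\mathbf{G}$-averaged (and hence equivalent) scalar product of Section~\ref{sym_red.1} so that the min-max principle gives $\mu_j\big(\mathrm{L}^f_{\lambda,\beta}\big|_{\mathcal{H}_\chi}\big)\leq\mu_j(\mathrm{L}^f_{\lambda,\beta})$, and then re-run the singular-value estimate of Corollary~\ref{coro_cycle_exp} unchanged.

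\emph{Part (3) and main obstacle.} Taking logarithms in Part~(1) gives $\log d_f(\lambda,z,\beta)=\sum_\chi\log d_{f,\chi}(\lambda,z,\beta)$ wherever all factors are non-vanishing, which includes a neighbourhood of $(z,\beta)=(1,0)$ once $\mathrm{Re}(\lambda)$ is large (there the series $\sum_m a_m^\chi(\lambda,\beta)z^m$ converges absolutely at $z=1$, so $d_{f,\chi}(\lambda,1,\beta)=\exp(\sum_m a_m^\chi(\lambda,\beta))\neq 0$). Differentiating in $\beta$ at $(z,\beta)=(1,0)$ and using Corollary~\ref{coro_log_deriv} for the left-hand side yields $Z_f(\lambda)=\sum_{\chi\in\widehat{\mathbf{G}}}\partial_\beta d_{f,\chi}(\lambda,1,0)/d_{f,\chi}(\lambda,1,0)$ for $\mathrm{Re}(\lambda)\gg 0$; since each $d_{f,\chi}(\,\cdot\,,1,0)$ is entire and not identically zero, every summand is meromorphic on $\mathbb{C}$ and the identity extends to all of $\mathbb{C}$ by uniqueness of meromorphic continuation. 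I expect the only delicate step to be the $\mathbf{G}$-invariance verification in Part~(1) for the factor $[g_i'(z_2)]^{-\lambda}$: one must check that the discrepancy produced by differentiating the defining relation of Definition~\ref{def_sym_group} is cohomologically trivial for the transfer operator, so that Theorem~\ref{thm_sym_factorization} genuinely applies; the remaining steps are routine combinations of the cited corollaries with the representation theory set up in Section~\ref{sym_red.1}.
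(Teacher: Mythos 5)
Your overall architecture — apply Theorem~\ref{thm_sym_factorization} to the parameter-dependent potential, then run the Bell-recursion and singular-value estimates as in Corollary~\ref{coro_cycle_exp}, and finally take a logarithmic derivative as in Corollary~\ref{coro_log_deriv} — matches the paper's proof, and you correctly flag the $\mathbf{G}$-invariance of the potential as the one nontrivial point. But your proposed resolution of that point has a genuine gap.

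You claim that the standing hypothesis $\int_{\gamma(g_w)}f=\int_{\gamma(g_{\mathbf{g}\cdot w})}f$, combined with density of fixed-point pairs and the identity theorem, forces \emph{pointwise} invariance $f_{\gamma_{(\mathbf{g}^{-1}z_1,\mathbf{g}^{-1}z_2)}}=f_{\gamma_{(z_1,z_2)}}$. That implication is false. The period integral of a closed geodesic coded by $w=(i_1,\dots,i_n)$ is the \emph{sum} $\sum_{j=1}^{n} f_{\gamma_{(\sigma^j x_-,\,\sigma^j x_+)}}$ over the cyclic orbit of the fixed points, so the hypothesis constrains only these sums, not the individual summands. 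Adding to $f_{\gamma}$ any coboundary $h-h\circ\sigma$ of an analytic function $h$ leaves every such cyclic sum unchanged (telescoping) while destroying pointwise $\mathbf{G}$-invariance; density plus analyticity cannot repair this, because the constrained quantities were never pointwise values to begin with. Your cocycle argument for the derivative factor $[g_i'(z_2)]^{-\lambda}$ is, in contrast, essentially correct (it is the chain-rule computation behind \cite[Lemma~5.6]{Weich.2016}), but it is asserted rather than verified, and crucially it does not help with the weight factor.

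The paper sidesteps the issue entirely: instead of trying to show $V_{\lambda,\beta}$ is (cohomologous to) a $\mathbf{G}$-invariant potential, it replaces $V_{\lambda,\beta}$ by the explicitly $\mathbf{G}$-averaged potential
\begin{equation*}
V_{\lambda,\beta}^{\mathbf{G}}(z_1,z_2) \defgr \prod_{\mathbf{g}\in\mathbf{G}} V_{\lambda,\beta}(\mathbf{g}\cdot z_1,\,\mathbf{g}\cdot z_2)^{1/\vert\mathbf{G}\vert}\,,
\end{equation*}
which is $\mathbf{G}$-invariant by construction, and then only checks the much weaker — and sufficient — statement that the \emph{iterates along closed words} agree at fixed points: $(V_{\lambda,\beta}^{\mathbf{G}})_w=(V_{\lambda,\beta})_w$. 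That identity follows directly from the $\mathbf{G}$-invariance of period integrals (applied once, at the level of the whole closed geodesic) together with the chain rule; by \eqref{eq_transfer_det} equality of iterates at fixed points already forces equality of Fredholm determinants, which is all Part~(1) needs. Your Parts~(2) and~(3) are fine and parallel the paper, but Part~(1) needs to be rebuilt along these lines rather than via pointwise invariance.
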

\begin{proof}
	The idea of proof is very much aligned with the material presented in Sections~\ref{dyn_det} and \ref{cycle_exp}:
	To derive (1) we would like to plug the concrete potential of Corollary~\ref{cor_fredholm_det} into the product decomposition of Theorem~\ref{thm_sym_factorization}.
	
	A slight difficulty with this strategy is the fact that the potential $V_{\lambda, \beta}$ of Corollary~\ref{cor_fredholm_det} is not $\mathbf{G}$-invariant due to the presence of the derivative term.
	This can be remedied by substituting it with a $\mathbf{G}$-averaged version possessing built-in $\mathbf{G}$-invariant, c.f.~\cite[Lemma~5.6]{Weich.2016}:
	\begin{equation*}
	V_{\lambda, \beta}^\mathbf{G} \defgr \prod_{\mathbf{g}\in\mathbf{G}} V_{\lambda, \beta}(\mathbf{g} \cdot z_1, \mathbf{g}\cdot z_2)^{1 / \vert\mathbf{G}\vert} ~.
	\end{equation*}
	By \eqref{eq_transfer_det} we only need to verify $(V_{\lambda, \beta})_w = (V_{\lambda, \beta}^\mathbf{G})_w$ for any closed word $w\in\mathcal{W}_n$ to prove that the transfer operators associated with these potentials give rise to the same Fredholm determinant.
	Using the $\mathbf{G}$-invariance of the period integrals of $f$ and the proof of \cite[Lemma~5.6]{Weich.2016} (which is essentially an application of the ordinary chain rule) we can now calculate
	\begin{equation*}
	\begin{split}
	\big( V_{\lambda, \beta}^\mathbf{G} \big)_w &= \exp\left( -\frac{\beta}{\vert\mathbf{G}\vert} \sum_{\mathbf{g}\in\mathbf{G}} \int_{\gamma(g_{\mathbf{g}\cdot w})} f \right) \cdot \mathrm{e}^{-\lambda T_{\gamma(g_w)}}\\
	&= \exp\left( -\lambda T_{\gamma(g_w)} - \beta \int_{\gamma(g_w)} f \right) = \big( V_{\lambda, \beta} \big)_w ~.
	\end{split}
	\end{equation*}
	This proves (1) if we define $d_{f, \chi}(\lambda, z, \beta) \defgr d_{V_{\lambda, \beta}^\mathbf{G}, \chi}(z)$.
	
	To obtain the expression for $d_{f, \chi}(\lambda, z, \beta)$ claimed in (2), we start with the following formula derived for a general potential in \eqref{eq_symmetry_reduced_general} above and valid for sufficiently small $\vert z\vert$
	\begin{equation*}
	\begin{split}
	d_{f, \chi}(\lambda, z, \beta)
	&= \exp\left( - d_\chi \sum_{k = 1}^\infty \sum_{[\mathbf{w}]\in \big[\mathcal{W}^\mathbf{G}_\mathrm{p} \big]} \frac{z^{k n_\mathbf{w}}}{k} \frac{\chi(\mathbf{g}_\mathbf{w}^k) \cdot \exp\big( -\frac{k\lambda}{m_\mathbf{w}} T_{\gamma(g_{w^{m_\mathbf{w}}})} - \frac{k\beta}{m_\mathbf{w}} \int_{\gamma(g_{w^{m_\mathbf{w}}})} f \big)}{\big( \mathrm{e}^{ k T_{\gamma(g_{w^{m_\mathbf{w}}})} / m_\mathbf{w} } - 1 \big) \big( 1 - \mathrm{e}^{ -k T_{\gamma(g_{w^{m_\mathbf{w}}})} / m_\mathbf{w} } \big)} \right)\\
	&= \exp\left( \sum_{n = 1}^\infty a_n^\chi(\lambda, \beta) z^n \right) ~,
	\end{split}
	\end{equation*}
	with the definition
	\begin{equation*}
	a_n^\chi(\lambda, \beta) \defgr - d_\chi \sum_{\substack{([\mathbf{w}], k) \in \big[\mathcal{W}^\mathbf{G}_\mathrm{p} \big]\times \mathbb{N}_{> 0} \\ n_\mathbf{w}\cdot k = n}} \frac{\chi(\mathbf{g}_\mathbf{w}^k)}{k} \frac{\exp\big( -\frac{k\lambda}{m_\mathbf{w}} T_{\gamma(g_{w^{m_\mathbf{w}}})} - \frac{k\beta}{m_\mathbf{w}} \int_{\gamma(g_{w^{m_\mathbf{w}}})} f \big)}{\big( \mathrm{e}^{kT_{\gamma(g_{w^{m_\mathbf{w}}})} / m_\mathbf{w}} - 1 \big) \big( 1 - \mathrm{e}^{- kT_{\gamma(g_{w^{m_\mathbf{w}}})} / m_\mathbf{w} } \big)} ~.
	\end{equation*}
	The theory of Bell polynomials already used in the proof of Corollary~\ref{coro_cycle_exp} now yields the claimed recursive relation.
	
	The super-exponential bounds can be derived in the same manner as in Corollary~\ref{coro_cycle_exp} by simply observing that the singular values of the restrictions satisfy $\mu_i(\mathrm{P}_\chi \mathrm{L}_V) \leq \Arrowvert\mathrm{P}_\chi \Arrowvert \mu_i(\mathrm{L}_V)$.
	We can therefore recycle our previous calculations and arrive at the same bounds with possibly different constants.
	
	Lastly we derive (3) in the following straightforward manner:
	Exchanging the logarithm of the product over $\widehat{\mathbf{G}}$ with a sum over logarithms in Corollary~\ref{coro_log_deriv} lets us calculate
	\begin{equation*}
	\begin{split}
	Z_f(\lambda) &= \partial_\beta \log\left( \det\big( \mathrm{id} - z \mathrm{L}_{\lambda, \beta}^f \big) \right) \bigg|_{z=1, \beta=0}\\
	&= \partial_\beta \log\left( \det\big( \mathrm{id} - z \mathrm{L}_{V_{\lambda, \beta}^\mathbf{G}} \big) \right) \bigg|_{z=1, \beta=0}\\
	&= \sum_{\chi\in\widehat{\mathbf{G}}} \partial_\beta \log\left( d_{f, \chi}(\lambda, z, \beta) \right) \bigg|_{z=1, \beta=0} ~,
	\end{split}
	\end{equation*}
	finishing our proof.
\end{proof}

For practical purposes the following form for the coefficients occurring in the base of the recursion is more suitable:
\begin{equation*}
\begin{split}
a_m^\chi(\lambda, \beta) &= - \frac{d_\chi}{m} \sum_{\substack{[\mathbf{w}] \in \big[\mathcal{W}^\mathbf{G}_\mathrm{p} \big] \\ n_\mathbf{w} || m}} n_\mathbf{w}\cdot \chi(\mathbf{g}_\mathbf{w}^{m / n_\mathbf{w}}) \frac{\exp\big( -\frac{m(\lambda - 1)}{n_\mathbf{w} m_\mathbf{w}} T_{\gamma(g_{w^{m_\mathbf{w}}})} - \frac{m\beta}{n_\mathbf{w} m_\mathbf{w}} \int_{\gamma(g_{w^{m_\mathbf{w}}})} f \big)}{\big( \mathrm{e}^{mT_{\gamma(g_{w^{m_\mathbf{w}}})} / (n_\mathbf{w} m_\mathbf{w})} - 1 \big)^2} ~,
\end{split}
\end{equation*}
where $n || m$ for $n, m\in \mathbb{N}$ indicates that $n$ divides $m$.
Rescaling both lengths and period integrals by $1 / (n_\mathbf{w} m_\mathbf{w})$ like this yields a rather convenient way for vectorized evaluation of the coefficients $a_m^\chi$.

\begin{remark}
	The proposition does not reveal if and why any improvement in terms of convergence should be expected from the coefficients $d_n^\chi$ compared with their non-symmetry reduced counterparts $d_n$.
	The practical calculations below will reveal a significant improvement, though.
	For a theoretical discussion of this phenomenon we refer to \cite[Appendix~B]{Weich.2016}.
\end{remark}

\begin{remark}
	If we want to apply Proposition~\ref{prop_sym_factorization} we have to adjust the families of test functions presented in Section~\ref{algo} to satisfy $\mathbf{G}$-invariance of their period integrals.
	But this is fairly straightforward:
	We simply replace the expressions derived in \eqref{eq_calcInt_fundamental} and \eqref{eq_poincare_integrals} by similar sums but over points $x_j + \mathrm{i}y_j$ or $(\gamma^i_-, \gamma^i_+)$, respectively, derived not only from the original word $w$ but from all words $\mathbf{g}\cdot w$ for $\mathbf{g}\in \mathbf{G}$.
	A subsequent normalization by $\vert\mathbf{G}\vert$ then yields a suitable prescription.
\end{remark}

\begin{algorithm}
	\caption{%
		Pseudo-code for the calculation of symmetry reduced determinants $d_\chi$ for a given character $\chi$.
		Iterating this procedure over all characters $\chi\in\widehat{\mathbf{G}}$ and combining it with the
		first block of Snippet~\ref{alg1} yields a means for the symmetry reduced calculation of approximations
		to invariant Ruelle distributions.
	}\label{alg3}
	
	\KwIn{width $\sigma > 0$, center $gK\in G/K$, resonance $\lambda_0\in\mathbb{C}$, character $\chi\in \widehat{\mathbf{G}}$, cut-off $N\in\mathbb{N}$}
	
	\Comment{calculate Bell recursion as in Snippet~\ref{alg1}, but with $a[]$ replaced by modified initial terms $a^\chi[]$}
	
	\Comment{calculate modified initial terms $a^\chi[k]$ in Bell recursion}
	$a^\chi[k] \gets 0$\;
	$\partial a^\chi[k] \gets 0$\;
	\For{$[\mathbf{w}]\in\left[ \mathcal{W}_p^\mathbf{G} \right] ~\mathrm{where}~ n_\mathbf{w} || k$}{
		$a^\chi[k] \gets a^\chi[k] - n_\mathbf{w} \chi(\mathbf{g}_\mathbf{w}^{k / n_\mathbf{w}}) \frac{\exp(-(\lambda_0 - 1) k \ell(g_w) / (n_\mathbf{w} m_\mathbf{w}))}{(\mathrm{e}^{k \ell(g_w) / (n_\mathbf{w} m_\mathbf{w})} - 1)^2}$\;
		$\partial_\beta a[k] \gets \partial_\beta a[k] + n_\mathbf{w} \chi(\mathbf{g}_\mathbf{w}^{k / n_\mathbf{w}}) k \frac{\exp(-(\lambda_0 - 1) k \ell(g_w)) / (n_\mathbf{w} m_\mathbf{w})}{(\mathrm{e}^{k \ell(g_w) / (n_\mathbf{w} m_\mathbf{w})} - 1)^2}\cdot \mathrm{calcInt}(w)$ \;
		\Comment*[r]{$\mathrm{calcInt}$ implements \eqref{eq_calcInt_fundamental}}
		$a^\chi[k] \gets \frac{d_\chi}{k} a^\chi[k]$\;
		$\partial_\beta a[k] \gets \frac{d_\chi}{k} \partial_\beta a[k]$\;
	}
	
	\KwResult{approximation of one summand in the symmetry decomposition of $Z_{f_{\sigma, gK}\circ \pi}(\lambda_0)$ according to Proposition~\ref{prop_sym_factorization}}
\end{algorithm}

\subsection{Example Surfaces}\label{sym_red.3}

In order to make use of Proposition~\ref{prop_sym_factorization} in practice one requires example surfaces with sufficiently rich symmetry groups.
To meet this requirement we provide two classes of such examples covering both topological possibilities for Schottky surfaces of rank $r = 2$:
The three-funneled surfaces and the funneled tori.

\subsubsection{Three-funneled surface}\label{sym_red.three_funnel}

The family of three-funneled surfaces constitutes the main class of examples in the original paper of Borthwick \cite{Borthwick.2014} and many subsequent papers on the numerical calculation of resonances \cite{Weich.2016,Pohl.2020}.
Due to this prevalence will we refrain from giving too many details and simply state their generators:
\begin{equation*}
	g_1 \defgr \begin{pmatrix}
	\cosh(\ell_1 / 2) & \sinh(\ell_1 / 2) \\ \sinh(\ell_1 / 2) & \cosh(\ell_1 / 2)
	\end{pmatrix}
	, \qquad
	g_2 \defgr \begin{pmatrix}
	\cosh(\ell_2 / 2) & a \sinh(\ell_2 / 2) \\ a^{-1} \sinh(\ell_2 / 2) & \cosh(\ell_2 / 2)
	\end{pmatrix}
\end{equation*}
The three numbers $\ell_1, \ell_2, \ell_3 > 0$ parameterize the family and can be interpreted geometrically as the lengths of the closed geodesics winding around the funnels.
The parameter $a$ is not free but must be chosen such that the condition $\mathrm{tr}(g_1 g_2^{-1}) = -2\cosh(\ell_3 / 2)$ is fulfilled.
We follow the notation introduced in \cite{Borthwick.2014} and denote the generated surface by $X(\ell_1, \ell_2, \ell_3)$.

The following realization $\mathbf{G} = \{ \mathrm{e}, \sigma_1, \sigma_2, \sigma_1\sigma_2 \}$ of Klein's four-group is a symmetry group of the generators given above provided that $\ell_1 = \ell_2$:\footnote{
	For an elementary proof see~\cite[Example~3.2]{Weich.2016}.
}
\begin{equation*}
	\sigma_1 = \begin{pmatrix}
	-1 & 0 \\ 0 & 1
	\end{pmatrix}
	, \qquad
	\sigma_2 = \begin{pmatrix}
	0 & \sqrt{a} \\ 1/\sqrt{a} & 0
	\end{pmatrix} ~,
\end{equation*}
where both matrices act on $\mathbb{H}$ via Möbius transformations.
The action on the letters is then given by
\begin{equation}\label{eq_sym_action}
\begin{split}
&\sigma_1(1) = 3,~ \sigma_1(2) = 4,~ \sigma_1(3) = 1,~ \sigma_1(4) = 2\\
&\sigma_2(1) = 2,~ \sigma_2(2) = 1,~ \sigma_2(3) = 4,~ \sigma_2(4) = 3 ~.
\end{split}
\end{equation}

The final ingredient for the calculation of symmetry reduced zeta functions is the character table of $\mathbf{G}$.
This well-known data is given in Table~\ref{tab1}.

\begin{table}[h!]
	\centering
	\caption{Character table for the symmetry group of three-funneled surfaces.}
	\begin{tabular}{|| c | c | c | c | c ||}
		 \hline
		 & $\mathrm{e}$ & $\sigma_1$ & $\sigma_2$ & $\sigma_1\sigma_2$\\
		 \hline
		 \hline
		 A & $1$ & $1$ & $1$ & $1$\\
		 \hline
		 B & $1$ & $-1$ & $1$ & $-1$\\
		 \hline
		 C & $1$ & $1$ & $-1$ & $-1$\\
		 \hline
		 D & $1$ & $-1$ & $-1$ & $1$\\
		 \hline
	\end{tabular}
	\label{tab1}
\end{table}

In the less symmetric case $\ell_1 \neq \ell_2$ the generators lose their symmetry with respect to $\sigma_2$ but retain $\{ \mathrm{e}, \sigma_1 \}$ as their symmetry group.
This smaller group has only two irreducible representations, namely the trivial one and one that equals $-1$ on $\sigma_1$.
Both are one-dimensional.

\begin{figure}
	\centering
	\includegraphics[scale=0.55]{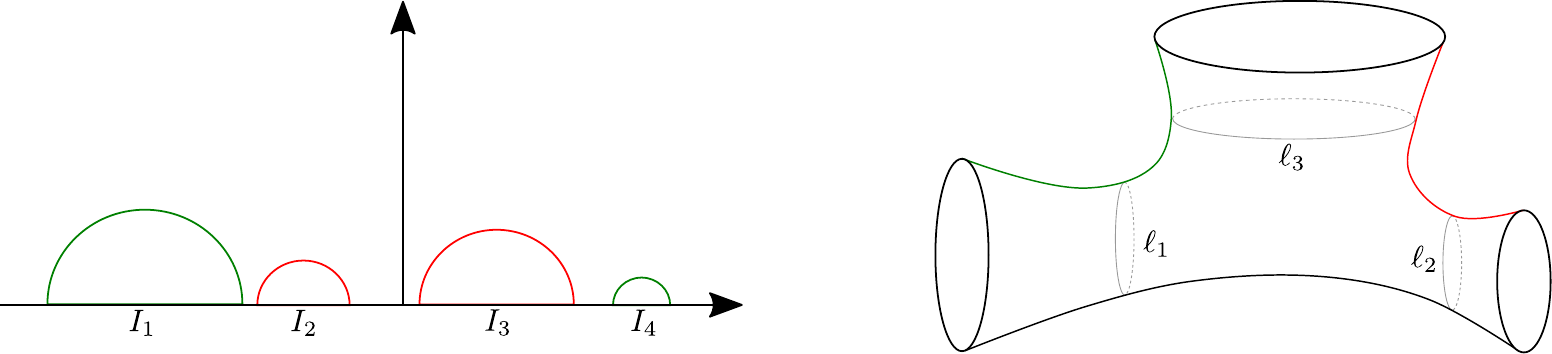}
	\caption{%
		The right-hand side illustrates the three-funneled surface with boundary lengths $\ell_1, \ell_2, \ell_3$
		embedded (non-isometrically) into three-dimensional Euclidean space. Highlighted in red and green is the
		Poincar\'{e} section (projected onto the surface) corresponding to the fundamental domain on the left.
	}\label{fig_threefunnel_sym}
\end{figure}

\subsubsection{Funneled torus}\label{sym_red.torus}

Our second family of surfaces is also well-known in the literature.
Generators are given by
\begin{equation*}
\begin{split}
	g_1 &\defgr \begin{pmatrix}
	\exp(\ell_1 / 2) & 0\\ 0 & \exp(-\ell_1 / 2)
	\end{pmatrix}~,\\
	g_2 &\defgr \begin{pmatrix}
	\cosh(\ell_2 / 2) - \cos(\varphi)\sinh(\ell_2 / 2) & \sin^2(\varphi)\sinh(\ell_2 / 2) \\
	\sinh(\ell_2 / 2) & \cosh(\ell_2 / 2) + \cos(\varphi)\sinh(\ell_2 / 2)
	\end{pmatrix} ~,
\end{split}
\end{equation*}
where again three parameters $\ell_1, \ell_2$, and $\varphi$ are needed to specify a concrete member.
Geometrically they describe the lengths of two closed geodesics on the surface and the angle between them.
Obviously the generator $g_1$ contains the boundary point $\infty$ in its fundamental interval which makes it inaccessible for our algorithm because we use the fundamental intervals directly as coordinates of the Poincar\'{e} section.
Conjugating the generators by a simple rotation yields a new pair $\widetilde{g}_1$ and $\widetilde{g}_2$ of generators which do not suffer from this problem.
It turns out that $\pi / 8$ is a particularly handy value in the maximally symmetric case $\ell_1 = \ell_2$ and $\varphi = \pi / 2$ as it leads to a very symmetric arrangement of fundamental circles (see also \cite[Section~5.3]{Pohl.2020} where the boundary point $\infty$ must be rotated outside of the fundamental intervals for somewhat similar reasons).
In this case the conjugated generators are of the explicit form
\begin{equation*}
\begin{split}
	\widetilde{g}_1 &\defgr \begin{pmatrix}
	\cosh(\ell / 2) + \sinh(\ell / 2) / \sqrt{2} & \sinh(\ell / 2) / \sqrt{2}\\
	\sinh(\ell / 2) / \sqrt{2} & \cosh(\ell / 2) - \sinh(\ell / 2) / \sqrt{2}
	\end{pmatrix}\\
	\widetilde{g}_2 &\defgr \begin{pmatrix}
	\cosh(\ell / 2) - \sinh(\ell / 2) / \sqrt{2} & \sinh(\ell / 2) / \sqrt{2}\\
	\sinh(\ell / 2) / \sqrt{2} & \cosh(\ell / 2) + \sinh(\ell / 2) / \sqrt{2}
	\end{pmatrix} ~.
\end{split}
\end{equation*}
Again we adopt the same notation as in \cite{Borthwick.2014} and denote the funneled tori generated by $\widetilde{g}_1$ and $\widetilde{g}_2$ by $Y(\ell_1, \ell_2, \varphi)$.

The generators $\widetilde{g}_1$ and $\widetilde{g}_2$ again admit a realization of Klein's four-group as their symmetry group.
Concrete symmetries are given by the Möbius transformations induced via
\begin{equation*}
\sigma_1 = \begin{pmatrix}
0 & 1\\ 1 & 0
\end{pmatrix}
, \qquad
\sigma_2 = \begin{pmatrix}
0 & 1 \\ -1 & 0
\end{pmatrix} ~,
\end{equation*}
with their action on the symbols being the same as in \eqref{eq_sym_action}.
One can easily verify these relations by elementary matrix calculations of the type $\sigma_1 \widetilde{g}_1 \sigma_1^{-1} = \widetilde{g}_2$.
The character table thus coincides with the one given above in Table~\ref{tab1}.

\begin{figure}
	\centering
	\includegraphics[scale=0.55]{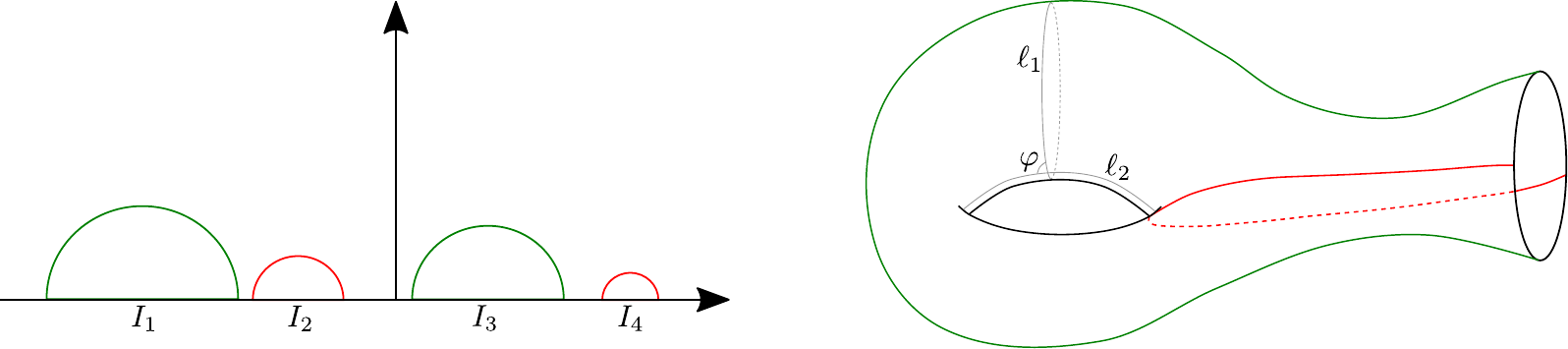}
	\caption{%
		Here the right-hand side illustrates the funneled torus with parameters $\ell_1, \ell_2, \phi$
		embedded (non-isometrically) into three-dimensional Euclidean space. Highlighted in red and green is the
		Poincar\'{e} section (projected onto the surface) corresponding to the fundamental domain on the left.
	}\label{fig_funneltorus_sym}
\end{figure}


\section{Numerical Results}\label{numerics}

This final section finishes the present paper by presenting some numerical calculations
which were obtained with the tools developed up to this point. We begin with a comparison of
convergence rates of invariant Ruelle distributions depending on the particular symmetry
group used for a given surface.

\begin{remark}\label{remark_residues}
	To do this we still require a practically feasible approach for the calculation of residues of $Z_f(\lambda)$.
	If one can calculate the function $Z_f(\lambda)$ efficiently\footnote{
		E.g. by vectorization or via a distributed computational scheme.
	}
	for a large number of support points $\lambda\in\mathbb{C}$, i.e. on large arrays, then it would be possible to calculate its residue at $\lambda_0$ very generically via the classical integral formula
	\begin{equation*}
	\underset{\lambda=\lambda_0}{\mathrm{Res}} \left[ Z_f(\lambda) \right] = \frac{1}{2\pi\mathrm{i}} \int_C Z_f(\lambda)~\mathrm{d}\lambda ~,
	\end{equation*}
	where a convenient choice for the contour $C$ could e.g. be a sufficiently small rectangle with $\lambda_0$ at its center.
	This integral can then be evaluated using numerical quadrature methods.
	
	If a large number of function evaluations is too computationally expensive then the following well-known formula offers an alternative:
	\begin{equation*}
	\underset{\lambda=\lambda_0}{\mathrm{Res}} \left[ Z_f(\lambda) \right] = \frac{1}{(N-1)!}
	\lim_{\lambda\rightarrow \lambda_0}\left( \frac{\mathrm{d}}{\mathrm{d}\lambda} \right)^{N-1}
	\left[ (\lambda - \lambda_0)^N Z_f(\lambda) \right] ~,
	\end{equation*}
	where $N$ denotes the order of the pole $\lambda_0$.
	If $N = 1$, i.e. $\lambda_0$ is a simple pole, then this general formula takes a particularly simple shape and
	plugging in the logarithmic derivative of the dynamical determinant $d_f$ yields for the simple case
	\begin{equation} \label{eq_residue_practical}
	\underset{\lambda=\lambda_0}{\mathrm{Res}} \left[ Z_f(\lambda) \right] = \frac{\partial_\beta d_f(\lambda_0, 1, 0)}{\partial_\lambda d_f(\lambda_0, 1, 0)} ~.
	\end{equation}
	This expression can be evaluated directly because calculation of $\partial_\lambda d_f$ requires only a straightforward modification of our algorithm for $d_f$ and $\partial_\beta d_f$.
	The numerics presented below feature only simple poles so we used \eqref{eq_residue_practical} throughout our implementations.\footnote{
		Our resonance calculations were obtained with a root finding algorithm that combines the argument principle
		from complex analysis with the classical Newton iteration. In particular our procedure always yields pairs
		of resonances and corresponding orders.
	}
\end{remark}

\subsection*{Funneled Torus Experiments}

The first system we consider is the funneled torus for the two cases of its full $4$-element Kleinian symmetry
group as described in Section~\ref{sym_red.torus} and without any symmetry reduction.
The quantum resonance spectrum for the concrete example $Y\left(10, 10, \frac{\pi}{2}\right)$ was already obtained
numerically by Borthwick~\cite[Figure~11]{Borthwick.2014} by means of Selberg's zeta function.
In Figure~\ref{fig_torus_resonances}
we recover this spectrum but now using the dynamical determinant $d_f$ with constant weight function $f$.
This yields the Pollicott-Ruelle resonances of $Y\left(10, 10, \frac{\pi}{2}\right)$ which by~\cite{Weich.2018}
coincide with the quantum spectrum after a shift by $-1$. Compared with previous calculations of quantum
resonances in the literature our numerics therefore illustrate this theorem about the relationship between
classical and quantum resonances.

\begin{figure}[h]
	\centering
	\makebox[\textwidth][c]{
		\includegraphics[width=1.25\textwidth,trim={0mm 20mm 0mm 0mm}]{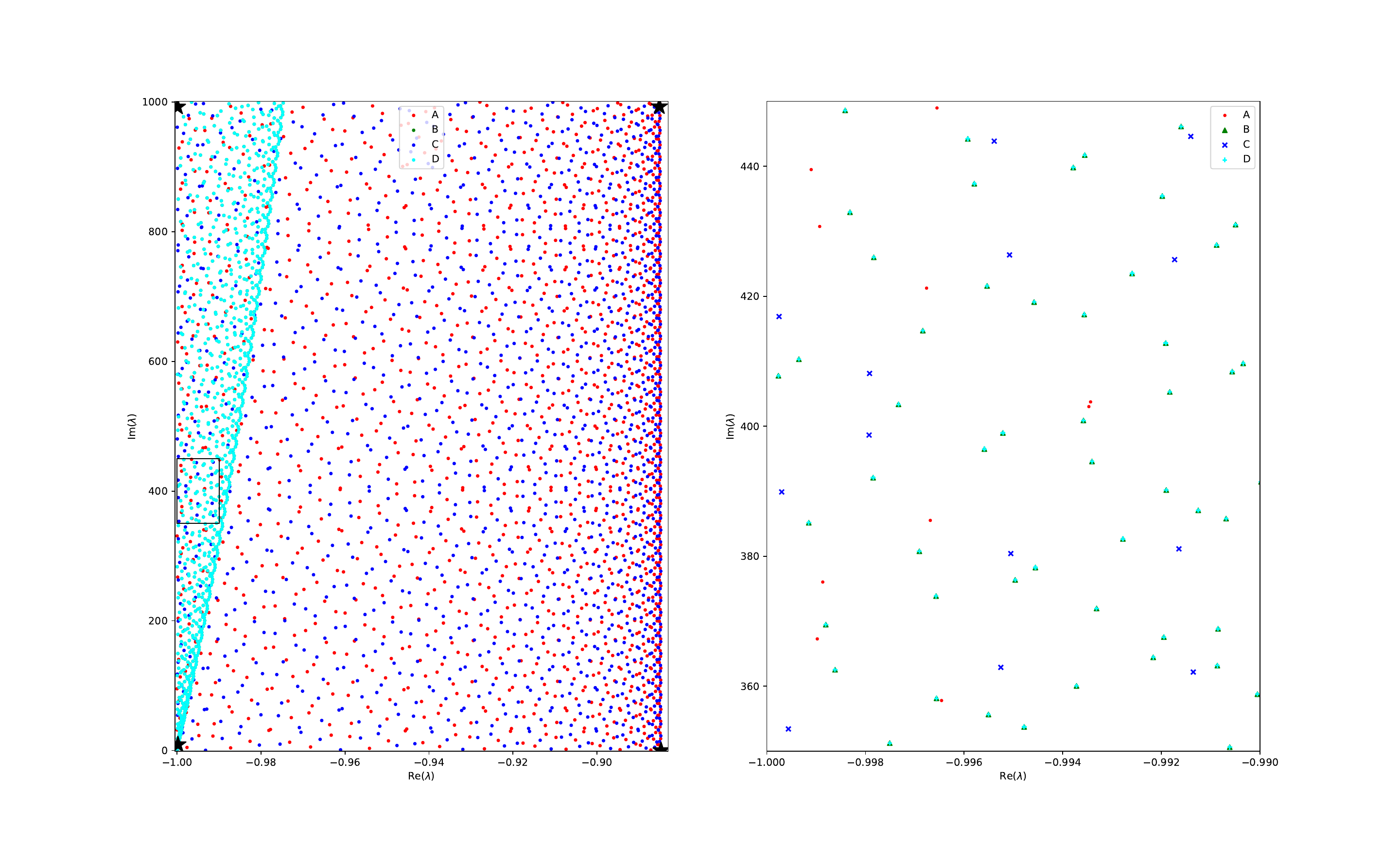}
	}
	\caption{%
		Resonances of the funneled torus $Y\left(10, 10, \frac{\pi}{2}\right)$ calculated in the symmetry reduction by the
		Klein four-group. The right-hand side shows a zoom into a small region of the left-hand side plot. Notice
		how the representations labeled B and D split resonances of multiplicity two into pairs of resonances of multiplicity
		one. In particular this simplifies the formula for the calculation of invariant Ruelle distributions
		via residues as described above.
		The four resonances marked by black stars in the left plot were used for investigations of Ruelle distributions below.
	}\label{fig_torus_resonances}
\end{figure}

We begin by investigating in more detail the first resonance of $Y(10, 10, \pi/2)$ which is located at
$\delta - 1 \approx -0.8847$ with $\delta$ the Hausdorff dimension of the limit set. The result of numerically
calculating $t^\Sigma_{\lambda_0, \sigma}$ is shown in Figure~\ref{fig_torus_first_distribution} as plots of
three different quantities: The left-most plot shows the real part of the distribution which gets complemented by
the imaginary part in the middle. The invariant
Ruelle distribution associated with the first resonance coincides with the Bowen-Margulis measure so it
should be expected that the numerical approximation is real-valued and positive which is exactly the case in the shown plot.
The right-most coordinate square features a combination of real and imaginary parts: There the complex argument
is indicated through the color of the peaks and the absolute value of $t^\Sigma_{\lambda_0, \sigma}$ was encoded
as the lightness of the particular color. The mapping of colors to complex arguments is simply given by the
angle on the standard color wheel in the HSB/HSL encoding of RGB shifted by $\pi$, i.e.~light blue corresponds
to an argument of $0$ while red corresponds to $\pi$ (see Figure~\ref{fig_colors}).
From this illustration it is immediately clear that the distribution is remarkably homogeneous within each square
$I_i\times I_j$ of the coordinate domain parameterizing the Poincar\'{e} section.

\begin{figure}
	\centering
	\includegraphics[scale=0.24]{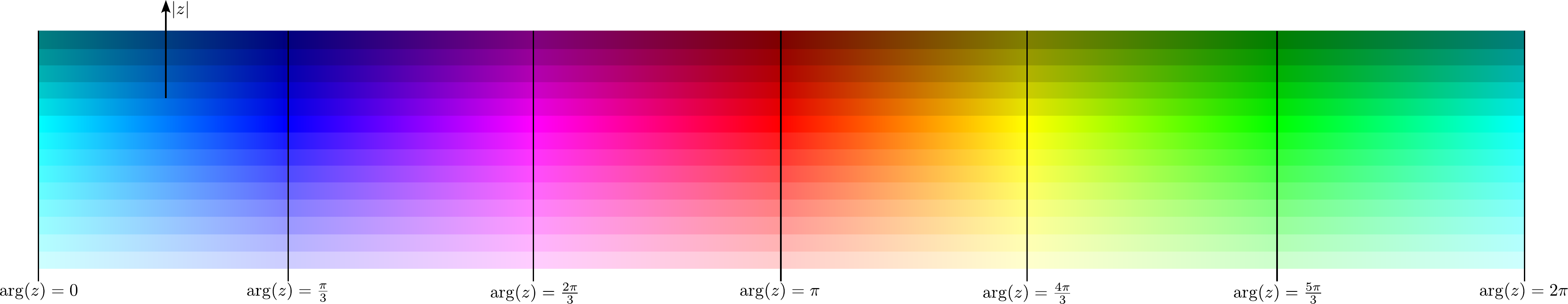}
	\caption{%
		Sketch of how complex arguments $\mathrm{arg}(z)$ map to different colors in the phase plots above. The absolute value
		$\vert z\vert$ maps to the lightness of the color as a second dimension with darker colors corresponding to higher
		values of $\vert z\vert$.
	}\label{fig_colors}
\end{figure}

\begin{figure}[h]
	\centering
	\makebox[\textwidth][c]{
		\includegraphics[width=1.10\textwidth,trim={85mm 185mm 65mm 5mm}]{%
			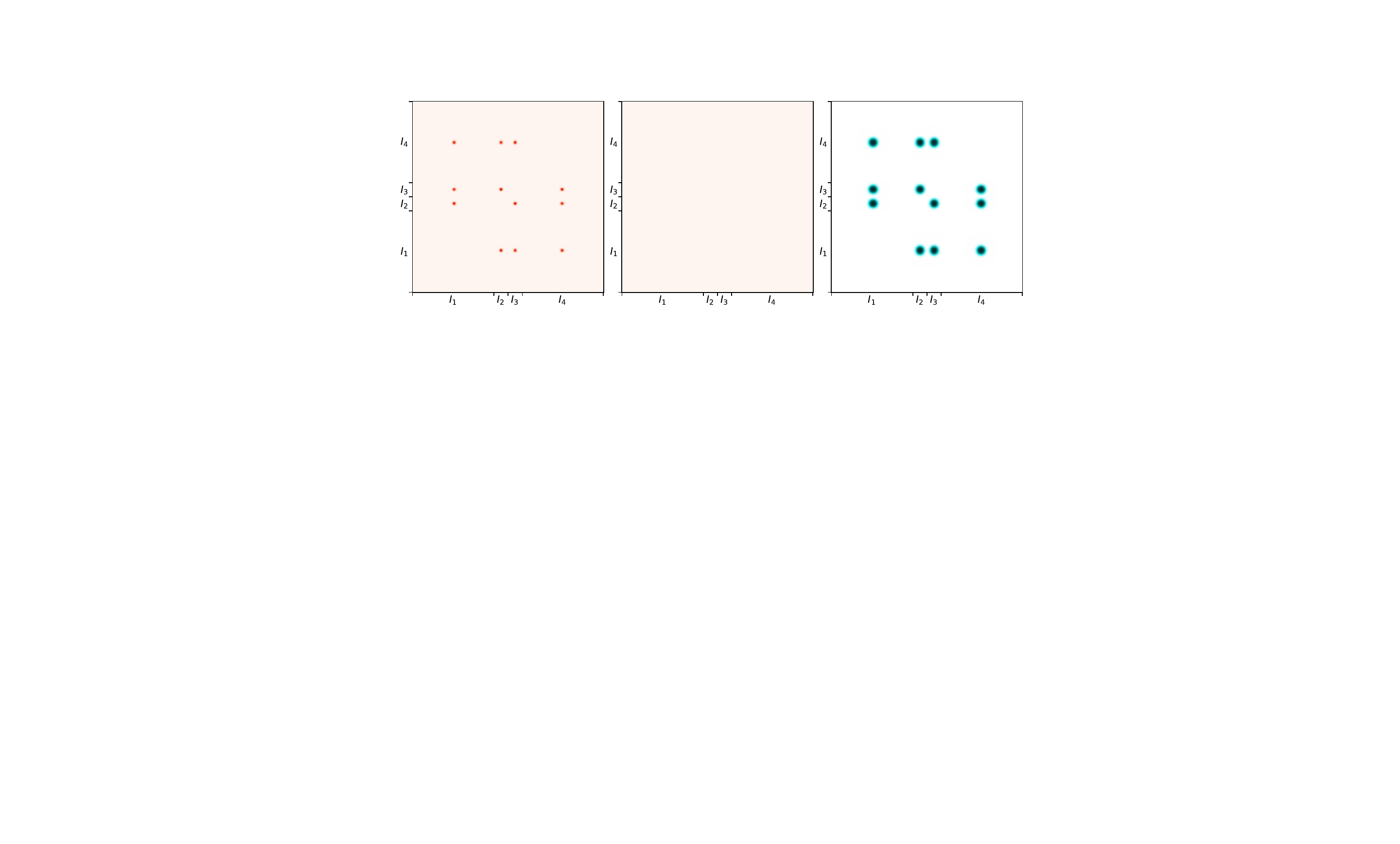}
	}
	\caption{%
		Invariant Ruelle distribution $t^\Sigma_{\lambda_0, \sigma}$ on the canonical Poincar\'{e} section
		$\Sigma$ of the funneled torus $Y(10, 10, \pi/2)$ associated with the first resonance
		$\lambda_0 \approx -0.8847$. This figure uses the width $\sigma = 10^{-3}$ and $n_\mathrm{max} = 5$
		summands in the cycle expansion of the dynamical determinant $d_f$. The fundamental intervals $I_i$
		which parameterize the section are ordered as qualitatively shown in Figure~\ref{fig_funneltorus_sym}
		The left and middle columns show the
		real and imaginary parts of the distribution. The right column encodes both real and imaginary parts by
		using the complex argument to determine the color (as an angle on the color wheel) and the absolute
		value to provide the lightness.
	}\label{fig_torus_first_distribution}
\end{figure}

Next we consider additional resonances from Figure~\ref{fig_torus_resonances} under the aspect of how well the
associated distributions converge in practice. It is reasonable to expect that the outer edges of the square
$[-1, \delta - 1] + [0, 1000]\mathrm{i}$
of the complex plane where resonances were calculated should correspond to the best respectively worst rates of
convergence. To increase the resolution of the distribution the plots were additionally restricted to
coordinates within strictly smaller intervals
\begin{equation*}
	(\widetilde{I}_2\cup \widetilde{I}_3\cup \widetilde{I}_3) \times (\widetilde{I}_1\cup \widetilde{I}_2) ~,
	\qquad \widetilde{I}_i\subset I_i ~,
\end{equation*}
where the location of the new intervals within the original fundamental domain is illustrated in
Figure~\ref{fig_domain_refinement}.
\begin{figure}[h]
	\centering
	\includegraphics[scale=0.42]{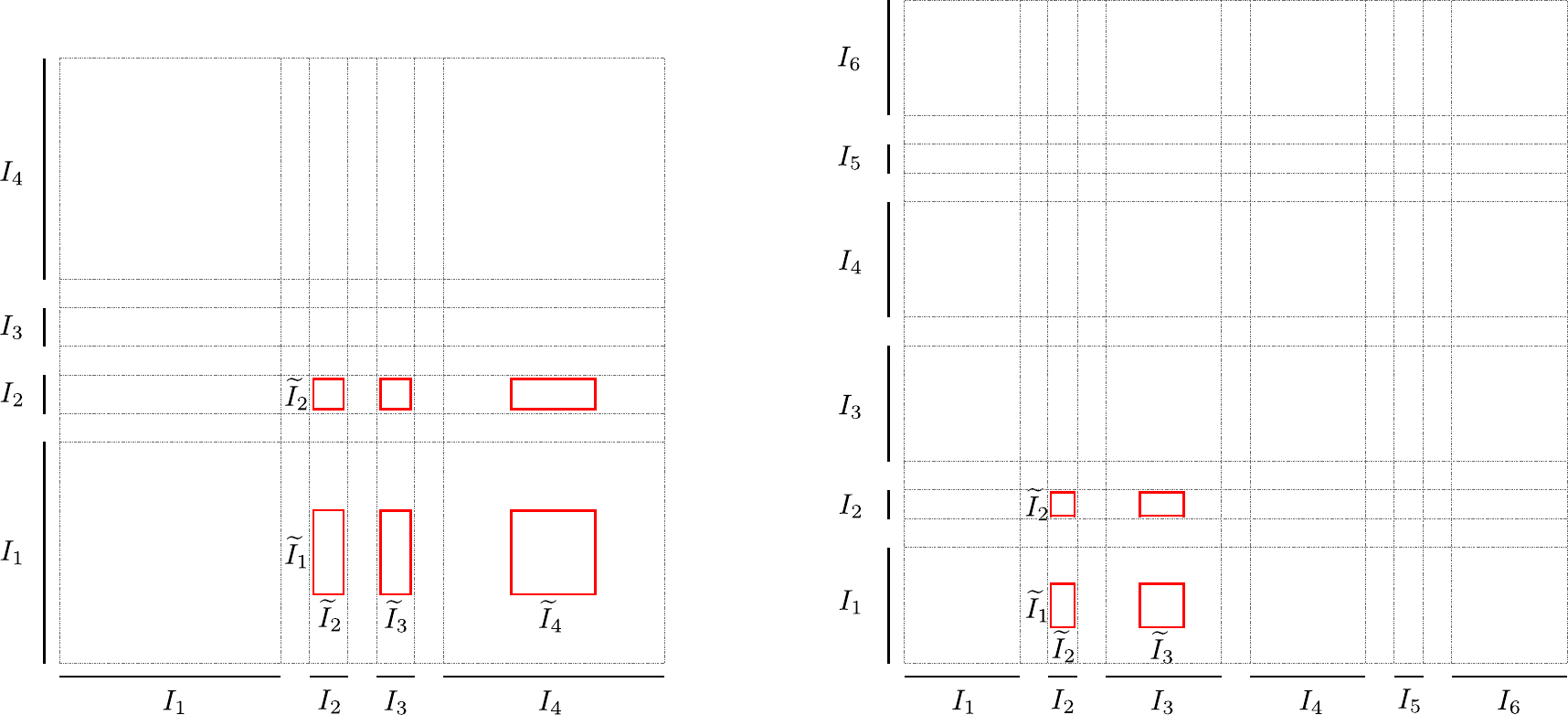}
	\caption{%
		Location and relative size of the refined coordinate domains $\widetilde{I}_i\subset I_i$ compared
		to the original fundamental intervals of the funneled torus $X(10, 10, \pi/2)$ (left) and the
		three-funneled surface $X(12, 12, 12)$ (right).
	}\label{fig_domain_refinement}
\end{figure}
This very basic \emph{domain refinement} already reduces the
amount of redundancy in the resulting plots significantly by excluding large areas where the distributions
vanish and exploiting the internal symmetries of the distributions as prominently visible in
Figure~\ref{fig_torus_first_distribution}.

The resulting plots for a collection of four resonances are depicted in Figure~\ref{fig_torus_reduced_distributions}:
Here the columns correspond to the same resonance whereas the rows share a common cutoff order $n_\mathrm{max}$,
i.e.~the number of summands used in the cycle expansion.
From this figure we see that even the distribution associated with the resonance
$\lambda_0\approx -0.9999 + 992.4\mathrm{i}$ in the upper left
of the considered resonance domain converges nicely already at $n_\mathrm{max} = 3$.

\begin{figure}[h]
	\centering
	\makebox[\textwidth][c]{
		\includegraphics[width=1.5\textwidth,trim={0mm 20mm 0mm 0mm}]{%
			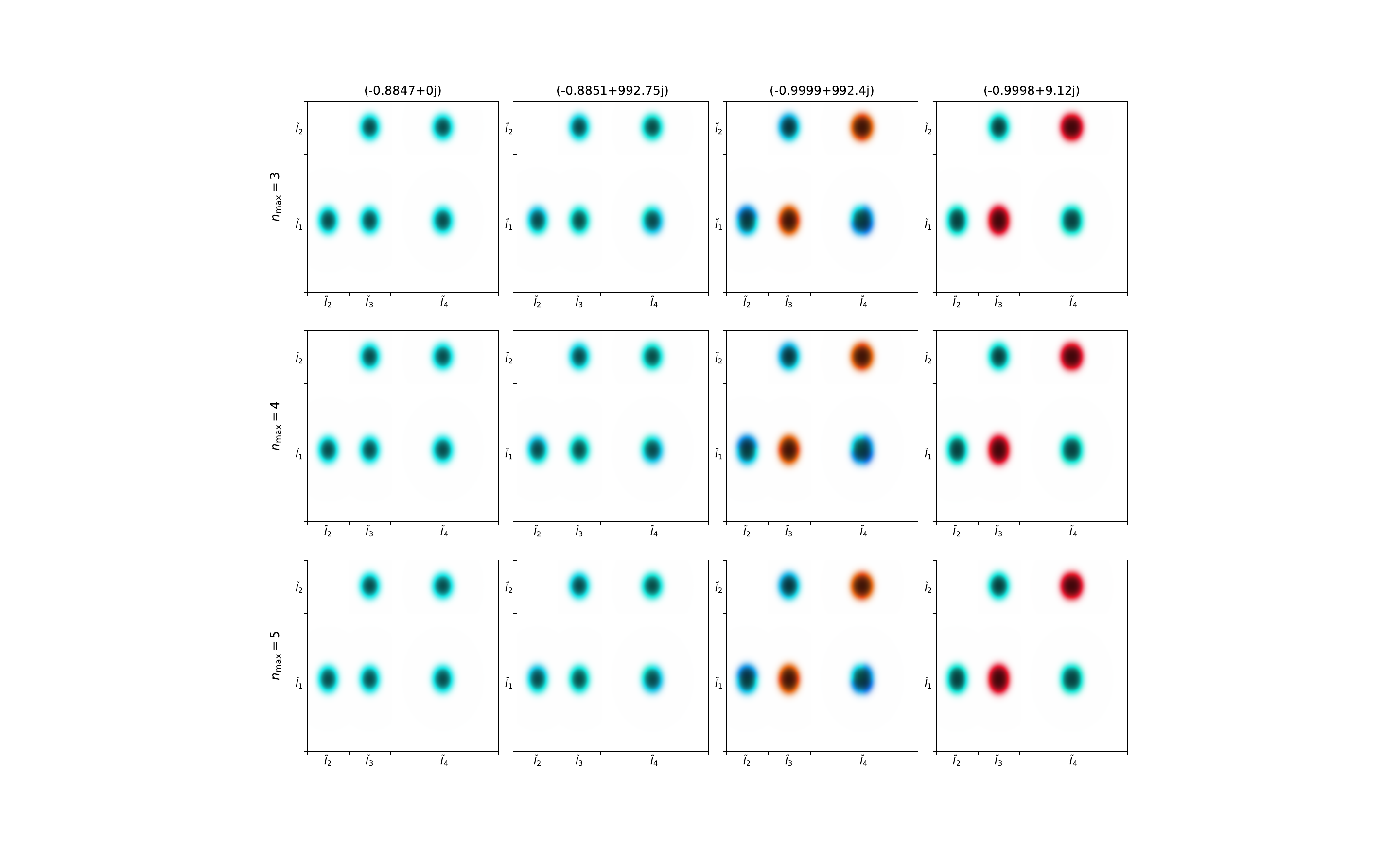}
	}
	\caption{%
		Fully symmetry reduced invariant Ruelle distributions $t^\Sigma_{\lambda_0, \sigma}$ on the canonical
		Poincar\'{e} section $\Sigma$ of the funneled torus $Y(10, 10, \pi/2)$ evaluated at four
		different choices of resonance $\lambda_0$ (marked in Figure~\ref{fig_torus_resonances})
		with $\sigma = 10^{-3}$. The three different rows
		show the numerical results of using (from top to bottom) $n_\mathrm{max}\in \{3, 4, 5\}$ summands in
		the cycle expansion. Following along any of the four columns shows that the presented plots have
		converged rather well already at $n_\mathrm{max} = 3$. Due to symmetries in the distributions as visible
		in Figure~\ref{fig_torus_first_distribution} it suffices to consider as the coordinate domain the refined subset
		$(\widetilde{I}_2\cup \widetilde{I}_3\cup \widetilde{I}_4) \times (\widetilde{I}_1\cup \widetilde{I}_2)$ of Figure~\ref{fig_domain_refinement}.
	}\label{fig_torus_reduced_distributions}
\end{figure}

One particularly noteworthy feature of these plots is the large degree of similarity between the first and second
columns both with respect to the absolute value as well as the complex argument.
An explanation for this qualitative agreement might be the fact that both associated resonances are located near the
global spectral gap at
\begin{equation*}
	\mathrm{Re}(\lambda) = \delta - 1
\end{equation*}
and a first conjecture could be that recurrence to this gap which was observed in previous investigations of quantum
resonances on convex-cocompact hyperbolic surfaces is related to properties of resonant states.

By analogy with the application of cycle expansion to resonances it is straightforward to conjecture that symmetry
reduction should improve the rate of convergence of invariant Ruelle distributions even for the (rather small)
Klein four-group used with $Y(10, 10, \pi/2)$. To support this claim the experiment above was repeated with the
trivial one-element symmetry group $\{e\}$.\footnote{
	The $\mathbb{Z}$-symmetry was still factored out, though, which in itself reduces the computational cost
	of dynamical determinant evaluation by quite a margin.
}
Once converged our numerical results should be independent from the symmetry group used: While the approximation
$t^\Sigma_{\lambda_0, \sigma}$ does contain a symmetric version of the Gaussian test functions this should not
matter due to the global invariant Ruelle distribution also being invariant with respect to the full symmetry
group of the surface.

The resulting plots in Figure~\ref{fig_torus_reduced_distributions_trivial} do indeed coincide with those of
Figure~\ref{fig_torus_reduced_distributions} while at the same time exhibiting worse convergence properties:
The distribution at $\lambda_0\approx -0.9999 + 992.4\mathrm{i}$ has not converged until $n_\mathrm{max} = 7$.
While not visible in the figure itself this worsened convergence rate also shows up for the other three resonances
as can be seen by considering the relative errors of order $N$
\begin{equation*}
	\frac{\big| d_{N}^\chi(\lambda_0, 0) \big|}{\big| 1 + \sum_{n = 1}^{N} d_n^\chi(\lambda_0, 0) \big|} ~.
\end{equation*}
Note that these quantities are still functions of the coordinates $(x_-, x_+)$ due to their dependence on the
particular weight function used.\footnote{
	At this point it is slightly inconvenient that in our notation for the individual summands $d_n^\chi$ of
	$d_{f, \chi}$ the weight $f$ is only implicit. Nevertheless we chose this notation to keep the formulae in
	Section~\ref{sym_red} as legible as possible and because the practically used $d_n^\chi$ always depend on
	Gaussian test functions anyways.
}
Two straightforward ways to obtain scalar measures of convergence quality are given by either averaging with
respect to $(x_-, x_+)$ over the whole coordinate domain of the Poincar\'{e} section or to take the maximum
norm. We consistently tracked both variants for all our experiments and it turned out that in all cases
they differed at most by one order of magnitude.

As an example we observed for the resonance $\lambda_0\approx -0.9998 + 9.12\mathrm{i}$ errors
of $3.38\cdot 10^{-8}$ and $1.94\cdot 10^{-8}$ after $n = 4$ iterations with symmetry reduction but
$3.2\cdot 10^{-4}$ as well as $9.82\cdot 10^{-5}$ after $n_\mathrm{max} = 6$ iterations without reduction. This
discrepancy becomes slightly smaller near the first resonances as $\lambda_0 = \delta - 1$ universally shows
the best convergence behavior.

\begin{figure}[h]
	\centering
	\makebox[\textwidth][c]{
		\includegraphics[width=1.5\textwidth,trim={0mm 20mm 0mm 0mm}]{%
			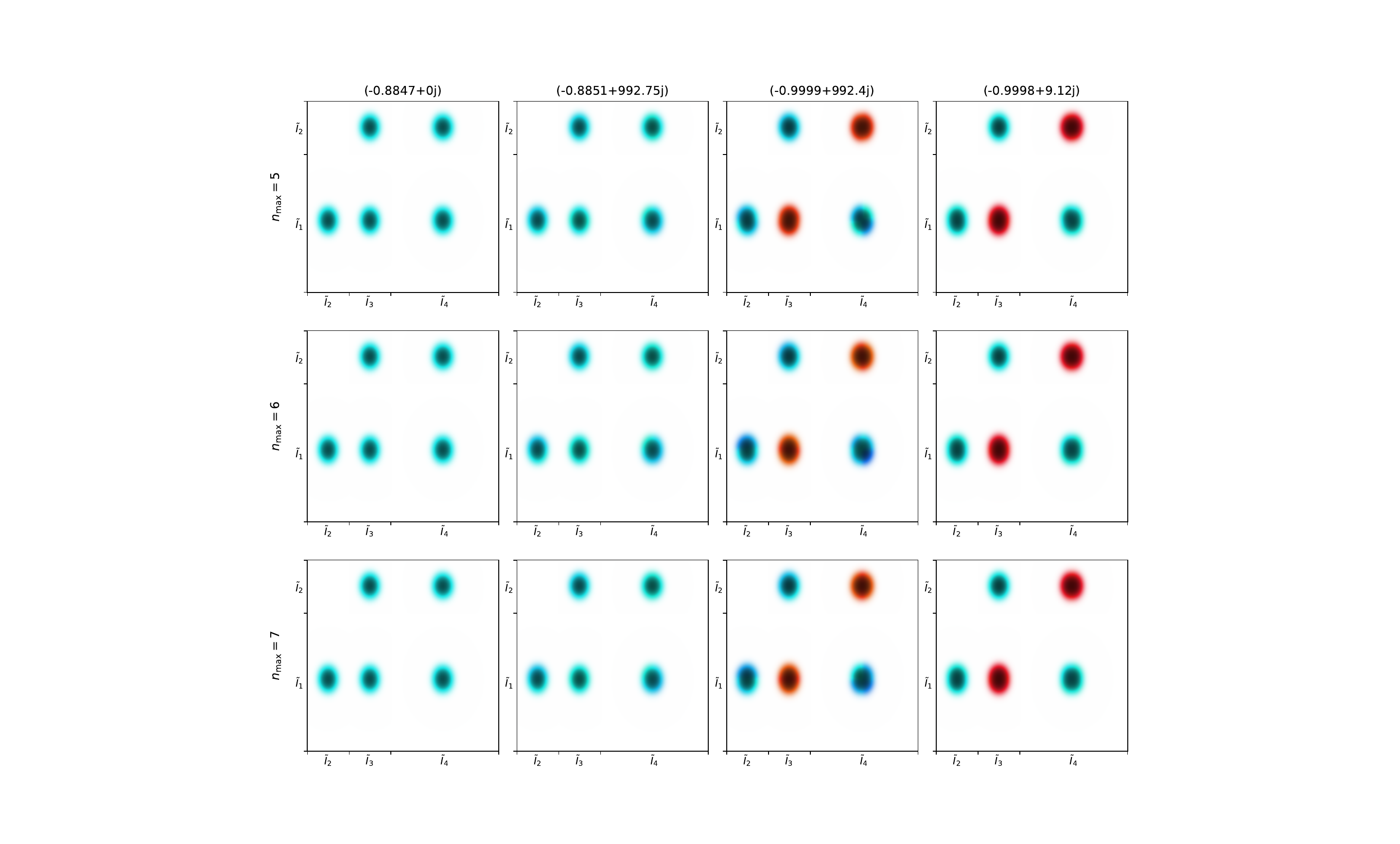}
	}
	\caption{%
		Analogous plots of $t^\Sigma_{\lambda_0, \sigma}$ for $\Sigma$ the canonical Poincar\'{e} section of
		the funneled torus $Y(10, 10, \pi/2)$ and $\sigma = 10^{-3}$ as in
		Figure~\ref{fig_torus_reduced_distributions} but now symmetry reduced by the trivial (one-element)
		group $\{e\}$. The invariant Ruelle distribution is itself invariant under the finite symmetry group
		of the surface so these plots should (and do) coincide with those of
		Figure~\ref{fig_torus_reduced_distributions} apart from the significantly higher cutoff $n_\mathrm{max} = 7$
		required for convergence of the cycle expansion (especially in the third column).
	}\label{fig_torus_reduced_distributions_trivial}
\end{figure}

\subsection*{Three-funneled Surface Experiments}

Next we conduct the analogous experiments for the three-funneled surface $X(12, 12, 12)$. The symmetry group of
the standard set of generators was identified as the Klein four-group in Section~\ref{sym_red.three_funnel}
but this is actually not
the full group of symmetries: In~\cite{Weich.2016} it was demonstrated how a \emph{flow-adapted} representation
of $X(\ell, \ell, \ell)$ yields a strictly larger symmetry group thereby unlocking the full power of symmetry
reduction for this class of surfaces. Without going into the details we state that this technique can be
adapted to the dynamical determinants considered here so we may calculate with straightforward adaptations of
the techniques described above
invariant Ruelle distributions for the flow-adapted three-funnel surfaces. For a comprehensive description of the
theoretical background refer to the first author's PhD thesis~\cite{Schuette.phd}.

\begin{remark}
	Geometrically the flow-adapted representation is defined by gluing two copies of hyperbolic space with three
	disjoint halfplanes removed, see Figure~\ref{fig_flow_adaptation}. This corresponds to a canonical Poincar\'{e}
	section which is far more symmetric compared to the Schottky representation of three-funneled surfaces.
	
	\begin{figure}[h]
		\centering
		\includegraphics[scale=0.55]{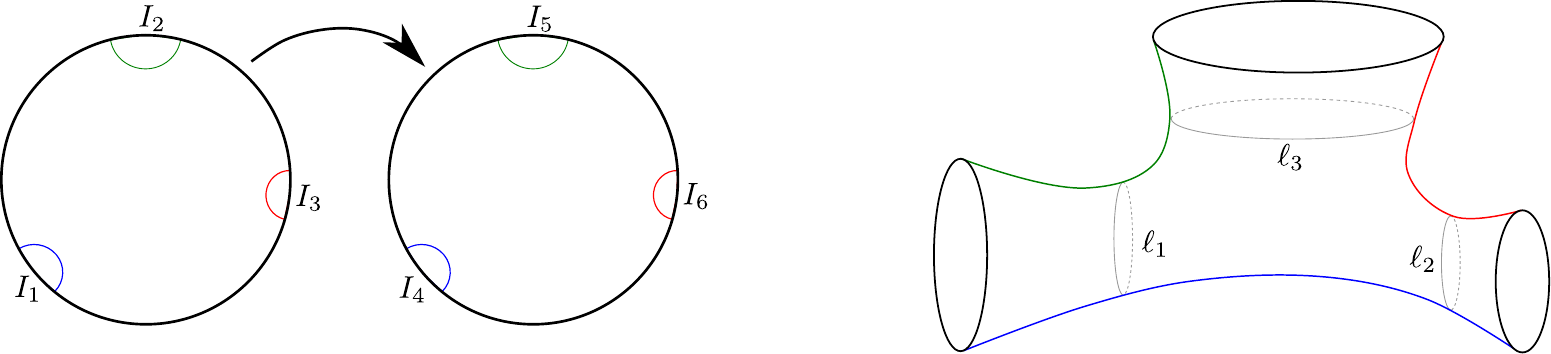}
		\caption{%
			Illustration of the gluing of two copies of the Poincar\'{e} disc $\mathbb{D}$ (left) along the
			colored circles resulting in a
			three-funnel surface (right). The Poincar\'{e} section resulting from this gluing treats the three
			seams along the funnels equally.
		}\label{fig_flow_adaptation}
	\end{figure}
\end{remark}

\begin{figure}[h]
	\centering
	\makebox[\textwidth][c]{
		\includegraphics[width=1.25\textwidth,trim={0mm 20mm 0mm 0mm}]{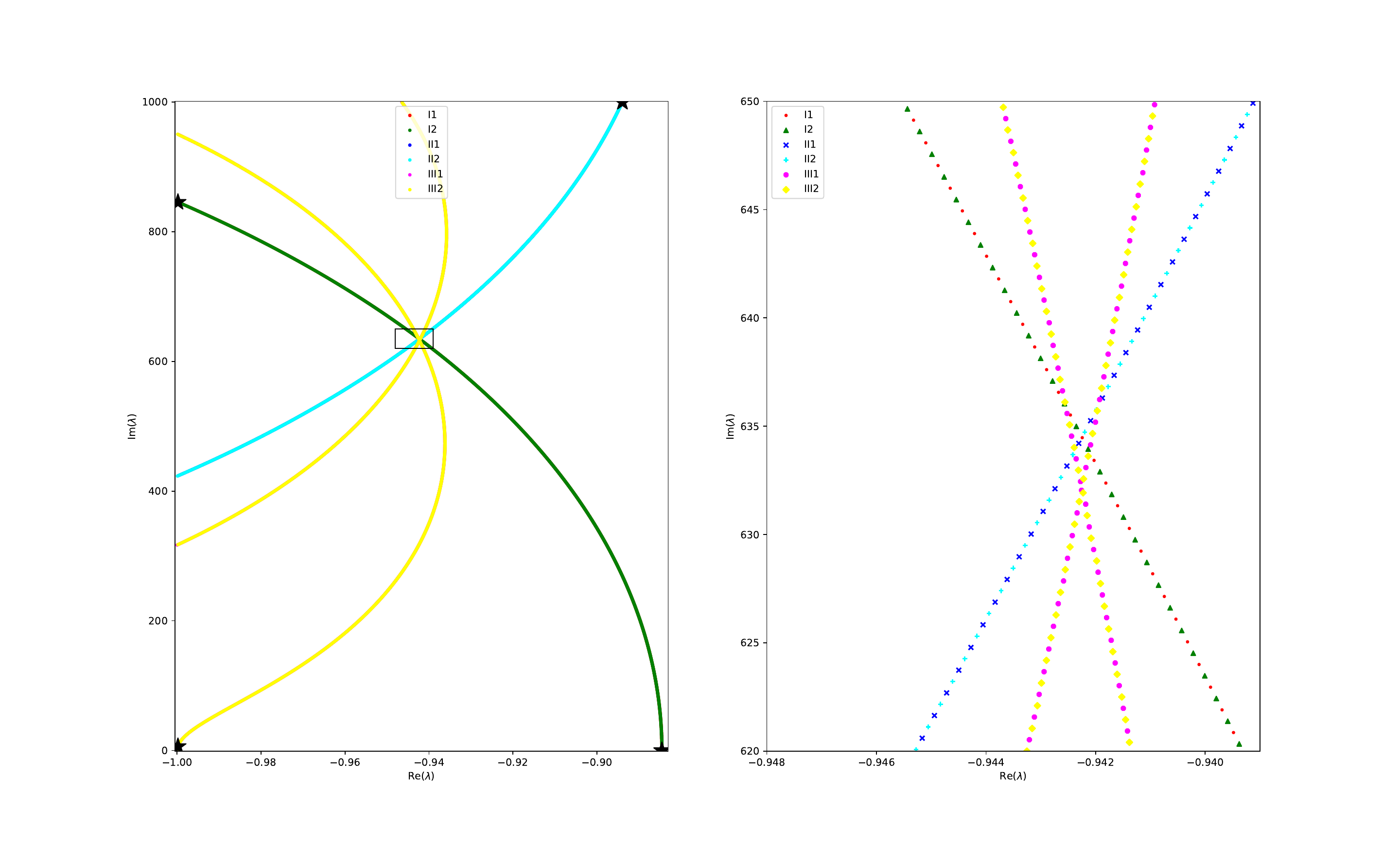}
	}
	\caption{%
		Resonances of the three-funnel surface $X(12, 12, 12)$ calculated in the full symmetry reduction of its
		flow-adapted representation. Due to the high degree of symmetry of the surface four very distinct resonance
		chains are visible. The right-hand side displays a zoom into the region where all four chains cross. It
		clearly shows how every chain belongs to a pair of representations which appear along the chain in
		alternating fashion.
		Again the four resonances considered below were marked with black stars.
	}\label{fig_funnel_resonances}
\end{figure}

As a first step we again
use the dynamical determinant to calculate the Pollicott-Ruelle resonances of this Schottky surface, see
Figure~\ref{fig_funnel_resonances}.
As expected from the quantum-classical correspondence this recovers the (shifted) quantum mechanical resonances
already calculated in~\cite[Figure~6]{Borthwick.2014} as well as the symmetry behavior of individual resonances as
determined in~\cite[Figure~8]{Weich.2016}. Note that the full symmetry group of the flow-adapted three-funneled
surfaces exhibits six irreducible unitary representations resulting in six different classes of resonances.
One also prominently observes the same resonant chains already studied in~\cite{Weich.2016}.

\begin{figure}[h]
	\centering
	\makebox[\textwidth][c]{
		\includegraphics[width=1.10\textwidth,trim={85mm 185mm 65mm 5mm}]{%
			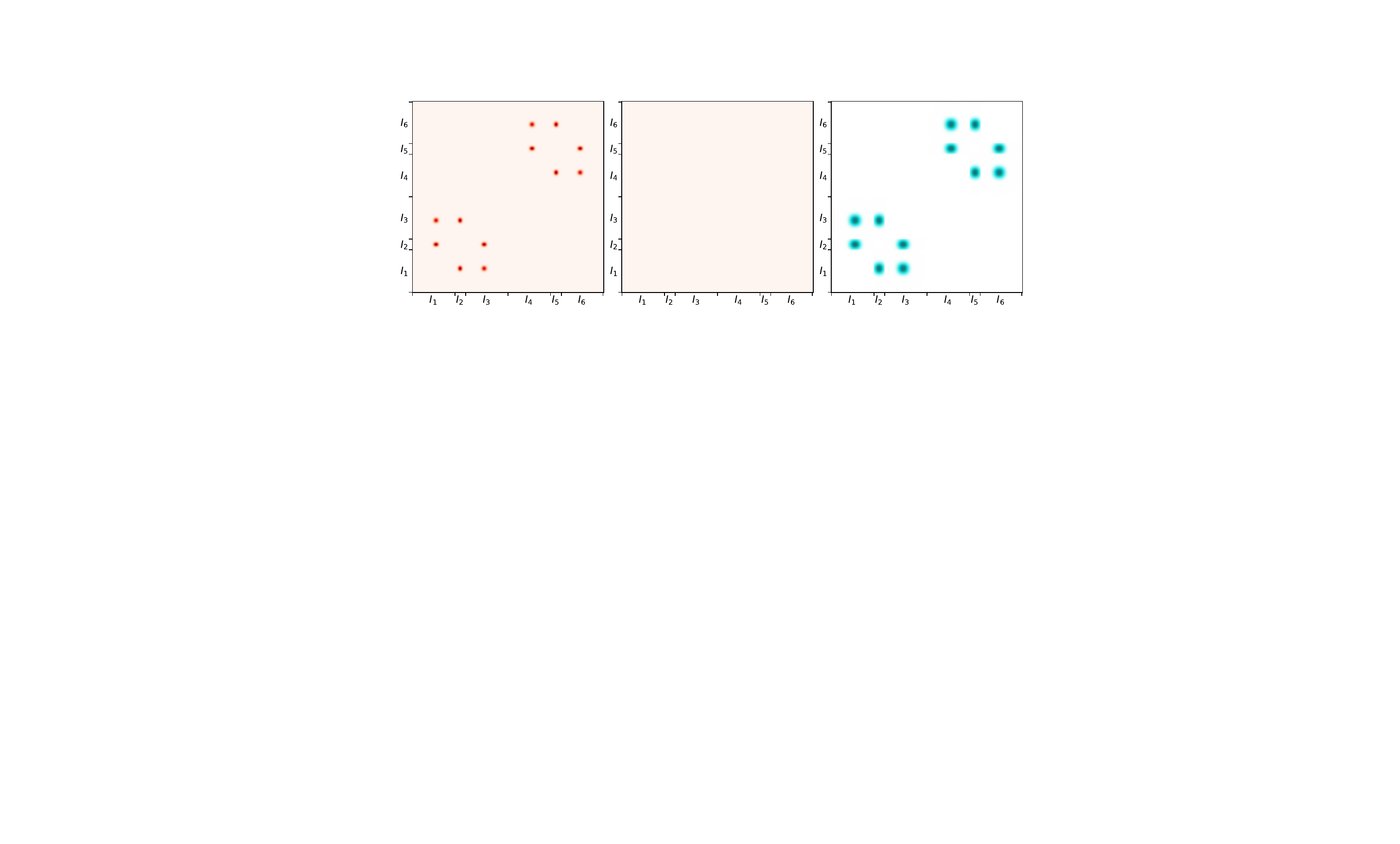}
	}
	\caption{%
		Invariant Ruelle distribution $t^\Sigma_{\lambda_0, \sigma}$ on the canonical Poincar\'{e} section
		$\Sigma$ of the three-funnel surface $X(12, 12, 12)$ associated with the first resonance
		$\lambda_0 \approx 0.8845$. This figure uses the width $\sigma = 10^{-2}$ and $n_\mathrm{max} = 5$ summands
		in the cycle expansion of the dynamical determinant $d_f$. Notice how the finite symmetry group of the
		function system and the surface itself still shows up in the distribution. The plots show (from left to
		right) real part, imaginary part, and argument/absolute value encoding of the distribution similar to
		Figure~\ref{fig_torus_first_distribution}.
	}\label{fig_funnel_first_distribution}
\end{figure}

With these first resonances available to us we can proceed very similarly to the funneled torus case. Again
we compute the smoothed invariant Ruelle distribution $t^\Sigma_{\lambda_0, \sigma}$ on the canonical
Poincar\'{e} section of the flow-adapted system associated with the first resonance
$\lambda_0 = \delta - 1 \approx -0.8845$. The resulting plot is shown
in Figure~\ref{fig_funnel_first_distribution}. Again the plot possesses the theoretically known properties of
being real-valued and positive, as well as showing a clear symmetry with respect to the finite symmetry group
of the underlying function system.

\begin{figure}[h]
	\centering
	\makebox[\textwidth][c]{
		\includegraphics[width=1.5\textwidth,trim={0mm 20mm 0mm 0mm}]{%
			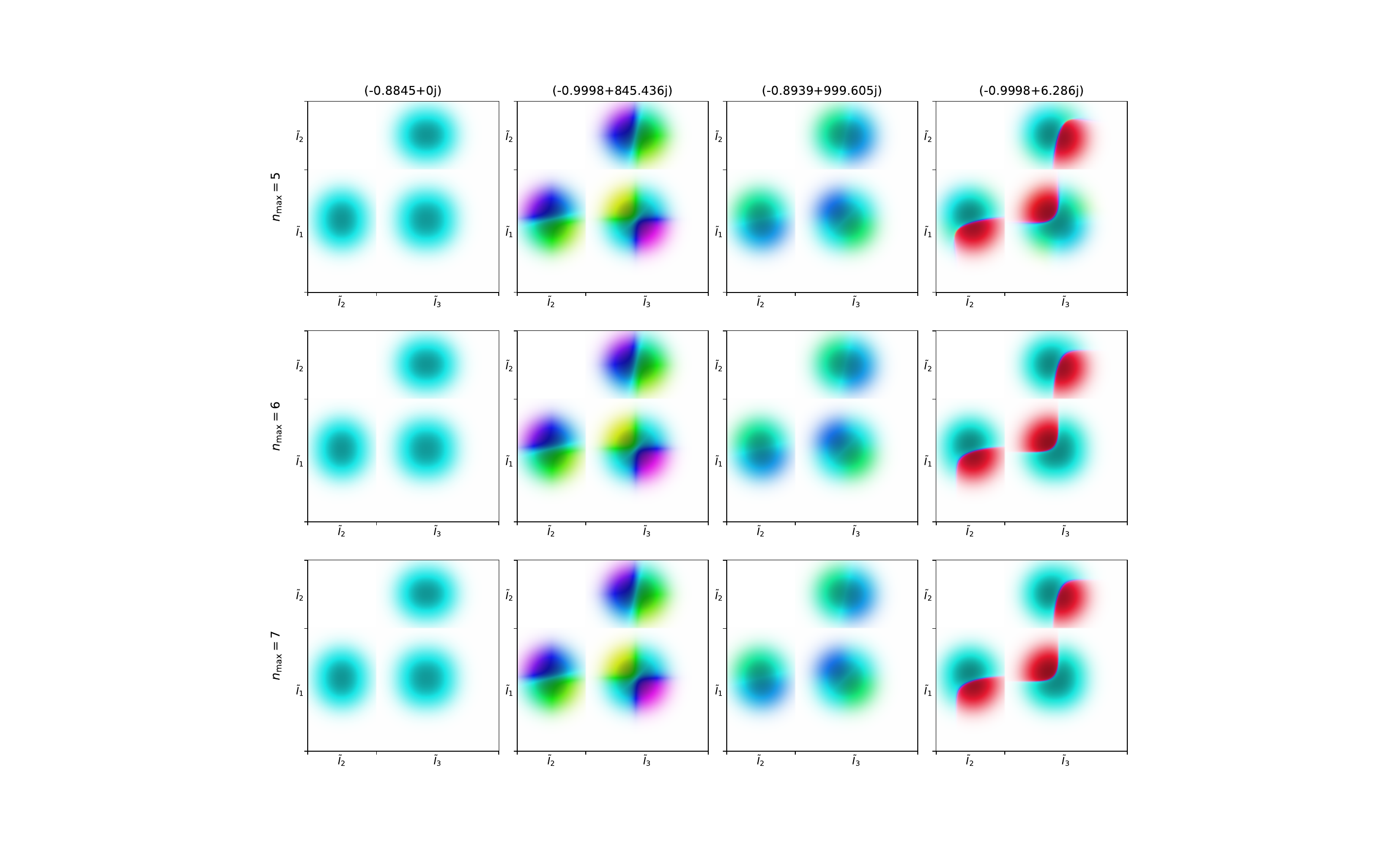}
	}
	\caption{%
		Fully symmetry reduced invariant Ruelle distributions $t^\Sigma_{\lambda_0, \sigma}$ on the canonical
		Poincar\'{e} section $\Sigma$ of the flow-adapted three-funnel surface $X(12, 12, 12)$ evaluated at four
		different choices of resonance $\lambda_0$ with $\sigma = 10^{-2}$ and plotted as phase-lightness.
		The three different rows
		show the numerical results of using (from top to bottom) $n_\mathrm{max}\in \{5, 6, 7\}$ summands in
		the cycle expansion. Following along any of the four columns shows that the presented plots have
		converged rather well already at $n_\mathrm{max} = 5$. Due to symmetries in the distributions as shown
		in Figure~\ref{fig_funnel_first_distribution} it again suffices
		to consider as the coordinate domain refined subsets
		$(\widetilde{I}_2\cup \widetilde{I}_3) \times (\widetilde{I}_1\cup \widetilde{I}_2)$ which are
		shown in Figure~\ref{fig_domain_refinement}.
	}\label{fig_funnel_reduced_distributions}
\end{figure}

As a next step we support the observations made above for the funneled torus with analogous experiments for $X(12, 12, 12)$:
Figure~\ref{fig_funnel_reduced_distributions} contains plots of invariant Ruelle distributions for a set of four different
resonances located roughly on the corners of the resonance plot calculated in full symmetry reduction and the comparison with
Figure~\ref{fig_funnel_reduced_distributions_trivial} which uses the trivial symmetry group $\{e\}$ shows again the great
benefit of symmetry reduction when it comes to the practical rate of convergence, i.e.~the required number of
summands in the cycle expansion. Comparing the funneled torus with the three-funneled surfaces also reveals that
the former requires the determination of more geodesic lengths and orbit integrals to achieve the same relative
errors as the latter even though this difference is not quite as significant as the difference between reduced and
non-reduced calculations.

\begin{figure}[h]
	\centering
	\makebox[\textwidth][c]{
		\includegraphics[width=1.5\textwidth,trim={0mm 20mm 0mm 0mm}]{%
			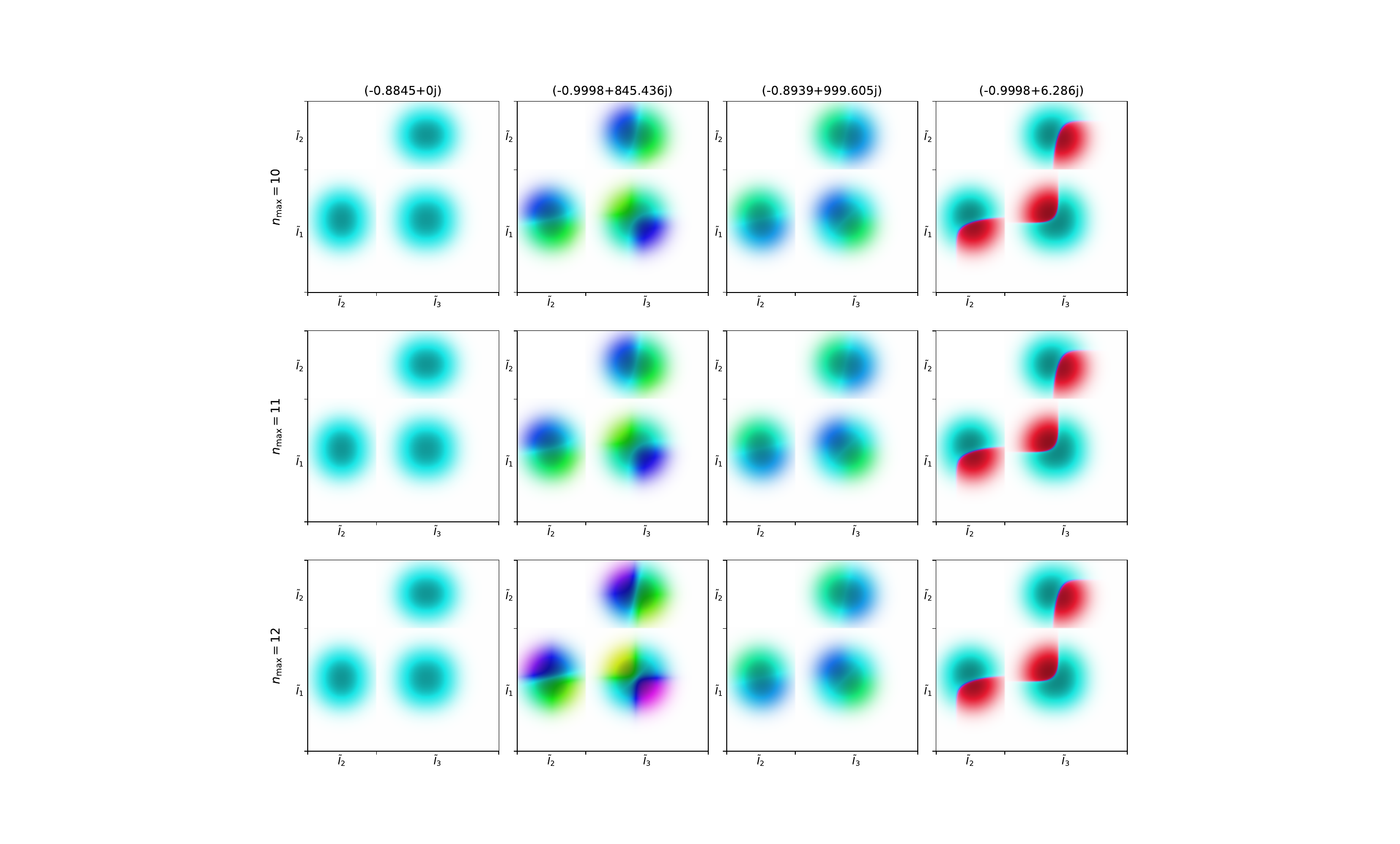}
	}
	\caption{%
		Analogous plots of $t^\Sigma_{\lambda_0, \sigma}$ for $\Sigma$ the canonical Poincar\'{e} section of
		the flow-adapted representation of $X(12, 12, 12)$ and $\sigma = 10^{-2}$ as in
		Figure~\ref{fig_funnel_reduced_distributions} but here the trivial symmetry group $\{e\}$ was used.
		Again the distributions are in close qualitative agreement with those in
		Figure~\ref{fig_funnel_reduced_distributions} but convergence required significantly more terms in
		the cycle expansion as illustrated by the obvious lack of convergence in the second column.
	}\label{fig_funnel_reduced_distributions_trivial}
\end{figure}

We note that the first and third columns of Figure~\ref{fig_funnel_reduced_distributions} which correspond to the first resonance
and the next resonance which is closest to the global spectral gap show far less similarity than observed for the funneled torus
in the first and second columns of Figure~\ref{fig_torus_reduced_distributions}. But this is simply due to the fact that we can
follow the resonance chain corresponding to the representations II1/II2 further to the right and the resonance on this chain
which is closest to the global gap actually turns out to be located quite a bit higher at
$\lambda_0 \approx -0.8845 + 1269.2\mathrm{i}$.\footnote{
	Resonances were calculated with an accuracy of $10^{-4}$ in the real part which is apparently not sufficient to resolve the
	fact that the real part of the chain maximum should be \emph{strictly} smaller than the real part of the first resonance.
}
In Figure~\ref{fig_funnel_resonances_extended} both this resonance chain as well as the invariant Ruelle distribution corresponding
to the chain maximum were plotted. The distribution clearly supports the hypothesis developed for the funneled torus namely that
Ruelle distributions associated with resonances near the global spectral gap should show high qualitative agreement with the one
attached to the first resonance.

The experiment in Figure~\ref{fig_funnel_resonances_extended} also supports another trend with respect to convergence rates: The
practical rates are much more sensitive to the real than the imaginary parts of the associated resonance. Even at the rather high
imaginary part of $\approx 1269.2$ the distributions convergence rapidly in full symmetry reduction with relative errors of magnitude
$\approx 10^{-6}$ after only $n_\mathrm{max} = 4$ iterations. This behavior is rather promising for future experiments involving
resonance chains near the global spectral gap!

\begin{figure}[!htb]
	\centering
	\begin{subfigure}[b]{0.98\textwidth}
		\centering
		\includegraphics[trim={30mm 15mm 30mm 5mm},width=\textwidth]{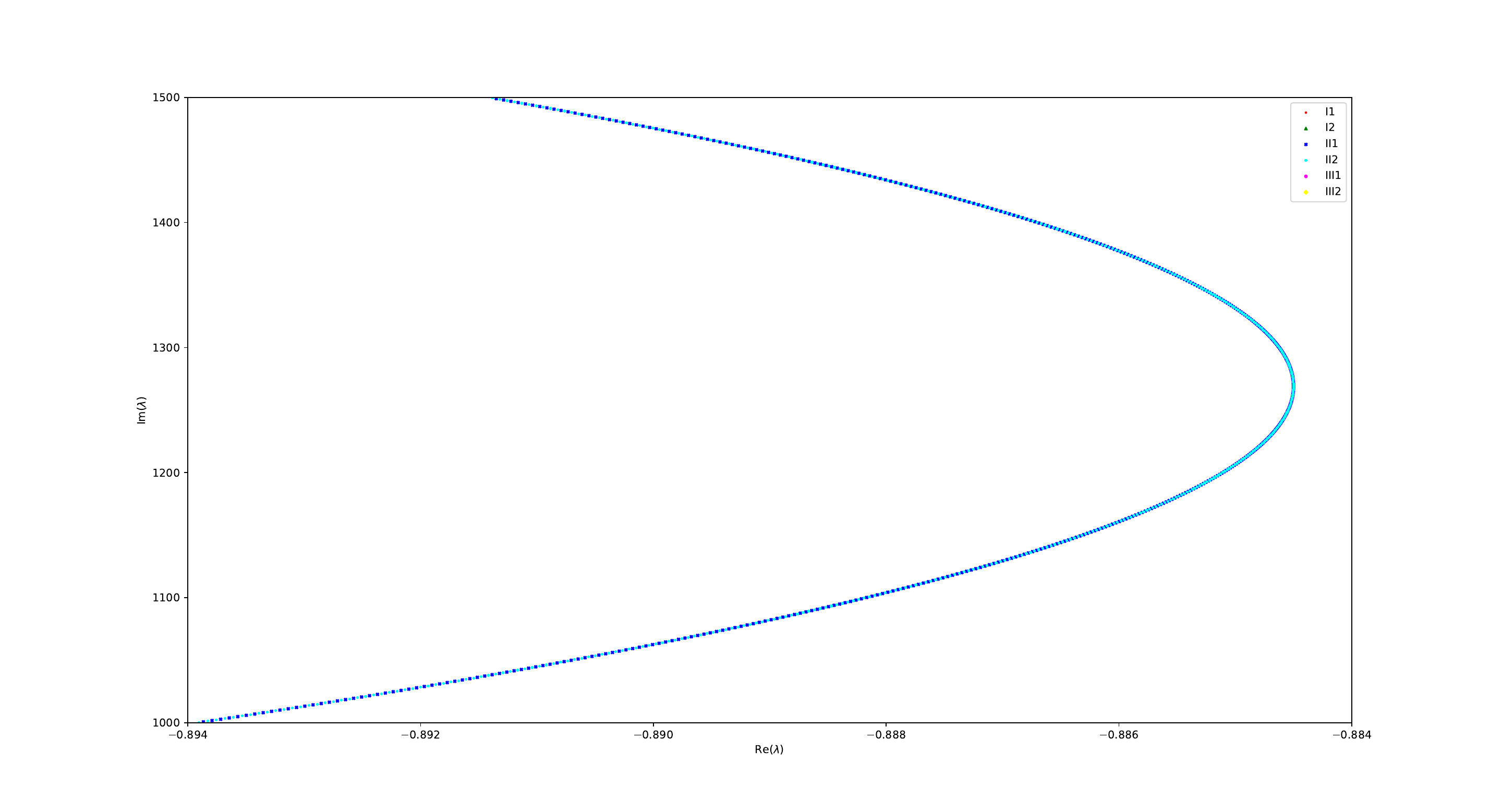}
	\end{subfigure}
	\hfill
	\begin{subfigure}[b]{0.98\textwidth}
		\centering
		\includegraphics[trim={10mm 20mm 10mm 5mm},width=\textwidth]{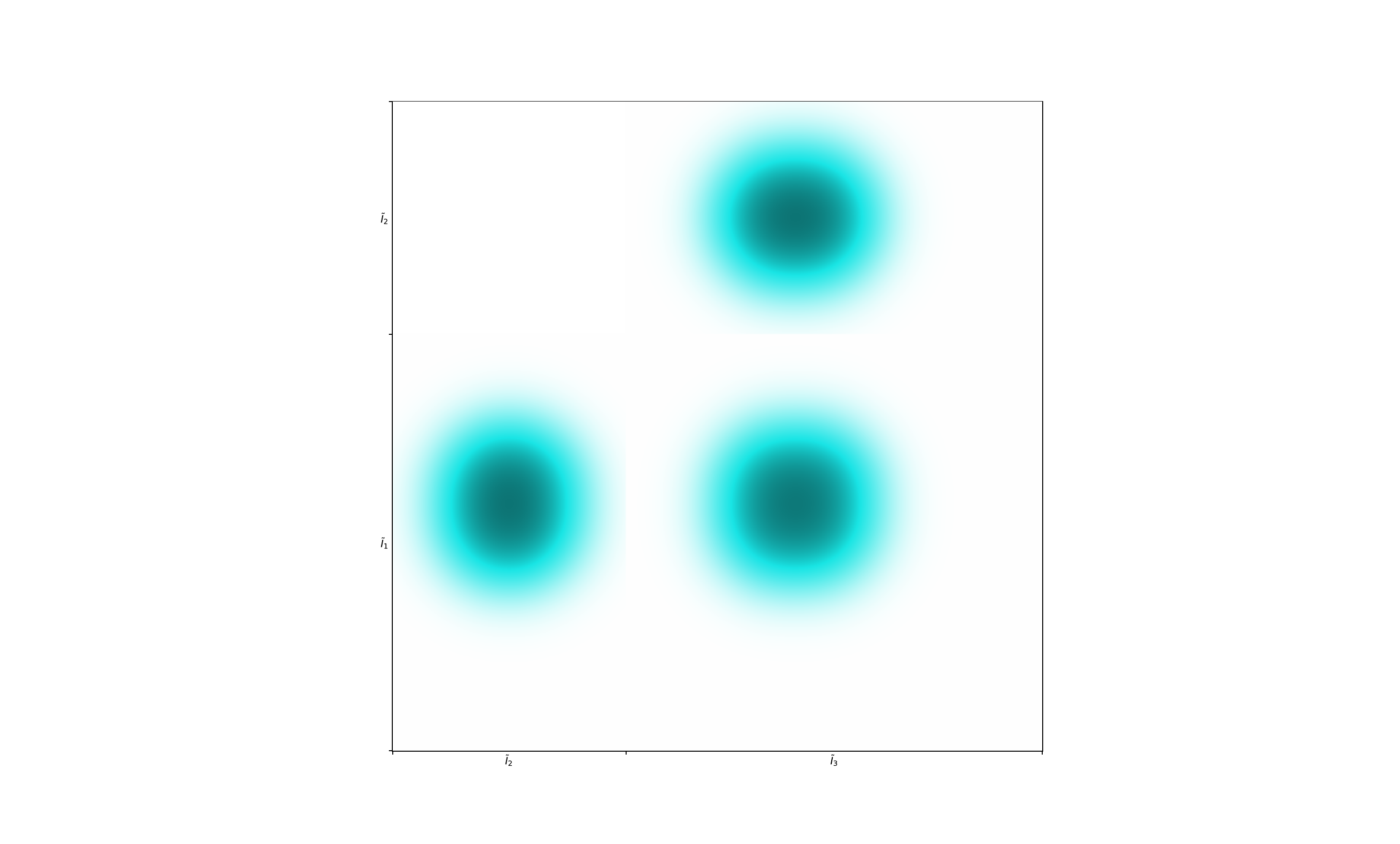}
	\end{subfigure}
	\caption{%
		Additional resonances along the chain of II1/II2 resonances already starting in Figure~\ref{fig_funnel_resonances}
		(top plot) together with the invariant Ruelle distribution at the chain resonance
		$\lambda_0 \approx -0.8845 + 1269.20\mathrm{i}$
		closest to the global spectral gap $\mathrm{Re}(\lambda) = \delta - 1 \approx -0.8845$ (bottom plot). The comparison with the
		first column in Figure~\ref{fig_funnel_reduced_distributions} which uses the same width $\sigma = 10^{-2}$
		shows near perfect qualitative agreement supporting the analogous observation made for the funneled torus above.
	}\label{fig_funnel_resonances_extended}
\end{figure}

The preceding examples should clearly justify the statements made in Section~\ref{sym_red} -- symmetry
reduction does indeed improve the convergence properties of invariant Ruelle distributions drastically.
Furthermore the example of three-funneled surfaces shows that one should use the largest available symmetry
group for a given Schottky surface whenever possible to take maximal advantage of the symmetries of the surface.
Finally refinement of the coordinate domain parameterizing the Poincar\'{e} section is vital if we want
to be able to distinguish
the relevant features of the calculated distributions with sufficient resolution. For systematic experiments in
a regime where the fractal limit set is resolved beyond the first level our ad-hoc procedure can easily be
augmented: After an application of group elements corresponding to a certain word length the initial fundamental
intervals have multiplied according to the given word length but the resulting intervals are also contracted.
One can now choose a subset of these contracted intervals which are representative for the distribution with
respect to the symmetry group of the surface. This effectively increases the resolution of the numerical
approximations without requiring a larger grid of support points on the coordinate domains.

\subsection*{Additional Three-funneled Surface Plots}

Even though already restricted to two-dimensional quantities our approximations $t^\Sigma_{\lambda_0, \sigma}$ of
invariant Ruelle distributions are still complex-valued functions on a two-dimensional domain effectively making
them four-dimensional quantities. Any visualization will
necessarily have to emphasize certain aspects of these distributions over others.
The particular kind of phase-lightness plots used for illustration above showed nicely the qualitative behavior in terms
of phase but the variations in absolute value are not as distinct. The following plots in Figure~\ref{fig_funnel_reduced_distributions_abs}
therefore emphasize this latter aspect by
showing the analogue of Figure~\ref{fig_funnel_reduced_distributions} but as a plot of the absolute value alone.

\begin{figure}[h]
	\centering
	\makebox[\textwidth][c]{
		\includegraphics[width=1.5\textwidth,trim={0mm 20mm 0mm 0mm}]{%
			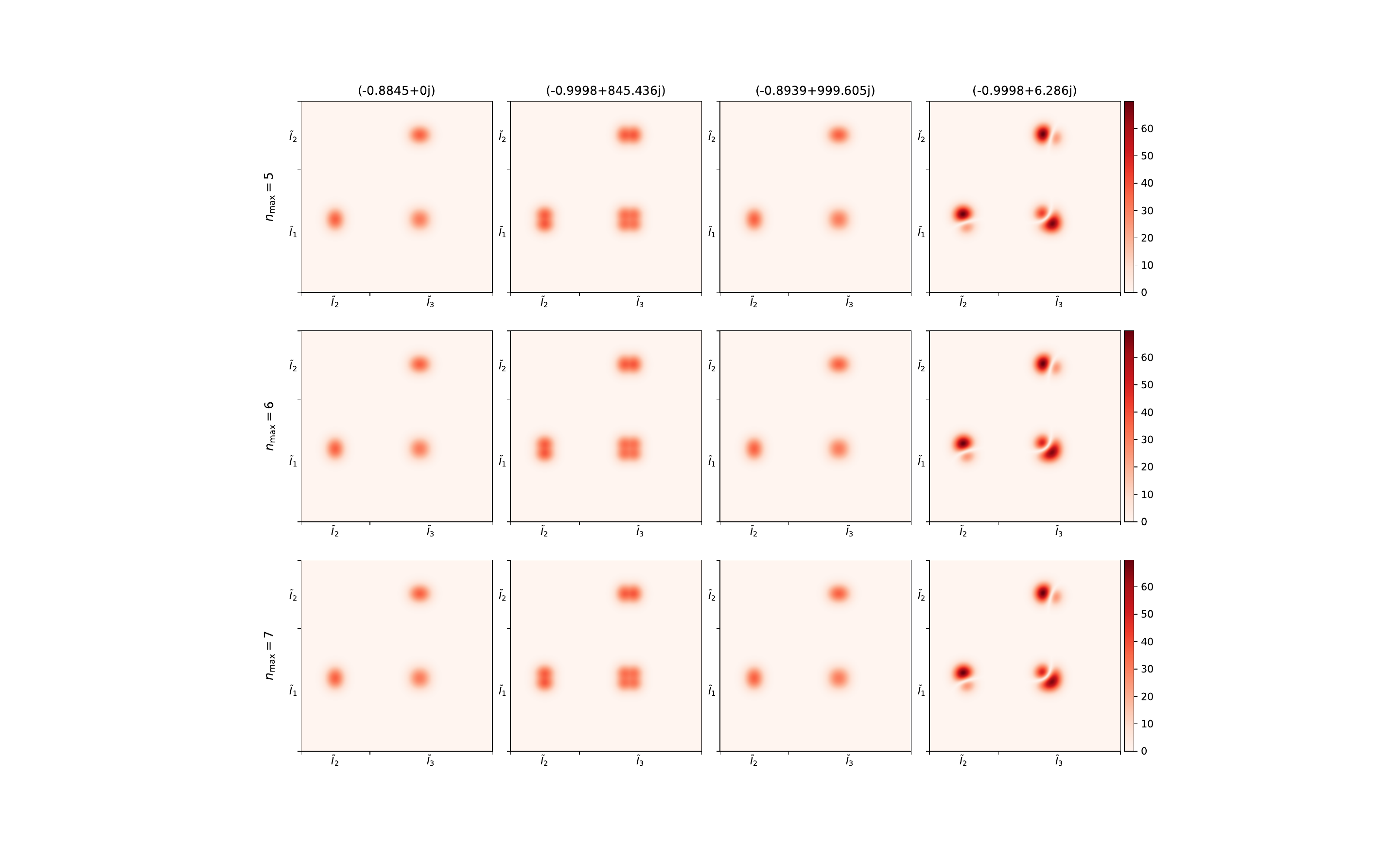}
	}
	\caption{%
		Absolute values of fully symmetry reduced invariant Ruelle distributions $t^\Sigma_{\lambda_0, \sigma}$ on the canonical
		Poincar\'{e} section $\Sigma$ of the flow-adapted three-funnel surface $X(12, 12, 12)$ evaluated at four
		different choices of resonance $\lambda_0$ with $\sigma = 10^{-2}$. Again the different rows
		show the numerical results of using (from top to bottom) $n_\mathrm{max}\in \{5, 6, 7\}$ summands in
		the cycle expansion. Following along any of the four columns shows that the presented plots have
		converged rather well already at $n_\mathrm{max} = 5$. To increase resolution the coordinates
		$(x_-, x_+)$ were again restricted to the refined rectangles
		$(\widetilde{I}_2\cup \widetilde{I}_3) \times (\widetilde{I}_1\cup \widetilde{I}_2)$ of
		Figure~\ref{fig_domain_refinement}.
	}\label{fig_funnel_reduced_distributions_abs}
\end{figure}

As second important facet not yet discussed is the dependence of the qualitative features as well as the convergence behavior of
$t^\Sigma_{\lambda_0, \sigma}$ on the Gaussian width $\sigma$. We therefore plotted the distributions using $\sigma = 10^{-2}$ in
Figure~\ref{fig_funnel_reduced_distributions_abs} again for different values of $n_\mathrm{max}$ to compare them with the
results for $\sigma = 7\cdot 10^{-3}$ in the subsequent Figure~\ref{fig_funnel_reduced_distributions_abs_small}.

\begin{figure}[h]
	\centering
	\makebox[\textwidth][c]{
		\includegraphics[width=1.5\textwidth,trim={0mm 20mm 0mm 0mm}]{%
			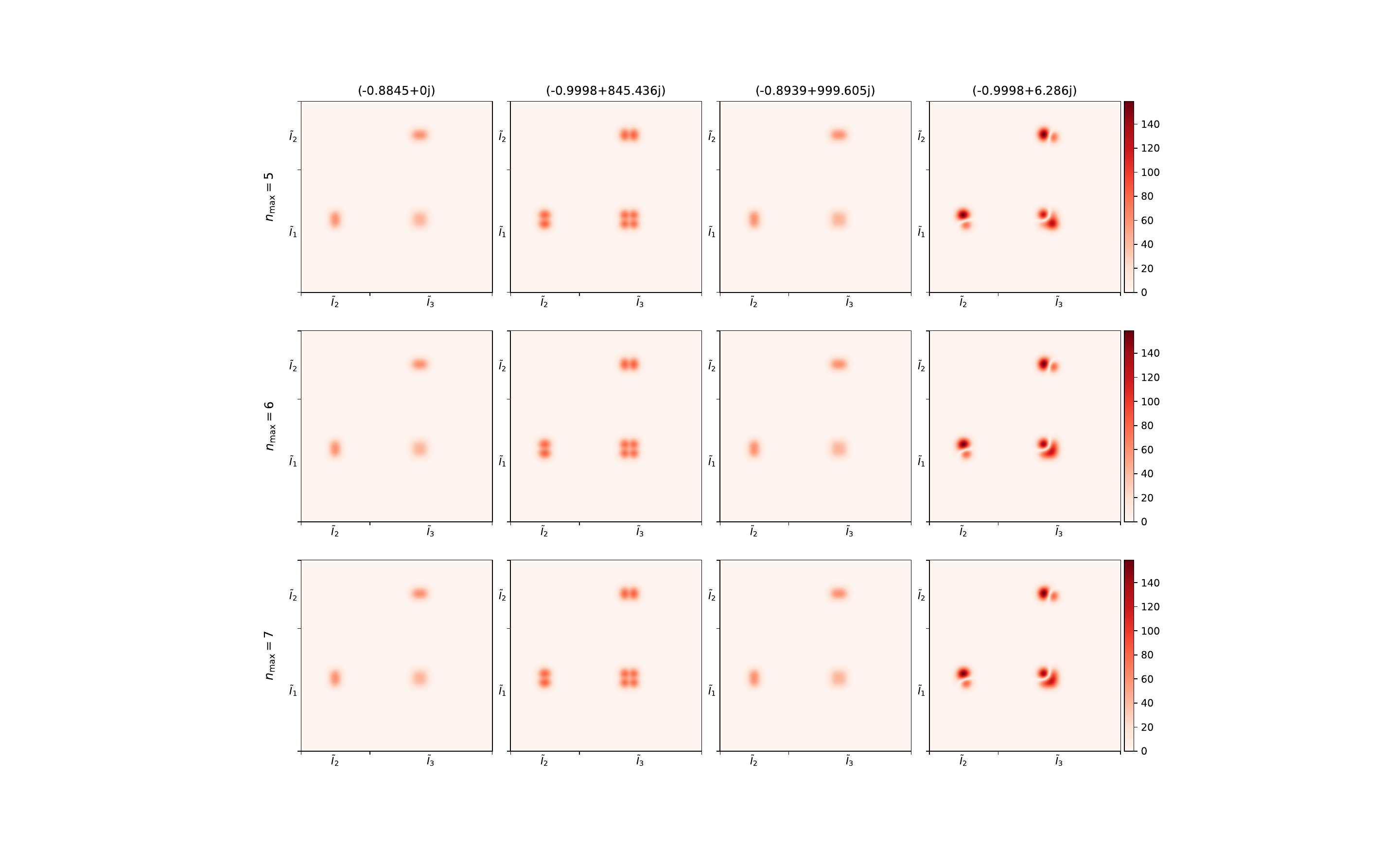}
	}
	\caption{%
		Similar to Figure~\ref{fig_funnel_reduced_distributions_abs} but now with $\sigma = 7\cdot 10^{-3}$.
		Due to the smaller value of $\sigma$ the resolution of the trapped set has become significantly better
		especially in columns two and four but at the expense of the rate of convergence: Now the right-most column
		changes noticeably between $n_\mathrm{max} = 5$ and $n_\mathrm{max} = 6$.
	}\label{fig_funnel_reduced_distributions_abs_small}
\end{figure}

The comparison shows how decreasing the width $\sigma$ brings out additional features in the distributions in particular the
convergence of their supports towards the fractal limit set of the underlying surface. This comes at the expense of decreased
convergence speed, though, as illustrated by the fourth column in Figure~\ref{fig_funnel_reduced_distributions_abs_small}.
The benefits of symmetry reduction should therefore be even more pronounced when combined with experiments of the regime
$\sigma\rightarrow 0$ for distributions associated with resonances far from the global spectral gap.

\begin{figure}[!htb]
	\centering
	\makebox[\textwidth][c]{
		\includegraphics[width=1.5\textwidth,trim={0mm 60mm 0mm 0mm}]{%
			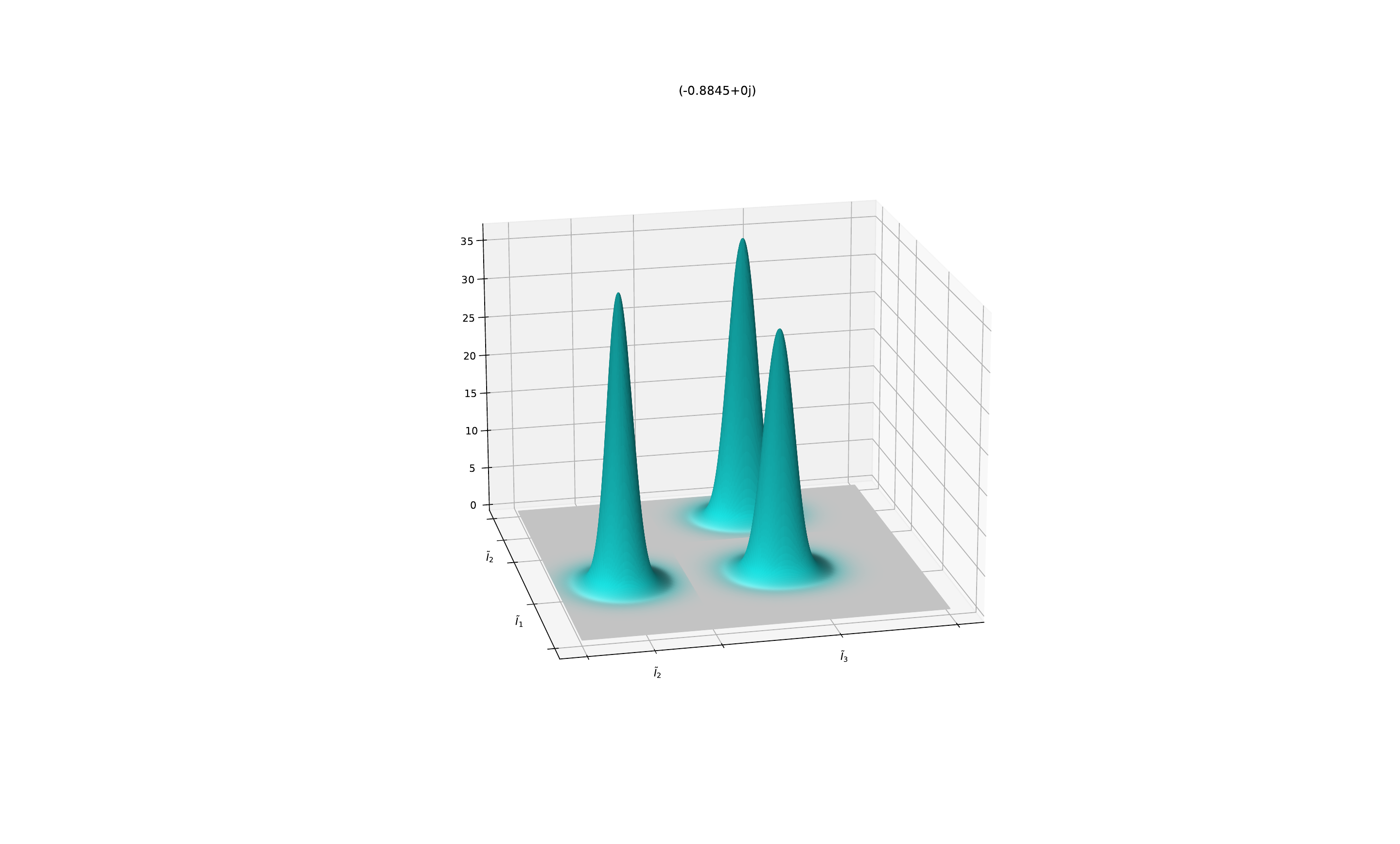}
	}
	\caption{%
		Three-dimensional representation of the invariant Ruelle distribution for the three-funneled surface
		$X(12, 12, 12)$ on the canonical Poincar\'{e} section of the flow-adapted system and with Gaussian width
		$\sigma = 10^{-2}$ associated to the first resonance $\lambda_0 \approx -0.8845$.
		The absolute value of the distribution determines the height of the surface whereas
		the colors indicate the position of the complex argument on the unit circle. This effectively combines the presentation
		in Figures~\ref{fig_funnel_reduced_distributions} and~\ref{fig_funnel_reduced_distributions_abs} emphasizing nicely
		the homogeneity of the first invariant Ruelle distribution.
	}\label{fig_funnel_reduced_distributions_3d_0}
\end{figure}

We finish this first numerical tour of invariant Ruelle distributions by considering a third technique to
illustrate these
that combines the phase-focused and absolute-valued focused approaches taken so far: By increasing the dimension of the plots
themselves we can map the absolute value to the height of a surface in three-dimensional space and simultaneously color this
surface according to the phase (argument) of the complex values. The resulting plot for the first resonance of $X(12, 12, 12)$
is shown in Figure~\ref{fig_funnel_reduced_distributions_3d_0}. While visually more complex due to its
three-dimensional nature it does illustrate the
distribution in quite an intuitive manner and the high homogeneity present in the first invariant Ruelle
distribution is immediately apparent.

\begin{figure}[!htb]
	\centering
	\makebox[\textwidth][c]{
		\includegraphics[width=1.5\textwidth,trim={0mm 60mm 0mm 0mm}]{%
			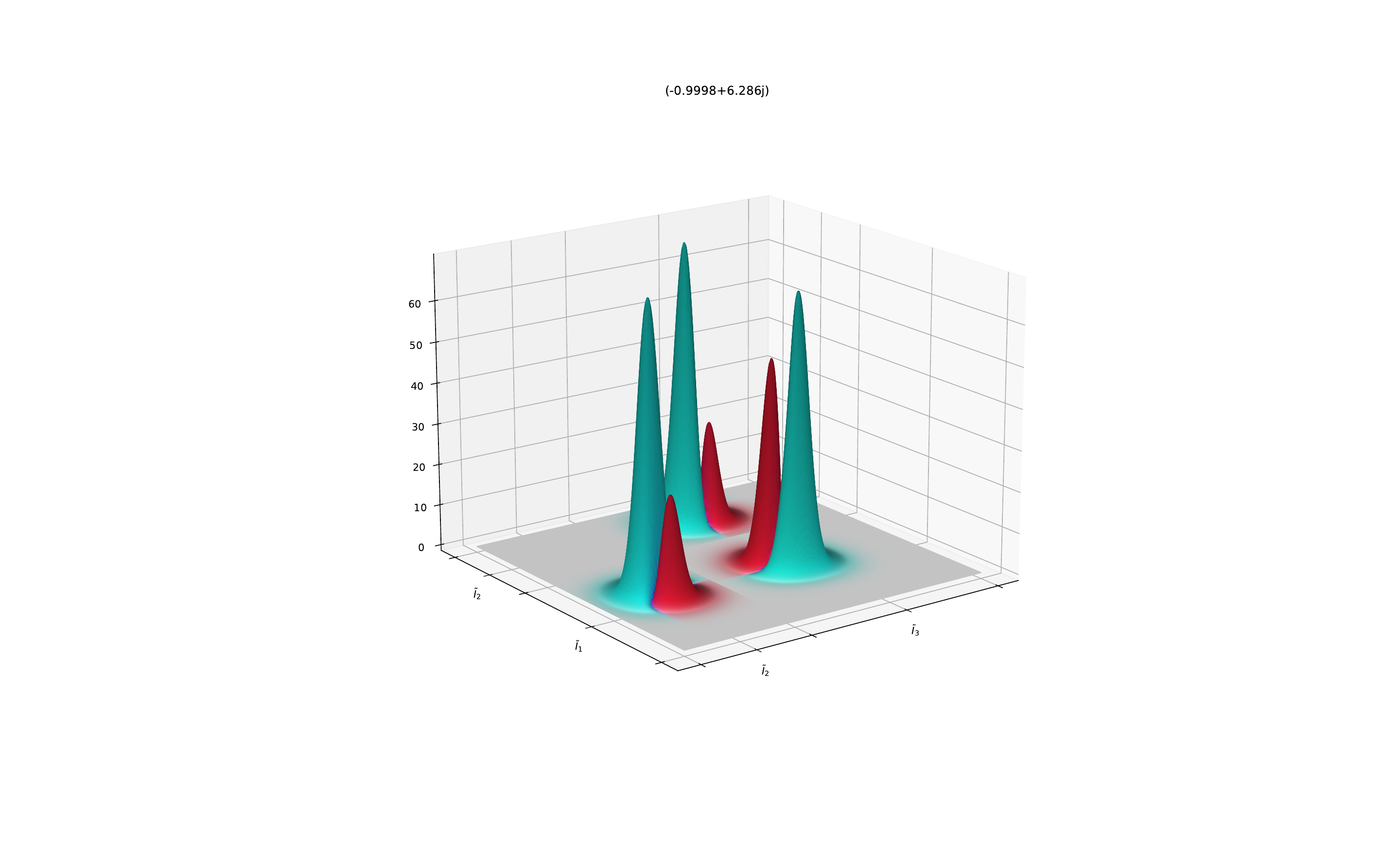}
	}
	\caption{%
		Analogous plot as in Figure~\ref{fig_funnel_reduced_distributions_3d_0} but for the resonance
		$\lambda_0 \approx -0.9998 + 6.286\mathrm{i}$ and again the same Gaussian width $\sigma = 10^{-2}$.
		The three-dimensional plot brings out the splitting of each peak into two sub-peaks of distinctly different
		phases better than the corresponding absolute value plot in the fourth column of
		Figure~\ref{fig_funnel_reduced_distributions_abs}.
	}\label{fig_funnel_reduced_distributions_3d_1}
\end{figure}

The plots contained in Figures~\ref{fig_funnel_reduced_distributions_3d_1} and~\ref{fig_funnel_reduced_distributions_3d_2}
show the three-dimensional representation of a different resonance already considered in
Figures~\ref{fig_funnel_reduced_distributions_abs} and~\ref{fig_funnel_reduced_distributions_abs_small} for the widths
$\sigma = 10^{-2}$ and $\sigma = 7\cdot 10^{-3}$. The splitting of larger peaks into sub-peaks supported on a neighborhood of the limit
set is brought out very clearly as well as the very distinct phases corresponding to different sub-peaks. Decreasing $\sigma$
emphasizes these features further but is less necessary than for the absolute value plots considered previously.

\begin{figure}[h]
	\centering
	\makebox[\textwidth][c]{
		\includegraphics[width=1.5\textwidth,trim={0mm 40mm 0mm 0mm}]{%
			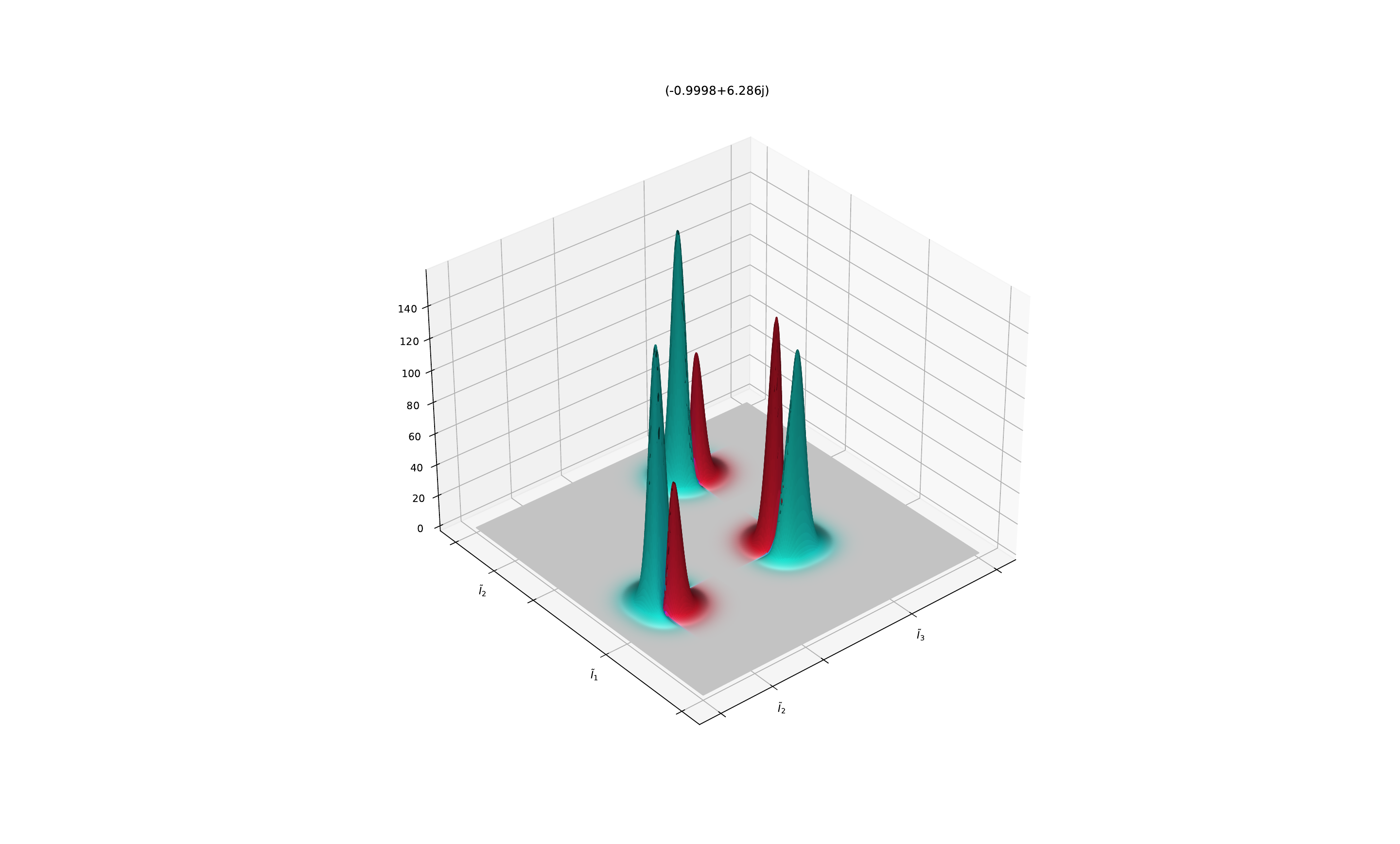}
	}
	\caption{%
		Analogous plot as in Figure~\ref{fig_funnel_reduced_distributions_3d_1} for the same resonance
		$\lambda_0 \approx -0.9998 + 6.286\mathrm{i}$ but a refined Gaussian width of $\sigma = 7\cdot 10^{-3}$. Even this
		slight decrease in $\sigma$ already increases the resolution of distinct sub-peaks with rather prominent differences in
		phase by quite a margin.
	}\label{fig_funnel_reduced_distributions_3d_2}
\end{figure}

\begin{figure}[h]
	\centering
	\makebox[\textwidth][c]{
		\includegraphics[width=1.5\textwidth,trim={0mm 40mm 0mm 0mm}]{%
			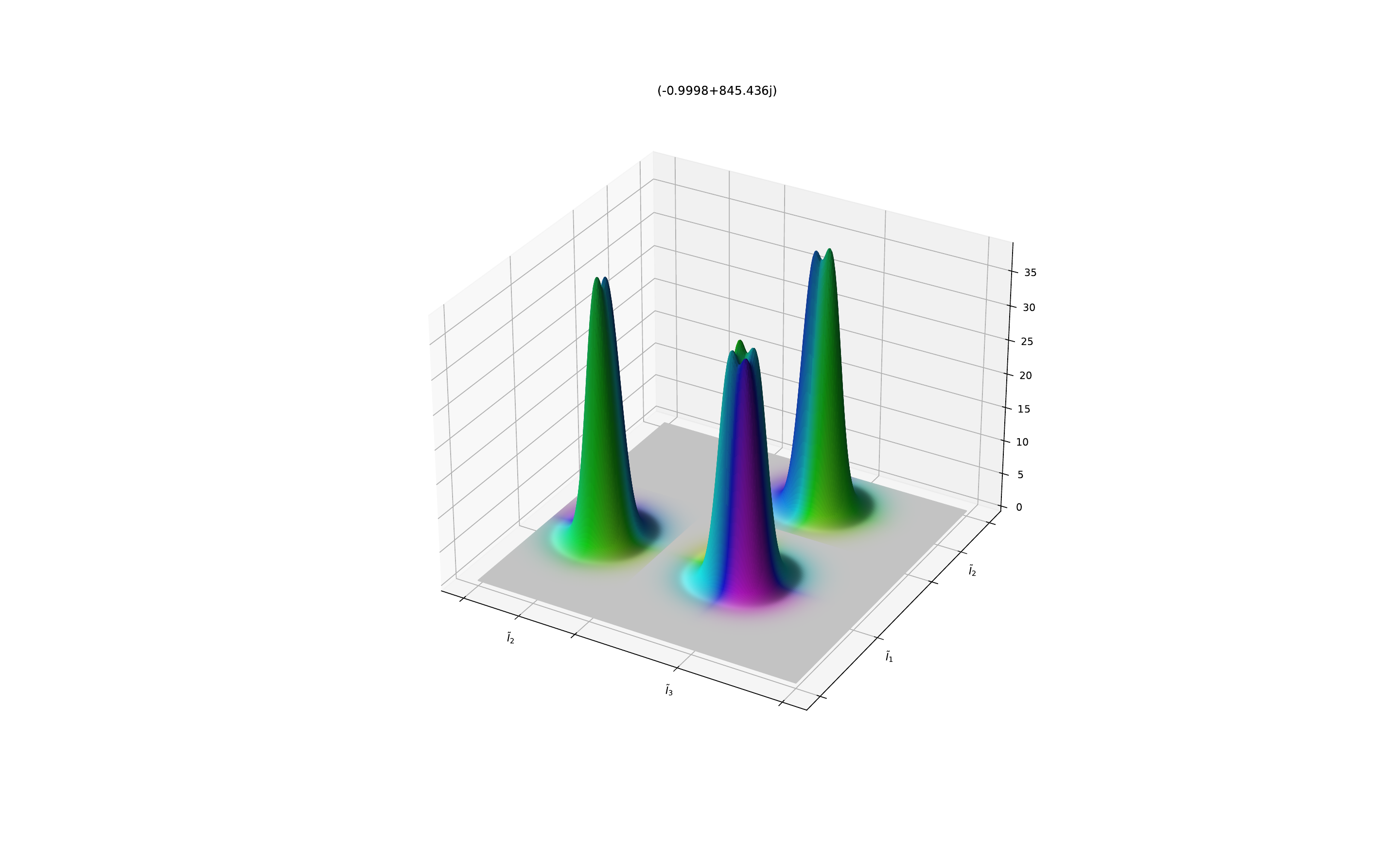}
	}
	\caption{%
		Analogous plot as in Figure~\ref{fig_funnel_reduced_distributions_3d_0} but for the resonance
		$\lambda_0 \approx -0.9998 + 845.436\mathrm{i}$ and again width $\sigma = 10^{-2}$. The splitting into pairs and
		quadruples of sub-peaks is significantly easier to distinguish than in the corresponding absolute value plot shown
		in the second column of Figure~\ref{fig_funnel_reduced_distributions_abs}.
	}\label{fig_funnel_reduced_distributions_3d_3}
\end{figure}

Finally the same experiment was repeated in Figures~\ref{fig_funnel_reduced_distributions_3d_3}
and~\ref{fig_funnel_reduced_distributions_3d_4} with a second resonance of $X(12, 12, 12)$ where one of the major
peaks even splits into four sub-peaks already at $\sigma = 10^{-2}$ and more clearly at $\sigma = 7\cdot 10^{-3}$.

\begin{figure}[h]
	\centering
	\makebox[\textwidth][c]{
		\includegraphics[width=1.5\textwidth,trim={0mm 40mm 0mm 0mm}]{%
			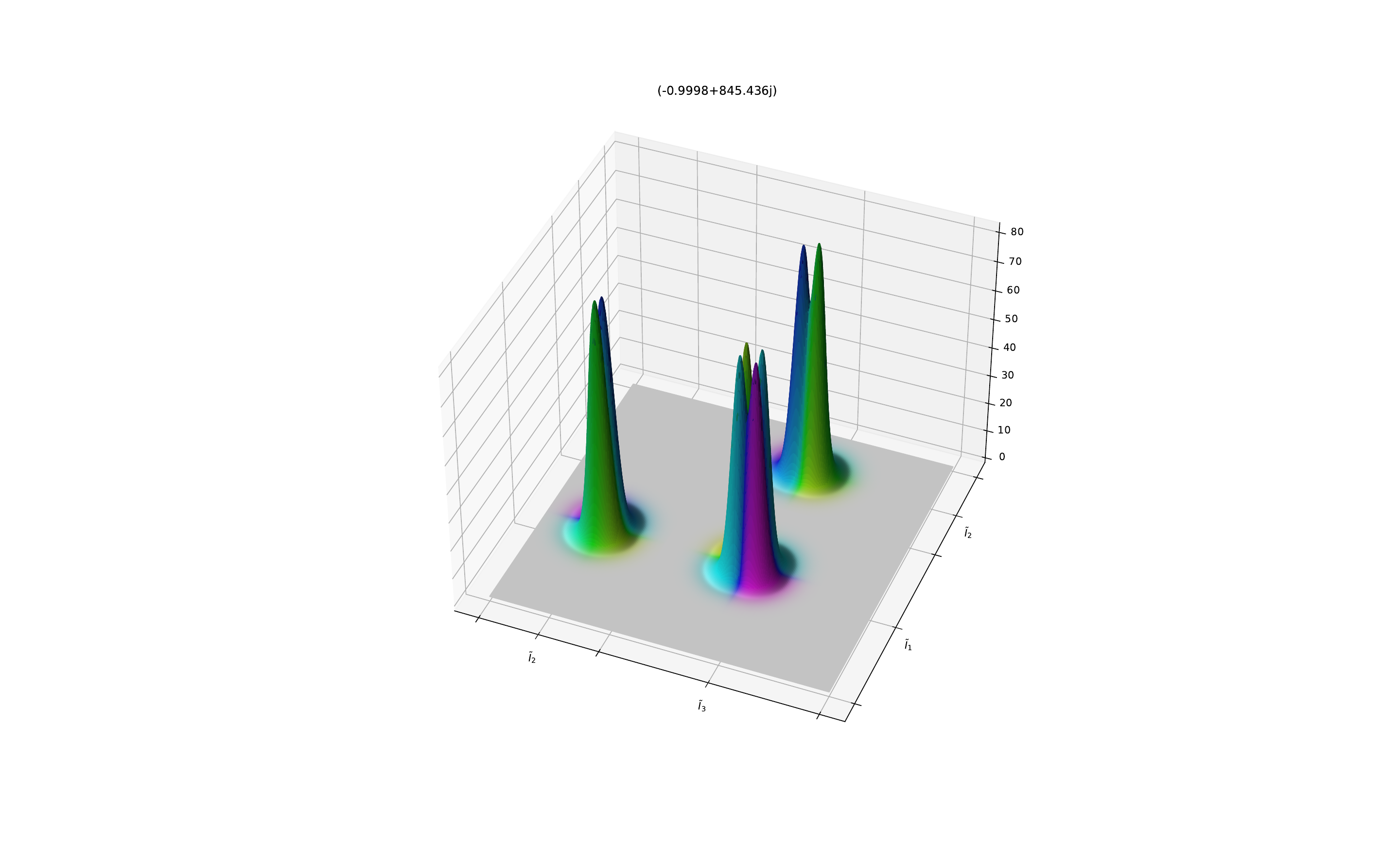}
	}
	\caption{%
		Analogous plot as in Figure~\ref{fig_funnel_reduced_distributions_3d_3} associated with the same resonance
		$\lambda_0 \approx -0.9998 + 845.436\mathrm{i}$ but for a refined Gaussian width of $\sigma = 7\cdot 10^{-3}$.
		Note how both the pairs and the quadruple of peaks are visually more distinct due to the (slight) decrease in $\sigma$.
	}\label{fig_funnel_reduced_distributions_3d_4}
\end{figure}

In summary we observe that the expected limiting behavior where for $\sigma\rightarrow 0$ the numerically
calculated distributions should converge to quantities supported on the (fractal) limit set can indeed be found
in our concrete experiments. We also observed that there are practical limits to the lower bound on $\sigma$
mostly dictated by an inverse relationship between the value of $\sigma$ and the required maximal number of
summands $n_\mathrm{max}$ in the cycle expansion of the dynamical determinant. Here symmetry reduction begins to show its
large potential despite the fact that only a few first experiments were conducted. But even with decently sized values of $\sigma$ the three-dimensional
representations of $t^\Sigma_{\lambda_0, \sigma}$ exhibit a number of interesting features that should provide further
insights into the structure of invariant Ruelle distributions, resonant states, and their associated resonances.

\begin{remark}
	Additional numerical experiments including in particular animations are being prepared and will be included in future
	versions of this article as well as the first author's PhD thesis~\cite{Schuette.phd}.
\end{remark}


\section{Outlook}\label{outlook}

In the present article we derived and implemented an algorithm for the practical numerical computation of invariant Ruelle
distributions. This opens the possibility to perform a large number of different numerical experiments on
which an outlook was already given in the final
Section~\ref{numerics}. More systematic investigations will be conducted in the future to fully leverage the
tools developed here.

Besides such experiments another interesting perspective would be to transfer the methods of~\cite{Pohl.2020} for the
numerical calculation of Fredholm determinants to our (weighted) dynamical determinants $d_f$. These techniques showed very
promising improvements over the computational cost and convergence of cycle expansions in the resonance case and it is expected that
similar improvements are possible with minor adaptations in the case of invariant Ruelle distributions.


\appendix

\section{Fredholm Determinants} \label{app_fredholm_det}

In this appendix we summarize some facts regarding functional analysis in general and Fredholm determinants in particular.
We include this appendix to make the present paper and specifically our proofs more self-contained.
All of the following material is quite standard and the proofs can be found in e.g.~\cite[Appendix~A.4]{Borthwick.2016} or the monograph~\cite[Chap.~1~\&~3]{Simon.2005}.

\begin{defn} \label{def_trace_class}
	Let $T: \mathcal{H}_1 \rightarrow \mathcal{H}_2$ be a compact operator between the Hilbert spaces $\mathcal{H}_1$ and $\mathcal{H}_2$.
	The square roots of the eigenvalues of the self-adjoint operator $T T^*$ are called \emph{singular values} of $T$ and denoted by\footnote{
		Where in this decreasing enumeration multiple eigenvalues are repeated according to their multiplicity.
	}
	\begin{equation*}
	\mu_1(T) \geq \mu_2(T) \geq \hdots \longrightarrow 0 ~.
	\end{equation*}
	If $\mathcal{H}_1 = \mathcal{H}_2$ then $T$ is called \emph{trace-class} in case $\sum_{i = 1}^\infty \mu_i(T) < \infty$ holds.
\end{defn}

For the next definition we denote, analogously to the singular values, the sequence of eigenvalues of $T$ ordered w.r.t. decreasing absolute value and repeated according to their multiplicity by $\lambda_1(T), \lambda_2(T), \hdots$.

\begin{defn} \label{def_abstract_fredholm_determinant}
	Let $T: \mathcal{H} \rightarrow \mathcal{H}$ be a trace-class operator on the Hilbert space $\mathcal{H}$.
	Then the \emph{Fredholm determinant} of $T$ is defined as
	\begin{equation}
	\mathrm{det}\left( \mathrm{id} - zT \right) \defgr \prod_{i = 1}^{\infty} \left( 1 + z\lambda_i(T) \right) ~,
	\end{equation}
	and converges for any $z\in \mathbb{C}$.
\end{defn}

\begin{theorem}
	Let $T: \mathcal{H} \rightarrow \mathcal{H}$ be a compact operator on the Hilbert space $\mathcal{H}$ and denote by $\bigwedge^i T$ the $i$-th exterior power of $T$.
	Then the Fredholm determinant can be calculated via
	\begin{equation*}
	\mathrm{det}\left( \mathrm{id} - zT \right) = \sum_{i = 0}^\infty z^i \cdot \mathrm{Tr}\left( \bigwedge^i T \right) ~,
	\end{equation*}
	where the \emph{trace} of $T$ may be defined as $\mathrm{Tr}(T) \defgr \sum_{i = 1}^\infty \lambda_j(T)$.
\end{theorem}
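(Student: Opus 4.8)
The plan is to derive the series expansion directly from the product formula that \emph{defines} the Fredholm determinant in Definition~\ref{def_abstract_fredholm_determinant}, namely $\det(\mathrm{id} - zT) = \prod_{i\geq 1}(1 + z\lambda_i(T))$. (Here one tacitly assumes $T$ to be trace-class, since otherwise the left-hand side is not defined by that formula; this is the case relevant to us.) First I would record that the infinite product converges absolutely and locally uniformly in $z$: Weyl's inequality gives $\sum_i |\lambda_i(T)| \leq \sum_i \mu_i(T)$, and the latter is finite by Definition~\ref{def_trace_class}. For a truncated product the elementary algebra of symmetric functions yields
\begin{equation*}
\prod_{i=1}^{N}\bigl(1 + z\lambda_i(T)\bigr) = \sum_{k=0}^{N} z^{k}\, e_k\bigl(\lambda_1(T),\dots,\lambda_N(T)\bigr),
\end{equation*}
where $e_k$ denotes the $k$-th elementary symmetric polynomial and $e_0 \equiv 1$.

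The core of the argument is then the identification
\begin{equation*}
\mathrm{Tr}\bigl( \bigwedge\nolimits^{k} T \bigr) \;=\; \sum_{i_1 < \dots < i_k} \lambda_{i_1}(T)\cdots\lambda_{i_k}(T) \;=\; e_k\bigl(\lambda_1(T),\lambda_2(T),\dots\bigr).
\end{equation*}
Two things need checking here. One is that $\bigwedge^{k} T$ is again trace-class: its singular values are the products $\mu_{i_1}(T)\cdots\mu_{i_k}(T)$ over $i_1 < \dots < i_k$, so $\sum_j \mu_j(\bigwedge^{k}T) \leq \tfrac{1}{k!}\bigl(\sum_i \mu_i(T)\bigr)^{k} < \infty$. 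The other, and the step I expect to be the main obstacle, is the eigenvalue statement in the infinite-dimensional setting: in finite dimensions it is immediate from triangularising $T$, but in general it rests on the Ringrose/Schur theory of triangular representations of compact operators, after which one transfers the identity from the finite-rank truncations $T_N = P_N T P_N$ to $T$ using continuity of $\mathrm{Tr}$ and of $\bigwedge^{k}(\cdot)$ with respect to the trace norm; one must be careful that eigenvalues are counted with their algebraic multiplicities throughout. I would either carry this out following~\cite[Chap.~3]{Simon.2005}, or, more economically, quote the Plemelj--Smithies expansion of $\det(\mathrm{id}-zT)$ and recognise its coefficients as $\mathrm{Tr}(\bigwedge^{k}T)$.

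Finally I would pass to the limit $N\to\infty$. Since $|e_k(\lambda_1(T),\lambda_2(T),\dots)| \leq e_k(|\lambda_1(T)|,|\lambda_2(T)|,\dots) \leq \tfrac{1}{k!}\bigl(\sum_i|\lambda_i(T)|\bigr)^{k}$, the series $\sum_k z^{k}\,\mathrm{Tr}(\bigwedge^{k}T)$ converges absolutely and locally uniformly, with $\sum_k |z|^{k}\,\bigl|\mathrm{Tr}(\bigwedge^{k}T)\bigr| \leq \exp\bigl(|z|\sum_i|\lambda_i(T)|\bigr)$. Combined with the uniform convergence of the truncated products to $\det(\mathrm{id}-zT)$, this permits interchanging the two limiting processes and concluding
\begin{equation*}
\det(\mathrm{id} - zT) \;=\; \lim_{N\to\infty}\sum_{k=0}^{N} z^{k}\, e_k\bigl(\lambda_1(T),\dots,\lambda_N(T)\bigr) \;=\; \sum_{k=0}^{\infty} z^{k}\,\mathrm{Tr}\bigl(\bigwedge\nolimits^{k} T\bigr),
\end{equation*}
which is the asserted formula; the case $k=1$ recovers $\mathrm{Tr}(T) = \sum_j \lambda_j(T)$.
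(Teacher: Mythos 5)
Your proposal is correct. Note that the paper does not actually prove this theorem: the appendix explicitly defers its proof (and the other listed facts) to the cited references (Borthwick's appendix and Simon's trace ideals monograph), so there is no proof in the paper to compare against. Your argument is precisely the standard one found in those references, starting from the paper's definition of $\det(\mathrm{id}-zT)$ as the product $\prod_{i}(1+z\lambda_i(T))$, rewriting finite truncations via elementary symmetric polynomials, identifying $e_k(\lambda(T))=\mathrm{Tr}(\bigwedge^{k}T)$, and passing to the limit with the dominated-convergence bound $|e_k|\leq \tfrac{1}{k!}\bigl(\sum_i|\lambda_i(T)|\bigr)^{k}$. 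You also correctly isolate the one genuinely nontrivial ingredient, namely that the eigenvalues of $\bigwedge^{k}T$ with algebraic multiplicities are the products $\lambda_{i_1}(T)\cdots\lambda_{i_k}(T)$ over increasing index tuples, which in infinite dimensions rests on the Ringrose/Schur upper-triangular form for compact operators (or, equivalently, on the Plemelj--Smithies coefficient formulas). Deferring that step to the literature is reasonable and mirrors what the paper itself does.
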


Combining this theorem with the inequality
\begin{equation*}
\sum_{j = 1}^\infty \vert \lambda_j(T)\vert \leq \sum_{j = 1}^\infty \mu_j(T)
\end{equation*}
one can now prove the analyticity of the Fredholm determinant in the $z$-variable:
The trace which is its $i$-th coefficient in the Taylor expansion about $z = 0$ can be bounded by
\begin{equation*}
\sum_{j_1 < \hdots < j_i} \mu_{j_1}(T) \cdot \hdots \cdot \mu_{j_i}(T) \leq \frac{1}{k!} \left( \sum_{j = 1}^\infty \mu_j(T) \right)^i ~,
\end{equation*}
because $\left( \bigwedge^i T \right) \left( \bigwedge^i T \right)^* = \bigwedge^i \left( T T^* \right)$.
But this already finishes the proof due to $T$ being trace-class.

We end this appendix by stating two well-known estimates regarding singular values:
\begin{theorem}[Min-Max-Theorem] \label{thm_min_max}
	Let $T: \mathcal{H} \rightarrow \mathcal{H}$ be a compact operator on the Hilbert space $\mathcal{H}$.
	Then the $n$-th singular value of $T$ satisfies
	\begin{equation*}
	\mu_n(T) = \min_{\substack{V\subseteq \mathcal{H} \\ \dim(V) = n - 1}} \max_{\varphi\in V^\bot} \frac{\Arrowvert T\varphi \Arrowvert}{\Arrowvert \varphi \Arrowvert} ~.
	\end{equation*}
\end{theorem}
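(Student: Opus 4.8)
The plan is to reduce the statement to the classical Courant--Fischer characterization of the eigenvalues of a compact, self-adjoint operator. First I would record that, by Definition~\ref{def_trace_class}, $\mu_n(T)^2 = \lambda_n(TT^*)$, and that since $TT^*$ and $A \defgr T^*T$ share the same nonzero eigenvalues with equal multiplicities, their decreasing enumerations agree, so $\mu_n(T)^2 = \lambda_n(A)$. The operator $A$ is compact, self-adjoint and non-negative, so the spectral theorem furnishes an orthonormal system $\{e_j\}_{j\ge 1}$ of eigenvectors, $Ae_j = \lambda_j(A)\,e_j$, which is an orthonormal basis of $(\ker A)^\perp$. Because $\langle A\varphi,\varphi\rangle = \langle T^*T\varphi,\varphi\rangle = \Arrowvert T\varphi\Arrowvert^2$, it suffices to prove
\[
\lambda_n(A) = \min_{\substack{V\subseteq\mathcal{H}\\ \dim V = n-1}}\ \max_{\varphi\in V^\perp}\ \frac{\langle A\varphi,\varphi\rangle}{\Arrowvert\varphi\Arrowvert^2} ~,
\]
after which the asserted identity follows by taking square roots, using that $x\mapsto\sqrt{x}$ is monotone increasing and hence commutes with both the inner $\max$ and the outer $\min$.

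For the inequality ``$\le$'' I would test the right-hand side against the specific $(n-1)$-dimensional subspace $V_0 \defgr \operatorname{span}\{e_1,\dots,e_{n-1}\}$: any $\varphi\in V_0^\perp$ decomposes as $\varphi = \varphi_0 + \sum_{j\ge n}c_j e_j$ with $\varphi_0\in\ker A$, whence $\langle A\varphi,\varphi\rangle = \sum_{j\ge n}\lambda_j(A)\lvert c_j\rvert^2 \le \lambda_n(A)\,\Arrowvert\varphi\Arrowvert^2$, so the inner maximum over $V_0^\perp$ is at most $\lambda_n(A)$. For the inequality ``$\ge$'' I would fix an arbitrary $(n-1)$-dimensional $V$; since $V^\perp$ has codimension $n-1$ in $\mathcal{H}$, it meets the $n$-dimensional space $W\defgr\operatorname{span}\{e_1,\dots,e_n\}$ in a subspace of dimension at least one, so there is a unit vector $\psi = \sum_{j=1}^n c_j e_j \in W\cap V^\perp$ with $\langle A\psi,\psi\rangle = \sum_{j=1}^n\lambda_j(A)\lvert c_j\rvert^2 \ge \lambda_n(A)$. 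Hence the inner maximum over $V^\perp$ is $\ge\lambda_n(A)$ for every such $V$, and the two inequalities together yield the displayed identity.

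The only point requiring care in the infinite-dimensional setting — and I would spell it out in one sentence — is the legitimacy of writing $\max$ rather than $\sup$ for the inner quantity: for $\varphi\in V^\perp$ one has $\langle A\varphi,\varphi\rangle = \langle \Pi A\Pi\varphi,\varphi\rangle$ with $\Pi$ the orthogonal projection onto $V^\perp$, and $\Pi A\Pi$ restricted to $V^\perp$ is again compact, self-adjoint and non-negative, so its Rayleigh quotient attains its maximum at a top eigenvector. Everything else is elementary; the only bookkeeping is keeping track of the possibly nontrivial kernel of $A$ in the ``$\le$'' step and the codimension count guaranteeing $W\cap V^\perp\neq\{0\}$ in the ``$\ge$'' step, and neither of these is a genuine obstacle.
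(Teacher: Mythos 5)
The paper itself does not prove this theorem: it is stated without proof in Appendix~\ref{app_fredholm_det} and deferred to the cited references (Borthwick, App.~A.4; Simon, Chap.~1~\&~3). Your Courant--Fischer reduction through $A = T^*T$ is precisely the standard argument those references give, and it is correct: the passage from $TT^*$ to $T^*T$ via equality of the decreasing eigenvalue enumerations, the test subspace $V_0 = \operatorname{span}\{e_1,\dots,e_{n-1}\}$ for the upper bound, the dimension-count $\dim(W\cap V^\perp)\ge 1$ for the lower bound, and the remark that $\Pi A\Pi|_{V^\perp}$ is compact self-adjoint so the inner supremum is attained, are all in order. The only bookkeeping you implicitly rely on — and might spell out for completeness — is that when $A$ has rank $m < n$ one pads $\{e_1,\dots,e_m\}$ with an orthonormal set from $\ker A$ to form $V_0$ (the $\ge$ direction is then vacuous since $\lambda_n(A)=0$); you flag exactly this kind of issue yourself, so the proof as written is essentially complete.
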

\begin{theorem}[Fan Inequality]
	Let compact operators $S, T: \mathcal{H} \rightarrow \mathcal{H}$ on the Hilbert space $\mathcal{H}$ be given.
	Then the singular values of $S + T$ satisfy
	\begin{equation*}
	\mu_{i + j - 1}(S + T) \leq \mu_i(S) + \mu_j(T) ~.
	\end{equation*}
\end{theorem}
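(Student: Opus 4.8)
The plan is to derive the Fan inequality entirely from the Min--Max characterization of singular values (Theorem~\ref{thm_min_max}), which is the only ingredient really needed.

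First I would fix subspaces realizing the minima in the Min--Max formula for $S$ and $T$ separately. Since $S$ and $T$ are compact these minima are attained: taking $V_S$ to be the span of the first $i-1$ right-singular vectors of $S$ (i.e.~of eigenvectors of $S^*S$ for its $i-1$ largest eigenvalues) one gets
\[
\mu_i(S) = \max_{0 \neq \varphi \in V_S^\perp} \frac{\Arrowvert S\varphi\Arrowvert}{\Arrowvert \varphi\Arrowvert},
\]
and analogously there is a subspace $V_T$ with $\dim V_T = j-1$ and $\mu_j(T) = \max_{0 \neq \varphi \in V_T^\perp} \Arrowvert T\varphi\Arrowvert / \Arrowvert \varphi\Arrowvert$.

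Next I would set $W \defgr V_S + V_T$, note $\dim W \leq (i-1)+(j-1) = i+j-2$, and, if this inequality is strict, enlarge $W$ to a subspace $U \supseteq W$ with $\dim U = i+j-2$; this is possible because the Hilbert spaces in our applications are infinite-dimensional (in the finite-dimensional edge case one uses the convention that a maximum over the zero subspace is $0$, making the asserted bound trivial). Since $U \supseteq V_S$ and $U \supseteq V_T$, we have $U^\perp \subseteq V_S^\perp \cap V_T^\perp$, so for every $\varphi \in U^\perp$ the triangle inequality in $\mathcal{H}$ together with the two maxima above yields
\[
\Arrowvert (S+T)\varphi\Arrowvert \leq \Arrowvert S\varphi\Arrowvert + \Arrowvert T\varphi\Arrowvert \leq \bigl( \mu_i(S) + \mu_j(T) \bigr) \Arrowvert \varphi\Arrowvert.
\]

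Finally I would feed $U$ into the Min--Max formula for $S+T$ at index $i+j-1$: since $\dim U = (i+j-1)-1$, the minimum defining $\mu_{i+j-1}(S+T)$ is bounded above by the maximum of $\Arrowvert (S+T)\varphi\Arrowvert / \Arrowvert \varphi\Arrowvert$ over $\varphi \in U^\perp$, which by the last display is at most $\mu_i(S) + \mu_j(T)$; this is precisely the claim. I do not expect a genuine obstacle here. The only points requiring a little care are the attainment of the minima in Min--Max — handled by passing to singular vectors, which is standard for compact operators — and the dimension bookkeeping when $\dim(V_S+V_T) < i+j-2$, handled by enlarging the subspace, which only shrinks $U^\perp$ and hence only decreases the relevant maximum. (An alternative, equally short route would use the characterization of $\mu_{k+1}$ as the distance in operator norm to the operators of rank $\le k$, applied to $F_S+F_T$ with $F_S,F_T$ optimal low-rank approximants of $S,T$; I would prefer the Min--Max argument since that is the tool already set up in Appendix~\ref{app_fredholm_det}.)
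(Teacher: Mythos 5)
Your proof is correct. Note, however, that the paper does not actually prove the Fan inequality: it is stated in Appendix~\ref{app_fredholm_det} as a standard fact, with the reader pointed to Borthwick's book and Simon's monograph for proofs, so there is no in-paper argument to compare against. The derivation you give from the Min--Max characterization is the standard textbook argument and is complete: you take subspaces $V_S$, $V_T$ of dimensions $i-1$, $j-1$ attaining the minima for $\mu_i(S)$ and $\mu_j(T)$ (attainment holds for compact operators via the spans of singular vectors), enlarge $V_S + V_T$ to a subspace $U$ of dimension exactly $i+j-2$ (which only shrinks $U^\perp$ and hence can only help), apply the triangle inequality on $U^\perp \subseteq V_S^\perp \cap V_T^\perp$, and then feed $U$ into the Min--Max formula for $\mu_{i+j-1}(S+T)$. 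The two small points you flag are exactly the ones needing care and both are handled correctly.
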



\bibliographystyle{amsalpha}
\bibliography{aux/bibo}

\bigskip


\end{document}